\newtheorem{assumption}{Assumption}
\newtheorem{remark}{Remark}
\newtheorem{theorem}{Theorem}
\newtheorem{lemma}{Lemma}
\newtheorem{proof}{Proof}
\begin{document}

	\title{Locking-free HDG methods for Reissner–Mindlin plates equations on polygonal meshes}
\date{}
\author{
	Gang Chen\thanks{School of Mathematics, Sichuan University, Chengdu, China,  
		email:cglwdm@scu.edu.cn}, 
	Lu Zhang\thanks{School of Mathematics, Sichuan University, Chengdu, China, 
		email:2021222010005@stu.scu.edu.cn}, 
	Shangyou Zhang\thanks{Department of Mathematics Science, University of Delaware, Newark, USA, email:szhang@udel.edu}
	}
\maketitle
\begin{abstract} 
	We present and analyze a new hybridizable discontinuous Galerkin method (HDG) for the Reissner–Mindlin plate bending system. Our method is based on the formulation utilizing Helmholtz Decomposition. Then the system is decomposed into three problems: two trivial Poisson problems and a perturbed saddle-point problem. We apply HDG scheme for these three problems fully. This scheme yields the optimal convergence rate ($(k+1)$th order in the $\mathrm{L}^2$ norm) which is uniform with respect to plate thickness (locking-free) on general meshes.  We further analyze the matrix properties and precondition the new finite element system. Numerical experiments are presented to confirm our theoretical analysis.

keywords: Reissner–Mindlin plate, hybridizable discontinuous Galerkin method, error analysis, locking-free 
\end{abstract}

	\section{Introduction}
In this paper we present a hybridizable discontinuous Glaerkin method (HDG) for Reissner–Mindlin (RM) plate bending system. The equations of RM can be written as the following equations:
\begin{subequations}
	\begin{align}
		&&	-\nabla\cdot\mathcal C\bm{\epsilon}(\bm\theta)
		-\lambda t^{-2}(\nabla \omega-\bm\theta)&=\bm f,&\text{in }\Omega, \\
		&&	-\lambda t^{-2}\nabla\cdot(\nabla \omega-\bm \theta)&=g,&\text{in }\Omega,
	\end{align}
	with the hard clamped boundary condition
	\begin{equation}
		\bm\theta=\bm 0,\quad \omega=0, \qquad \text{on }\partial\Omega.
	\end{equation}
\end{subequations}
Here $t$ denotes the plate thickness, the unknowns $w$ and $\bm\theta$ denote the transverse displacement of the midplane and the rotation of the fibers normal to it. The functions $\bm f\in[\mathrm{L}^2(\Omega)]^2$ and $g\in\mathrm{L}^2(\partial\Omega)$ denote a body force and the transverse loading acting on a polygonal region $\Omega\in\mathbb{R}^2$  with boundary $\partial \Omega$, respectively. The symmetric gradient $\bm{\epsilon}(\bm\phi):= \frac{1}{2}(\nabla \bm\phi +(\nabla\bm\phi)^T)$ which takes values in the space $\mathbb{S}=\mathbb{R}_{\rm sym}^{2\times2}$. The operator $\mathcal C: \mathbb{S}\rightarrow\mathbb{S}$ is the positive definite plane stress constitutive tensor as follows
\begin{align}\nonumber
	\mathcal C\bm\tau =\frac{E}{12(1-\nu^2)}
	[(1-\nu)\bm\tau+\nu{\rm tr}(\bm\tau)\bm I],
\end{align}
with $\lambda=\frac{\kappa E}{2(1+\nu)}$ and $\bm\tau \in \mathbb{S}$, the Young’s modulus $E>0$, the Poisson's ratio $\nu\in(0,\frac{1}{2}]$ and the shear correction factor $\kappa$ is usually chosen as $\frac 5 6$. More details about RM can be found in \cite{Falk2008}. A simple calculation shows that
\begin{align}\label{key}
	\frac{12(1-\nu)}{E}\|\bm \tau\|_0^2\leq(\mathcal{C}^{-1}\bm\tau,\bm\tau)\leq\frac{12(1+\nu)}{E}\|\bm \tau\|_0^2,
\end{align}
holds true for any $\bm\tau\in\mathbb{S}$.\par
Standard low order conforming finite element methods are known to exhibit shear locking and yield poor results for small thickness $t\rightarrow 0$ (see \cite{Arnold1981}). For dealing with shear locking phenomenon, it is useful to introduce shear stress $\gamma:= \lambda t^{-2}(\nabla \omega-\theta)$ with corresponding finite dimensional space $\bm\Gamma_h$. Then many successful conforming standard finite element methods introduce interpolation operator $\bm\Pi: \bm H_0^1(\Omega)\rightarrow \bm\Gamma_h$ to make $\nabla \omega-\bm\Pi\bm\theta\rightarrow 0$  as $t\rightarrow 0$, for triangular elements, see \cite{Duran,Brezzi1,Brezzi2,Brezzi3,Bathe,Falk,Zhongnian,yu2017analysis,gallistl2021taylor,HANSBO2011638}, for rectangular elements, see \cite{Bathe2, Brezzi1,Stenberg,Hansbo_2014} and for quadrilateral elements, see \cite{arnold2002approximation,CARSTENSEN20111161}. Also, many researchers develop a number of other methods. A nonconforming finite element for $\omega$ is given by equivalent formulation of RM using the discrete Helmholtz decomposition and the MINI element for stokes problem, see \cite{duran1991inf,arnold1989uniformly}. Low-order nonconforming elements, both for $\bm\theta$ and $\omega$, obtain the desire stability by adding some inter-element term, see \cite{chinosi2006nonconforming,lovadina2005low} for details. Jun Hu and Zhong-Ci Shi generalize the rectangular nonconforming Wilson element method dropping the requirement of $H_3(\Omega)$ regularity on the solution, see \cite{HU2008464}. Stenberg \cite{stenberg1995new} and Hughes \cite{hughes1988mixed} applied least-squares stabilization schemes. J. Kiendl et al. \cite{KIENDL2015489} develop two collocation schemes based on the equilibrium equations of the Reissner–Mindlin plate in a primal formulation. G. Kikis et al. \cite{KIKIS2021113490} implement the anisotropic phase-field model of brittle fracture to the isogeometric Reissner–Mindlin shell formulation.\par
A lot of literature show that the numerical trace or numerical flux is crucial to obtain stable numerical solution. So the discontinuous Galerkin method (DG) appears naturally in \cite{doi:10.1137/0710071}, called the interior penalty at first and has been applied in solving various problems, see\cite{doi:10.1137/S0036142901384162} for elliptic problems and \cite{ARNOLD20073660,Arnold2005AFO} for RM problems. DG is not required to satisfy the standard inter-element continuity condition like conforming finite element method, but usually adds the penalty term of piecewise numerical flux. So DG applies flexibly on general polyonal meshes. However, based on above merit, DG needs more computability and memory with adding the extra degrees of freedom especially for high-degree polynomial approximations. Under the framework of DG, many other methods for RM have developed. Daniele and Pietro \cite{DIPIETRO2022136} use an enrichment of the two-dimensional discrete de Rham space with edge unknowns for discretization recently. F\"{u}hrer and Heuer \cite{10.48550/1906.04869, doi:10.1137/22M1498838} apply discontinuous Petrov–Galerkin method for RM with numerical flux and the numerical trace as unknowns, which is the main difference from HDG with only numerical trace being unknown.\par
The method presented in this paper namely HDG is based on local discontinuous Galerkin method (LDG), see \cite{doi:10.1137/S0036142900371003}. Therein LDG firstly introduce an auxiliary variable to convert a high-order system into a system of first-order equations, then defining the numerical trace for all unknowns. The important point is that the auxiliary variable can be solvable locally because the prescribed numerical trace of primal unknown does not depend on the auxiliary variable. In this sense, LDG decreases the computational expense compared with DG (assuming local system could be paralyzingly solved) and retains the flexibility. Soon combining with other continuous hybridized methods, HDG was introduced by\cite{doi:10.1137/070706616}, directly called LDG-H therein. Corollary 3.2 of \cite{doi:10.1137/070706616} pointed out HDG is not identical with LDG because the numerical trace, as a new introduced unknown, depends on the numerical flux. The new numerical trace leads to a small and better system which involves only numerical trace as unknowns. This is much more efficient for high-degree polynomials, see \cite{10.1007/s10915-011-9501-7}. Over the past decade, the application of HDG extends more and more widely, in particular for RM in \cite{Chen1}. \par
In this paper, we continue to extend the work of HDG to RM plates problems on polygonal meshes by utilizing a Helmholtz decomposition. Then the system is decomposed into three problems: two trivial Poisson problems and a perturbed saddle-point problem. The main analysis is in the perturbed saddle-point problem. The method retains the flexibility of DG and we can analyze the error  locally. We show the new HDG method is locking-free and produces optimal-order solutions. \par
The paper is organized as follows. Section 2 restates some necessary notations and preliminary results. Sections 3 and 4 include our method and the main result. In these two sections, we fully carry out the analysis and get an optimal convergence rate for the norm defined by discrete formulation of our method, and in $\mathrm{L}^2$ norm, for arbitrary polynomial degree (greater than zero). Section 5 analyzes the properties of matrix from our method. The main improvement from \cite{Chen1} is embodied here. We end in Section 6 by presenting several numerical experiments, verifying our theoretical results.\par
\section{Notation and Preliminary Results}
\subsection{Notation}
To introduce the fully discrete HDG formulation for the RM plates model, we firstly fix some notation. Let $\Omega,T$ be the bounded simply connected Lipschitz domain in $\mathbb{R}^s$ where $s=1,2$. We shall use the usual Sobolev spaces such as $H^m(T)$ and $H^m_0(T)$ to denote the 
$m$-th order Sobolev spaces on $T$, and use $\|\cdot\|_{m,T}, |\cdot|_{m,T}$ to denote the norm and semi-norm on these spaces. We use $(\cdot,\cdot)$ to denote inner product on $H^m(T)$. When $T=\Omega$, for abbreviation, we denote $\|\cdot\|_{m}:=\|\cdot\|_{m,T}$ and $|\cdot|_{m}:=|\cdot|_{m,T}$.
In particular, when $s=1$ (in 1D), we use $\langle\cdot,\cdot\rangle$ to replace $(\cdot,\cdot)$. By convention, we use
boldface type for vector (or tensor) analogues of the spaces along with the vector-valued functions. For an integer $k\geq 0$, $P_k(T)$ denotes the set of all polynomials defined on $T$ with degree at most $K$. And for clarity, we reintroduce some operators $\nabla,\nabla^{\perp}$ with scalar variable $\omega$
$$\nabla\omega=(\partial_x\omega,\partial_y\omega),\qquad \nabla^{\perp}\omega=(\partial_y\omega,-\partial_x\omega),$$
and $\nabla\cdot, \nabla\times$ with vector variable $\bm \phi=(\phi_1,\phi_2)$
$$\nabla\cdot\bm\phi=\partial_x\phi_1+\partial_y\phi_2,\qquad\nabla\times\bm\phi=\partial_x\phi_2-\partial_y\phi_1.$$\par
Let $\mathcal{T}_h=\cup{K}$ be a shape regular partition of the domain $\Omega$ (in sense of the Lemma 2.1 of \cite{Duran2008}) with respect to a union of a finite (and uniformly bounded) number of star-shaped elements. For any $K \in \mathcal{T}_h$, we let $h_K$ be the minimum diameters of circles containing $K$ and denote the mesh size $h:=\max_{K \in \mathcal{T}_h}h_K$. The element $K$ is star-shaped if and only if $\exists \bm x_0 \in K, \forall \bm x \in K$, the line segment from $\bm x$ to $\bm x_0$ lies in $K$. Let $\partial\mathcal{T}_h=\cup{F}$ be the union of all edges $F$ of $K\in \mathcal{T}_h$. we denote $h_F$ the length of edge F and the edge $F$ of $K$ satisfies that $h_F$ is greater than or equal to a uniformly constant multiply $h_K$. For all $K \in \mathcal{T}_h$, we denote by $\bm n$ the unit outward normal vectors of $K$. Furthermore, we introduce the discrete inner products
$$(\mu,v)_{\mathcal{T}_h}:=\sum_{K\in\mathcal{T}_h}(\mu,v)_{K} =\sum_{K\in\mathcal{T}_h}\int_K \mu v\ \mathrm{d}\bm x,\quad\langle\zeta,\rho\rangle_{\partial\mathcal{T}_h}:=\sum_{K\in\mathcal{T}_h}\langle\zeta,\rho\rangle_{\partial K} =\sum_{K\in\partial\mathcal{T}_h}\int_K \zeta \rho\ \mathrm{d}\bm s.$$

For any element $K\in\mathcal{T}_h$, $F \in \partial\mathcal{T}_h$ and any non-negative integer $j$. Let $ \Pi_j^o: \mathrm{L}^2(K)\rightarrow \mathcal{P}_j(K)$ and $ \Pi_j^{\partial}: \mathrm{L}^2(F)\rightarrow \mathcal{P}_j(F)$ be the usual $\mathrm{L}^2$ projection corresponding to two-dimensional projection $\bm\Pi_j^o: \bm{\mathrm{L}}^2(K)\rightarrow [\mathcal{P}_j(K)]^2$ and $\bm\Pi_j^{\partial}: \bm{\mathrm{L}}^2(F)\rightarrow [\mathcal{P}_j(F)]^2$. Then we can get the following approximation and boundness result \eqref{L2projectionapproximation}, therein \eqref{1} and \eqref{2} are direct results of ${\rm L}^2$ projection definition, \eqref{3} holds by \eqref{6} which holds by Theorem 3.22 of \cite{200359} and Corollary 2.1.of \cite{Duran2008}, \eqref{5} holds by Lemma 3.3 of \cite{CHEN2018643} and \eqref{4} holds by Lemma 2.3 of \cite{Duran2008}.\par
It's important to note that we use same notation $C$ throughout this paper to denote different positive constants at every occurrence which is independent of $t,h$.
\begin{lemma}
	Let $m$ be an integer with $1\leq m \leq j+1$, For all $K \in \mathcal{T}_h$, $F$is arbitrary edge of $K$, it holds
	\begin{subequations}\label{L2projectionapproximation}
		\begin{align}
			\label{1}\|\Pi_j^o v\|_{0,K}&\leq \|v\|_{0,K}  \qquad &\forall v \in \mathrm{L}^2(K), \\
			\label{2}\|\Pi_j^{\partial} v\|_{F}&\leq \|v\|_F  \qquad &\forall v \in \mathrm{L}^2(F), \\
			\label{3}\|v-\Pi_j^{\partial}v\|_{\partial K}&\leq C h^{m-1/2}_{K}|v|_{m,K}   &\qquad \forall v \in H^m(K), \\
			\label{4}|v|_{1,K}&\leq C h^{-1}_{K}|v|_{0,K}  \qquad &\forall v \in \mathcal P_j(K), \\
			\label{5}\|\nabla^s(v-\Pi_j^{o}v)\|_{K}&\leq C h^{m-s}_{K}|v|_{m,K}   \qquad& \forall v \in H^m(K), 1\leq s+1\leq m, \\
			\label{6}|v|_{\partial K}&\leq C h^{-1/2}_{K}|v|_{0,K}   \qquad &\forall v \in H^m(K).
		\end{align}
	\end{subequations}
\end{lemma}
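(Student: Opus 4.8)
The plan is to establish each of the six estimates in \eqref{L2projectionapproximation} by treating the two "definitional" items \eqref{1}--\eqref{2} directly, and then obtaining \eqref{3}--\eqref{6} by pulling the already-cited reference results onto the star-shaped elements of the partition. For \eqref{1} and \eqref{2}, I would simply invoke the orthogonality characterization of the $\mathrm{L}^2$ projection: writing $v = \Pi_j^o v + (v - \Pi_j^o v)$ with the two summands orthogonal in $\mathrm{L}^2(K)$ gives $\|\Pi_j^o v\|_{0,K}^2 + \|v-\Pi_j^o v\|_{0,K}^2 = \|v\|_{0,K}^2$, so $\|\Pi_j^o v\|_{0,K} \le \|v\|_{0,K}$; the edge case \eqref{2} is identical with $K$ replaced by $F$ and the two-dimensional vector versions follow componentwise. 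These require no geometric hypotheses beyond the measurability of $K$ and $F$.

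For the approximation and inverse bounds I would argue element by element, using that $\mathcal{T}_h$ is shape regular in the sense of Lemma 2.1 of \cite{Duran2008}, i.e.\ each $K$ is a union of a uniformly bounded number of star-shaped subdomains with comparable diameters, and that each edge $F$ of $K$ satisfies $h_F \ge c\,h_K$. First I would record the trace inverse inequality \eqref{6}: on a star-shaped domain of diameter $h_K$ the scaled trace theorem (Theorem 3.22 of \cite{200359} combined with Corollary 2.1 of \cite{Duran2008}) yields $\|v\|_{\partial K} \le C\,h_K^{-1/2}\|v\|_{0,K}$ for $v \in \mathcal{P}_j(K)$, with $C$ depending only on the shape-regularity constants; summing over the bounded number of star-shaped pieces keeps the constant uniform. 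Then \eqref{3} follows by the standard splitting $v - \Pi_j^\partial v = (v - \Pi_j^o v) + \Pi_j^\partial(\Pi_j^o v - v)$ on $\partial K$: apply \eqref{6} (in the form valid for $H^m$, via \cite{200359}) to the first term, the edge stability \eqref{2} and again the trace bound to the second, and close with the Bramble--Hilbert/Deny--Lions estimate on star-shaped domains from Corollary 2.1 of \cite{Duran2008}, giving $\|v - \Pi_j^o v\|_{0,K} \le C\,h_K^{m}|v|_{m,K}$ and $\|\nabla(v-\Pi_j^o v)\|_{0,K} \le C h_K^{m-1}|v|_{m,K}$, which after the trace scaling produce $h_K^{m-1/2}|v|_{m,K}$. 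The inverse estimate \eqref{4} is Lemma 2.3 of \cite{Duran2008} applied on each star-shaped piece and summed; and \eqref{5} is exactly Lemma 3.3 of \cite{CHEN2018643}, reproduced here for $1 \le s+1 \le m$ by the same Bramble--Hilbert argument applied to the derivatives of $v - \Pi_j^o v$.

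The main obstacle is not any single computation but rather the bookkeeping of constants across the decomposition of $K$ into star-shaped subdomains: one must check that the Bramble--Hilbert constant, the trace constant, and the polynomial inverse constant all depend only on the uniform bound for the number of pieces and on their diameter ratios, not on $K$ itself or on $h$. Concretely, the subtle point in \eqref{3} is that $\Pi_j^\partial$ is defined edge-by-edge while $\Pi_j^o$ is defined element-by-element, so one cannot directly compare them; the triangle-inequality splitting above routes through the full trace $\partial K$ and relies on $h_F \simeq h_K$ to convert edgewise norms to the $\partial K$ norm without losing uniformity. Once these constant-tracking lemmas from \cite{Duran2008}, \cite{200359}, and \cite{CHEN2018643} are in place, every inequality in \eqref{L2projectionapproximation} follows by the elementary manipulations sketched above, and the constant $C$ is, as promised, independent of both $t$ and $h$.
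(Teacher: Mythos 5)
Your proposal follows essentially the same route as the paper, which proves this lemma purely by citation: \eqref{1}--\eqref{2} from the definition (orthogonality) of the $\mathrm{L}^2$ projection, \eqref{6} from the scaled trace result (Theorem 3.22 of the Sobolev-space reference together with Corollary 2.1 of Dur\'an), \eqref{3} deduced from \eqref{6} combined with the projection approximation bounds, \eqref{4} from Lemma 2.3 of Dur\'an, and \eqref{5} from Lemma 3.3 of Chen et al., with the constants uniform by shape regularity of the star-shaped decomposition. Your added detail (the triangle-inequality splitting for \eqref{3} and the constant bookkeeping across star-shaped pieces) is consistent with, and merely expands on, the paper's citation-level argument.
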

\subsection{Helmholtz Decomposition}
Let us introduce a new unknown, so-called the shear stress. It is a useful tool for many methods:
$$
\bm\gamma:= \lambda t^{-2}(\nabla \omega-\bm\theta).
$$
Using Helmholtz decomposition, see \cite{Falk2008}, wherein $\widehat{H}^m(\Omega):=H^m(\Omega)/\mathbb{R}$, $$\bm {\mathrm{L}}^2(\Omega):=[\mathrm{L}^2(\Omega)]^2=\nabla H_0^1(\Omega) \oplus \nabla^\perp\widehat{H}^1(\Omega), $$
and  suppose $\bm\gamma$ is decomposed as $\bm\gamma=\nabla r+\nabla^\perp p$, we get the following weak formulation:\\
find $(r, \boldsymbol{\theta}, \boldsymbol{\sigma}, p, \omega) \in H_0^1(\Omega) \times\left[H_0^1(\Omega)\right]^2 \times\left(\left[\mathrm{L}^2(\Omega)\right]^{2 \times 2} \cap \mathbb{S}\right) \times\widehat{H}^1(\Omega) \times H_0^1(\Omega)$ such that
\begin{align*}
	(\nabla r, \nabla \mu) &=(g, \mu), &\forall \mu \in H_0^1(\Omega), \\
	(\mathcal{C}^{-1} \boldsymbol{\sigma}, \boldsymbol{\tau})-(\nabla \boldsymbol{\theta}, \boldsymbol{\tau}) &=0,&\forall \boldsymbol{\tau} \in\left[\mathrm{L}^2(\Omega)\right]^{2 \times 2} \cap \mathbb{S}, \\
	(\boldsymbol{\sigma}, \nabla \phi)-(\boldsymbol{\phi}, \nabla^{\perp} p) &=(\nabla r, \boldsymbol{\phi})+(\boldsymbol{f},\boldsymbol{\phi}), &\forall \boldsymbol{\phi} \in\left[H_0^1(\Omega)\right]^2, \\
	-(\boldsymbol{\theta},\nabla^{\perp} q)-\lambda^{-1} t^2(\nabla^{\perp} p,\nabla^{\perp} q) &=0,&\forall q \in H^1(\Omega) ,\\
	(\nabla \omega,\nabla s) &=(\boldsymbol{\theta},\nabla s)+\lambda^{-1}t^2(g,s),&\forall s \in H_0^1(\Omega) . 
\end{align*}\par
\subsection{Continuous Form}
We introduce $\bm L=\nabla r$ and $\bm G=\nabla \omega$ and $\bm R=\lambda^{-1}t^2\nabla^{\perp} p$, then we rewrite the above formulation as
\begin{subequations}\label{continuous form}
	\begin{align}
		(\bm L,\bm M)-(\nabla r,\bm M)&=0,&\forall \bm M \in \bm{\mathrm{L}}^2(\Omega), \\
		(\bm L, \nabla \mu) &=(g, \mu), &\forall \mu \in H_0^1(\Omega), \\
		(\mathcal{C}^{-1} \boldsymbol{\sigma}, \boldsymbol{\tau})-(\nabla \boldsymbol{\theta}, \boldsymbol{\tau}) &=0, &\forall \boldsymbol{\tau} \in\left[\mathrm{L}^2(\Omega)\right]^{2 \times 2} \cap \mathbb{S}, \\
		(\lambda t^{-2} \bm R,\bm S)
		-(\nabla^{\perp}p,\bm S)&=0,&\forall \bm S \in \bm{\mathrm{L}}^2(\Omega), \\
		(\boldsymbol{\sigma}, \nabla \bm\phi)-(\boldsymbol{\phi}, \nabla^{\perp} p) &=(\bm L, \boldsymbol{\phi})+(\boldsymbol{f}, \boldsymbol{\phi}), &\forall \boldsymbol{\phi} \in\left[H_0^1(\Omega)\right]^2, \\
		-(\boldsymbol{\theta}, \nabla^{\perp} q)-(\bm R, \nabla^{\perp} q) &=0, &\forall q \in \widehat{H}^1(\Omega), \\
		(\bm G,\bm H)-( \nabla \omega,\bm H)&=0,&\forall \bm H \in \bm{\mathrm{L}}^2(\Omega), \\
		(\bm G, \nabla s) &=(\boldsymbol{\theta}, \nabla s)+\lambda^{-1}t^2(g,s), &\forall s \in H_0^1(\Omega), \\
		\bm\gamma&=\bm L+\lambda t^{-2}\bm R.
	\end{align}
\end{subequations}\par
For abbreviation, we introduce the following continuous bilinear forms:
\begin{align*}
	\mathfrak{a}(\bm L,r;\bm M,\mu)&:=(\bm L,\bm M)-(\nabla r,\bm M)+(\bm L, \nabla \mu), \\
	\mathfrak{b}(\bm \sigma,\bm R,\theta,p;\bm \tau,\bm S,\bm \phi,q)&:=\left(\mathcal{C}^{-1} \boldsymbol{\sigma}, \boldsymbol{\tau}\right)-(\nabla \boldsymbol{\theta}, \boldsymbol{\tau}) 
	+(\lambda t^{-2}\bm R,\bm S)+(\boldsymbol{\sigma}, \nabla \phi)\\
	&\qquad-(\nabla^{\perp}p,\bm S)-(\boldsymbol{\phi}, \nabla^{\perp} p)
	+(\nabla^{\perp}q,\bm R)+(\boldsymbol{\theta}, \nabla^{\perp} q). 
\end{align*}
So \eqref{continuous form} can be reformulated as follows:
\begin{subequations}\label{continuousfor}      
	\begin{align} 
		\label{stage1} 
		\mathfrak{a}(\bm L,r;\bm M,\mu)&=(g, \mu), \qquad \forall (\bm M,\mu ) \in\bm{\mathrm{L}}^2(\Omega) \times H_0^1(\Omega), \\
		\label{stage2} 
		\mathfrak{b}(\bm \sigma,\bm R,\bm\theta,p;\bm \tau,\bm S,\bm \phi,q)&=(\bm L, \boldsymbol{\phi})+(\boldsymbol{f}, \boldsymbol{\phi}), \\
		\forall (\bm \tau, \bm S,\boldsymbol{\phi},q) & \in (\left[\mathrm{L}^2(\Omega)\right]^{2 \times 2}\cap \mathbb{S})\times \bm{\mathrm{L}}^2(\Omega) \times\left[H_0^1(\Omega)\right]^2\times\widehat{H}^1(\Omega),\nonumber\\ 
		\label{stage3}
		\mathfrak{a}(\bm G,\omega;\bm H,s)&= \left(\boldsymbol{\theta}, \nabla s\right)+\lambda^{-1}t^2(g,s), \qquad 
		\forall (\bm H,s)\in \bm{\mathrm{L}}^2(\Omega) \times H_0^1(\Omega), \\
		\bm\gamma &=\bm L+\lambda t^{-2}\bm R.
	\end{align}
\end{subequations}
\subsection{Well-posedness and Regularity Result} In
\cite{Falk2008}, the authors established such estimates below for the case of clamped plate, which will be used later in our proof.
\begin{theorem}\label{regularity result}
	Let $\Omega$ be a convex polygon or a smoothly bounded domain in the plane. For any $t\in(0,1]$, $\bm f\in [H^{-1}(\Omega)]^2$, and $g \in H^{-1}(\Omega)$, there exits a unique solution $(r,\bm\sigma,\bm\theta,p,\omega)\in H_0^1(\Omega)\times(\left[\mathrm{L}^2(\Omega)\right]^{2 \times 2}\cap \mathbb{S})\times\left[H_0^1(\Omega)\right]^2\times\widehat{H}^1(\Omega)\times H_0^1(\Omega) $ satisfying \eqref{stage1}, \eqref{stage2} and \eqref{stage3} and there exits a constant $C$ independent of $t,\bm f$ and $g$ such that
	\begin{equation}
		\begin{aligned}
			\|r\|_{2}+\|\bm \sigma\|_{1}+\|\bm\theta\|_{2}+\|p\|_1+t\|p\|_{2}+\|\omega\|_{2}\leq C(\|\bm f\|_0+\|g\|_{0}).
		\end{aligned}
	\end{equation}
\end{theorem}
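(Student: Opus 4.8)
The plan is to exploit the block-triangular structure of \eqref{continuousfor}: the three stages are solved sequentially, with only the perturbed saddle-point problem \eqref{stage2} requiring genuine work. Since this is precisely the estimate established in \cite{Falk2008}, I would organize the argument into three steps mirroring the three stages (uniqueness being supplied along the way).

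\emph{Step 1 (the two Poisson problems).} The form $\mathfrak{a}(\bm L,r;\cdot,\cdot)$ in \eqref{stage1} is the first-order reformulation of $-\Delta r=g$ in $\Omega$ with $r=0$ on $\partial\Omega$; eliminating $\bm L=\nabla r$ leaves the standard weak Poisson problem, so Lax--Milgram gives a unique $r\in H_0^1(\Omega)$ even for $g\in H^{-1}(\Omega)$. When moreover $g\in\mathrm{L}^2(\Omega)$, the convexity (or smoothness) of $\Omega$ and classical elliptic shift regularity upgrade this to $r\in H^2(\Omega)$ with $\|r\|_2\leq C\|g\|_0$, hence $\bm L=\nabla r\in\bm H^1(\Omega)$ and $\|\bm L\|_1\leq C\|g\|_0$. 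The same reasoning applied to \eqref{stage3}, once $\bm\theta$ is known, reads $-\Delta\omega=-\nabla\cdot\bm\theta+\lambda^{-1}t^2 g$ with $\omega=0$ on $\partial\Omega$; using $t\leq 1$ and $\bm\theta\in\bm H^1(\Omega)$ from Step 2 gives $\omega\in H^2(\Omega)$ with $\|\omega\|_2\leq C(\|\bm\theta\|_1+\|g\|_0)$.

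\emph{Step 2 (the perturbed Stokes system --- the core).} Eliminating the auxiliary unknowns $\bm\sigma=\mathcal C\bm\epsilon(\bm\theta)$ and $\bm R=\lambda^{-1}t^2\nabla^\perp p$ from \eqref{stage2} leaves a singularly perturbed Stokes-type system for $(\bm\theta,p)\in[H_0^1(\Omega)]^2\times\widehat H^1(\Omega)$: $(\mathcal C\bm\epsilon(\bm\theta),\bm\epsilon(\bm\phi))-(\bm\phi,\nabla^\perp p)=(\bm L+\bm f,\bm\phi)$ for all $\bm\phi$, together with $-(\bm\theta,\nabla^\perp q)-\lambda^{-1}t^2(\nabla^\perp p,\nabla^\perp q)=0$ for all $q$, in which $\nabla^\perp$ and $\nabla\times$ assume the roles of $\nabla$ and $\nabla\cdot$. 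Korn's inequality and \eqref{key} give coercivity of $(\mathcal C\bm\epsilon(\cdot),\bm\epsilon(\cdot))$ on $[H_0^1(\Omega)]^2$, while surjectivity of $\nabla\times\colon[H_0^1(\Omega)]^2\to\mathrm{L}^2(\Omega)/\mathbb{R}$ supplies the inf--sup condition, so Brezzi's theory for perturbed saddle-point problems yields existence and uniqueness of $(\bm\theta,p)$ with the $t$-uniform bound $\|\bm\theta\|_1+\|p\|_0+t|p|_1\leq C(\|\bm L\|_0+\|\bm f\|_0)$; reconstructing $\bm\sigma$ and $\bm R$ then controls $\|\bm\sigma\|_0$ and $\lambda t^{-2}\|\bm R\|_0$. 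To reach $\bm\sigma\in\bm H^1(\Omega)$, $\bm\theta\in\bm H^2(\Omega)$, $p\in H^1(\Omega)$, and the weighted bound $t\|p\|_2$, I would invoke the shift theorem for the Stokes operator on a convex polygon or a smoothly bounded domain, applied to the $\mathrm{L}^2$ datum $\bm L+\bm f$ (recall $\bm L\in\bm H^1(\Omega)$ from Step 1), all the while tracking the $t$-dependence: the penalty term induces a boundary layer in $p$, so that $\|p\|_1$ is bounded uniformly but only $t\|p\|_2$ is. Assembling the three steps and using $\bm\gamma=\bm L+\lambda t^{-2}\bm R$ gives the stated inequality.

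The main obstacle is Step 2, namely proving that the Stokes shift estimate survives the singular perturbation $\lambda^{-1}t^2$ uniformly as $t\to0$ --- that $C$ does not deteriorate despite the boundary layer --- together with the identification of the correct weight $t\|p\|_2$ rather than $\|p\|_2$. This is the substance of the regularity analysis in \cite{Falk2008} (compare also \cite{arnold1989uniformly}); the remaining ingredients are standard elliptic regularity on convex polygons and a routine application of Brezzi's theory.
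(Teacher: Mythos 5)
The paper offers no proof of this theorem: it is quoted verbatim from \cite{Falk2008}, where the regularity theory for the Helmholtz-decomposed Reissner--Mindlin system is developed. Your outline is consistent with that treatment---sequential handling of the two Poisson problems, Brezzi theory plus Korn's inequality for the perturbed Stokes-type system, and elliptic/Stokes shift estimates on a convex or smooth domain---and you correctly identify the only genuinely non-trivial ingredient, the $t$-uniform bound $\|p\|_1 + t\|p\|_2 \leq C$ despite the singular perturbation, deferring it to the same reference the paper relies on.
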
\par
In what follows we suppose that the following regularity result holds:
\begin{assumption}\label{assumption of regularity}
	Under the condition of \Cref{regularity result} and assume $\bm f\in [H^{k-1}(\Omega)]^2$, and $g \in H^{k-1}(\Omega)$, $k\geq1$, the solution of \eqref{stage1}, \eqref{stage2} and \eqref{stage3} holds
	\begin{equation}
		\begin{aligned}
			\|r\|_{k+1}+\|\bm \sigma\|_{k}+\|\bm\theta\|_{k+1}+\|p\|_k+t\|p\|_{k+1}+\|\omega\|_{k+1}\leq C(\|\bm f\|_{k-1}+\|g\|_{k-1}).
		\end{aligned}
	\end{equation}
\end{assumption}
\section{Our Method}
\label{sec:main}
\subsection{HDG Formulation}
For any integer $k\geq1,\max{(1,k-1)}\leq \ell\leq k$, we introduce the following finite dimensional function spaces:
\begin{align*}
	\mathcal{L}_h&: = \left \{\bm M_h\in\mathbf{L} ^2(\Omega ): \bm M_h|_K\in [\mathcal{P}^{k-1}(K)]^2,\forall K\in \mathcal{T}_h\right \}, \\
	\mathcal{R}_h&: = \left \{ \mathbf{\mu}_h\in \mathrm{L}^2(\Omega): \mathbf{\mu}_h|_K\in \mathcal{P}^{k}(K),\forall K\in \mathcal{T}_h\right \}, \\
	\widehat{\mathcal{R}}_h&:= \left \{ \widehat \mu_h\in \mathrm{L}^2(\mathcal{F}_h ): \widehat \mu_h|_F\in \mathcal{P}^{k-1}(F),\forall F\in \mathcal{F}_h, \widehat \mu_h|_{\partial\Omega}=0 \right \}, \\
	\Sigma_h&: = \left \{ \tau_h\in\left[\mathrm{L}^2(\Omega)\right]^{2 \times 2}: \tau_h|_K\in [\mathcal{P}^{k-1}(K)]^{2\times 2}\cap \mathbb{S},\forall K\in \mathcal{T}_h\right \}, \\
	\mathcal{Y}_h&:=\left \{ \bm \phi_h\in\mathbf{L}^{2}(\Omega ): \bm \phi_h|_K\in [\mathcal{P}^k(K)]^{2},\forall K\in \mathcal{T}_h\right \}, \\
	\widehat{\mathcal{Y}}_h&:=\left \{ \bm{\widehat \phi} _h\in \mathbf{L}^2(\mathcal{F} _h):\bm{\widehat \phi} _h|_F\in[ \mathcal{P}^{\ell}(F)]^2,\forall F\in \mathcal{F}_h, \bm{\widehat \phi}|_{\partial\Omega}=\bm 0 \right \}, \\
	\mathcal{S}_h&:=\left\{\bm S_h\in \mathbf{L}^2(\Omega):\bm S_h|_K \in [\mathcal{P}^{k-1}(K)]^2,\forall K \in \mathcal{T}_h\right\}, \\
	\mathcal{P}_h&:=\left \{ q_h\in\mathrm{L}_0^2(\Omega ): q_h|_K\in \mathcal{P}^k(K),\forall K\in \mathcal{T}_h\right \}, \\
	\widehat{\mathcal{P}}_h&:= \left\{\widehat q_h\in \mathrm{L}^2(\mathcal{F}_h ): \widehat q_h|_F\in \mathcal{P}^{k-1}(F),\forall F\in \mathcal{F}_h \right \}, \\
	\mathcal{G}_h&: = \left \{ \bm H_h\in\mathbf{L} ^2(\Omega ): \bm H_h|_K\in [\mathcal{P}^{k-1}(K)]^2,\forall K\in \mathcal{T}_h\right \} ,\\
	\mathcal{W}_h&: = \left \{ s_h\in \mathrm{L}^2(\Omega): s_h|_K\in \mathcal{P}^{k}(K),\forall K\in \mathcal{T}_h\right \}, \\
	\widehat{\mathcal{W}}_h&:= \left\{\widehat s_h\in \mathrm{L}^2(\mathcal{F}_h ): \widehat s_h|_F\in \mathcal{P}^{k-1}(F),\forall F\in \mathcal{F}_h,\widehat s_h|_{\partial\Omega}=0 \right \}.
\end{align*}\par
Our numerical schemes reads as follows.\par
\noindent\textbf{Step One:} find $(\bm L_h,r_h,\widehat r_h) \in \mathcal{L}_h \times \mathcal{R}_h \times \widehat{\mathcal{R}}_h$  {\color{black} such that}
\begin{subequations}\label{discrete formulation}
	\begin{align}
		(\bm L_h,\bm M_h)_{\mathcal T_h}+(r_h,\nabla\cdot\bm M_h)_{\mathcal T_h}
		-\langle \widehat r_h,\bm M_h\cdot\bm n\rangle_{\partial\mathcal T_h}&=0, \\
		-(\nabla\cdot\bm L_h,\mu_h)_{\mathcal T_h}
		+\langle \bm L_h\cdot\bm n,\widehat \mu_h\rangle_{\partial\mathcal T_h}
		\qquad\qquad\qquad& 
		\\ \nonumber +\langle
		\alpha_1(\Pi_{k-1}^{\partial}r_h-\widehat r_h),
		\Pi_{k-1}^{\partial}\mu_h-\widehat \mu_h
		\rangle_{\partial\mathcal T_h}
		&=(g,\mu_h)_{\mathcal T_h}.
	\end{align}
	\textbf{Step Two:} find $(\bm\sigma_h,\bm R_h,\bm\theta_h,\widehat{\bm\theta}_h,p_h,\widehat p_h) \in \Sigma_h \times \mathcal{S}_h\times \mathcal{Y}_h \times \widehat{\mathcal{Y}}_h\times\mathcal{P}_h\times\widehat{\mathcal{P}}_h$ {\color{black} such that}
	\begin{align}
		(\mathcal C^{-1}\bm\sigma_h,\bm\tau_h)_{\mathcal T_h}+(\bm\theta_h,\nabla\cdot\bm\tau)_{\mathcal T_h}
		-\langle 
		\widehat{\bm\theta}_h,\bm\tau_h\bm n
		\rangle_{\partial\mathcal T_h}&=0, \\
		(\lambda t^{-2} \bm R_h,\bm S_h)_{\mathcal T_h}
		+(p_h,\nabla\times\bm S_h)_{\mathcal T_h}
		-\langle\widehat p_h,\bm S_h\cdot\bm t\rangle_{\partial\mathcal T_h}&=0, \\
		-(\nabla\cdot\bm \sigma_h,\bm\phi_h)_{\mathcal T_h}
		+\langle
		\bm\sigma_h\bm n,\widehat{\bm\phi}_h
		\rangle_{\partial\mathcal T_h}
		+(\nabla\times\bm\phi_h,p_h)_{\mathcal T_h}
		-\langle 
		\bm\phi_h\cdot\bm t,\widehat p_h
		\rangle_{\partial\mathcal T_h}&\\
		+\langle 
		\alpha_2(\bm\Pi_{\ell}^{\partial}\bm{\theta}_h-\widehat {\bm\theta}_h),
		\bm\Pi_{\ell}^{\partial}\bm\phi_h-\widehat {\bm\phi}_h
		\rangle_{\partial\mathcal T_h}
		=(\bm L_h,\bm {\phi}_h)_{\mathcal T_h}
		+(\bm f,\bm \phi_h)_{\mathcal T_h},&\\
		-(\nabla\times\bm \theta_h,q_h)_{\mathcal T_h}
		+\langle 
		\bm\theta_h\cdot\bm t,\widehat q_h
		\rangle_{\partial\mathcal T_h}&\\	 
		-(\nabla\times\bm R_h,q_h)_{\mathcal T_h}
		+\langle\bm R_h\cdot\bm t,\widehat q_h\rangle_{\partial{\mathcal{T}_h}}
		&\\
		+\langle 
		\alpha_3(\Pi_{k-1}^{\partial}p_h-\widehat p_h),
		\Pi_{k-1}^{\partial}q_h-\widehat q_h
		\rangle_{\partial{\mathcal{T}_h}}&=0.
	\end{align}
	\textbf{Step Three:} find $(\bm G_h,\omega_h,\widehat \omega_h)\in \mathcal{G}_h\times \mathcal{W}_h \times \widehat{\mathcal{W}}_h $ {\color{black} such that}
	\begin{align}
		(\bm G_h,\bm H_h)_{\mathcal T_h}+(\omega_h,\nabla\cdot\bm H_h)_{\mathcal T_h}
		-\langle \widehat \omega_h,\bm H_h\cdot\bm n\rangle_{\partial\mathcal T_h}&=0, \\
		-(\nabla\cdot\bm G_h,s_h)_{\mathcal T_h}
		+\langle \bm G_h\cdot\bm n,\widehat s_h\rangle_{\partial\mathcal T_h}
		+\langle\alpha_1(\Pi_{k-1}^{\partial}\omega_h-\widehat \omega_h), &
		\Pi_{k-1}^{\partial}s_h-\widehat s_h
		\rangle_{\partial\mathcal T_h} \\ 
		=\lambda^{-1}t^2(g,s_h)+\langle \bm \theta_h\cdot\bm n,\widehat s_h\rangle_{\partial\mathcal T_h} & -(\nabla\cdot\bm \theta_h, s_h)_{\mathcal T_h}.
	\end{align}
	\textbf{Step Four:} we simply set
	\begin{align}
		\bm\gamma_h=\bm L_h+\lambda t^{-2}\bm R_h.
	\end{align}
\end{subequations}\par
Likewise, we introduce discrete inner products and bi-linear forms as follows:
\begin{align*}
	&\mathfrak{a}_h(\bm L_h,r_h,\widehat{r}_h;\bm M_h,\mu_{h},\widehat{\mu}_h)\\
	&\qquad:=(\bm L_h,\bm M_h)_{\mathcal T_h}+(r_h,\nabla\cdot\bm M_h)_{\mathcal T_h}-\langle \widehat r_h,\bm M_h\cdot\bm n\rangle_{\partial\mathcal T_h}\\
	&\qquad\qquad-(\nabla\cdot\bm L_h,\mu_h)_{\mathcal T_h}+\langle \bm L_h\cdot\bm n,\widehat \mu_h\rangle_{\partial\mathcal T_h}\\
	&\qquad\qquad+\langle\alpha_1(\Pi_{k-1}^{\partial}r_h-\widehat r_h),\Pi_{k-1}^{\partial}\mu_h-\widehat \mu_h
	\rangle_{\partial\mathcal T_h}, \\
	& \mathfrak{b}_h(\bm \sigma_h,\bm R_h,\bm \theta_h,\widehat{\bm\theta}_h,p_h,\widehat p_h ;\bm \tau_h,\bm S_h,\bm \phi_h,\widehat{\phi}_h,q_h,\widehat q_h)\\
	&\qquad: =(\mathcal C^{-1}\bm\sigma_h,\bm\tau_h)_{\mathcal T_h}+(\bm\theta_h,\nabla\cdot\bm\tau)_{\mathcal T_h}-\langle \widehat{\bm\theta}_h,\bm\tau_h\bm n\rangle_{\partial\mathcal T_h}\\
	&\qquad\qquad+(\lambda t^{-2} \bm R_h,\bm S_h)_{\mathcal T_h}-(\bm\phi_h,\nabla\cdot\bm\sigma_h)_{\mathcal T_h}+\langle\widehat{\bm\phi}_h,\bm\sigma_h\bm n\rangle_{\partial\mathcal T_h}\\
	&\qquad\qquad+\langle\alpha_2(\bm\Pi_{\ell}^{\partial}\bm{\theta}_h-\widehat {\bm\theta}_h),\bm\Pi_{\ell}^{\partial}\bm\phi_h-\widehat {\bm\phi}_h\rangle_{\partial\mathcal T_h}\\
	&\qquad\qquad+(p_h,\nabla\times\bm S_h)_{\mathcal T_h}-\langle\widehat p_h,\bm S_h\cdot\bm t\rangle_{\partial\mathcal T_h}+(\nabla\times\bm\phi_h,p_h)_{\mathcal T_h}\\
	&\qquad\qquad-\langle \bm\phi_h\cdot\bm t,\widehat p_h\rangle_{\partial\mathcal T_h}-(q_h,\nabla\times\bm R_h)_{\mathcal T_h}
	+\langle\widehat q_h,\bm R_h\cdot\bm t\rangle_{\partial\mathcal T_h}\\
	&\qquad\qquad-(\nabla\times\bm\theta_h,q_h)_{\mathcal T_h}+\langle \bm\theta_h\cdot\bm t,\widehat q_h\rangle_{\partial\mathcal T_h}\\
	&\qquad\qquad+\langle\alpha_3(\Pi_{k-1}^{\partial}p_h-\widehat p_h),\Pi_{k-1}^{\partial}q_h-\widehat q_h\rangle_{\partial\mathcal T_h}. 
\end{align*}
So, the HDG formulation could be rewritten in the following equivalent compact form:
\begin{subequations}\label{discretfor}
	\begin{equation}\label{discrete stage1}
		\mathfrak{a}_h(\bm L_h,r_h,\widehat{r}_h;\bm M_h,\mu_{h},\widehat{\mu}_h)=(g,\mu)_{\mathcal T_h},\qquad\forall (\bm M_h,\mu_{h},\widehat{\mu}_h)\in\mathcal{L}_h \times \mathcal{R}_h \times \widehat{\mathcal{R}}_h,
	\end{equation}
	\begin{equation}\label{discrete stage2}
		\begin{aligned}
			&\mathfrak{b}_h(\bm \sigma_h,\bm R_h,\bm \theta_h,\widehat{\bm\theta}_h,p_h,\widehat p_h ;\bm \tau_h,\bm S_h,\bm \phi_h,\widehat{\phi}_h,q_h,\widehat q_h)=(\bm L_h,\bm {\phi}_h)_{\mathcal T_h}
			+(\bm f,\bm \phi_h)_{\mathcal T_h} \\
			&\quad\qquad\qquad\qquad\forall (\bm \tau_h,\bm S_h,\bm \phi_h,\widehat{\phi}_h,q_h,\widehat q_h)
			\in \Sigma_h \times \mathcal{S}_h\times \mathcal{Y}_h \times\widehat{\mathcal{Y}}_h\times\mathcal{P}_h\times\widehat{\mathcal{P}}_h,
		\end{aligned}
	\end{equation}
	\begin{equation}\label{discrete stage3}
		\begin{aligned}	   	
			\mathfrak{a}_h(\bm G_h, \omega_h,\widehat{\omega}_h;\bm H_h,s_h,\widehat{s}_h)&= \lambda^{-1}t^2(g,s_h)+\langle \bm \theta_h\cdot\bm n,\widehat s_h\rangle_{\partial\mathcal T_h}-(\nabla\cdot\bm \theta_h,s_h)_{\mathcal T_h} \\
			&\qquad\qquad\qquad\qquad\forall (\bm H_h,s_h,\widehat{s}_h)\in  \mathcal{G}_h\times \mathcal{W}_h \times \widehat{\mathcal{W}}_h,
		\end{aligned}
	\end{equation}
	\begin{equation}
		\bm\gamma_h=\bm L_h+\lambda t^{-2}\bm R_h.
	\end{equation}	
\end{subequations}
So we consider split the processing of solving the discrete system \eqref{discrete formulation} into three problems.\par
For the choice of the stabilization coefficients, \cite{doi:10.1137/070706616},\cite{Cockburn20051067} and \cite{Nguyen20111147} give the discussion for Possion problem and the Stoke flow problem. In this paper, we take 
$$\alpha_1|_K:=h_K^{-1},\quad\alpha_2|_K:=h_K^{-1},\quad\alpha_3|_K:=h_K+t^2h_K^{-1}, \qquad\forall K \in \mathcal{T}_h. $$
In the following parts, we should also use the notation $\mathfrak h|_K=h_K$, for all $K\in\mathcal T_h$. \par
\subsection{Prior Error for Step One}
We firstly introduce a norm {\color{black} on space $\mathcal{L}_h \times \mathcal{R}_h \times \widehat{\mathcal{R}}_h$} as
$$\|(\bm M_h, \mu_h,\widehat{\mu}_h)\|^2_{\mathfrak{a}_h}=\|\bm M_h\|^2_{\mathcal{T}_h}+\|\nabla \mu_h\|^2_{\mathcal{T}_h}+\|\alpha_1^\frac{1}{2}(\Pi_{k-1}^{\partial}\mu_h-\widehat \mu_h)\|^2_{\partial\mathcal{T}_h},$$
that appears in natural way. By a standard argument, we can obtain the following lemma.
\begin{lemma}
	For all $(\bm L_h,r_h,\widehat{r}_h)\in\mathcal{L}_h \times \mathcal{R}_h \times \widehat{\mathcal{R}}_h$ , we have 
	\begin{equation}\label{discretelbb1}
		\sup_{\bm 0\neq(\bm M_h, \mu_h,\widehat{\mu}_h)\in\mathcal{L}_h \times \mathcal{R}_h \times \widehat{\mathcal{R}}_h}\frac{\mathfrak{a}_h(\bm L_h,r_h,\widehat{r}_h;\bm M_h,\mu_{h},\widehat{\mu}_h)}{\|(\bm M_h, \mu_h,\widehat{\mu}_h)\|_{\mathfrak{a}_h}}\geq C\|(\bm L_h,r_h,\widehat{r}_h)\|_{\mathfrak{a}_h}.
	\end{equation}
\end{lemma}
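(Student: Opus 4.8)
The plan is to establish the inf–sup-type bound \eqref{discretelbb1} by exhibiting, for each fixed $(\bm L_h,r_h,\widehat r_h)\in\mathcal{L}_h \times \mathcal{R}_h \times \widehat{\mathcal{R}}_h$, an explicit test triple $(\bm M_h,\mu_h,\widehat\mu_h)$ whose $\mathfrak{a}_h$-norm is controlled by $\|(\bm L_h,r_h,\widehat r_h)\|_{\mathfrak{a}_h}$ and for which $\mathfrak{a}_h(\bm L_h,r_h,\widehat r_h;\bm M_h,\mu_h,\widehat\mu_h)$ is bounded below by a constant multiple of $\|(\bm L_h,r_h,\widehat r_h)\|_{\mathfrak{a}_h}^2$. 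The natural first choice is the ``diagonal'' test function $(\bm M_h,\mu_h,\widehat\mu_h)=(\bm L_h,r_h,\widehat r_h)$: plugging this into $\mathfrak{a}_h$, the two cross terms $(r_h,\nabla\cdot\bm L_h)_{\mathcal T_h}-\langle\widehat r_h,\bm L_h\cdot\bm n\rangle_{\partial\mathcal T_h}$ and $-(\nabla\cdot\bm L_h,r_h)_{\mathcal T_h}+\langle\bm L_h\cdot\bm n,\widehat r_h\rangle_{\partial\mathcal T_h}$ cancel exactly, leaving $\mathfrak{a}_h(\bm L_h,r_h,\widehat r_h;\bm L_h,r_h,\widehat r_h)=\|\bm L_h\|_{\mathcal T_h}^2+\|\alpha_1^{1/2}(\Pi_{k-1}^\partial r_h-\widehat r_h)\|_{\partial\mathcal T_h}^2$. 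This already controls two of the three terms in the norm, but not $\|\nabla r_h\|_{\mathcal T_h}^2$.

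To recover the missing gradient term I would add a second, carefully chosen test function. Integrating by parts elementwise, $-(\nabla\cdot\bm M_h,r_h)_{\mathcal T_h}+\langle\bm M_h\cdot\bm n,\widehat r_h\rangle_{\partial\mathcal T_h}=(\bm M_h,\nabla r_h)_{\mathcal T_h}-\langle\bm M_h\cdot\bm n,\Pi_{k-1}^\partial r_h-\widehat r_h\rangle_{\partial\mathcal T_h}$ (using that $\bm M_h\cdot\bm n$ is a polynomial of degree $k-1$ on each face, so it sees only $\Pi_{k-1}^\partial r_h$). Hence choosing the test triple $(-\delta\,\bm\Pi_{k-1}^o\nabla r_h,0,0)$ for a small parameter $\delta>0$ and combining with the diagonal choice, the leading contribution is $\delta(\bm\Pi_{k-1}^o\nabla r_h,\nabla r_h)_{\mathcal T_h}=\delta\|\bm\Pi_{k-1}^o\nabla r_h\|_{\mathcal T_h}^2$, while the remaining terms — a term $-\delta(\bm L_h,\bm\Pi_{k-1}^o\nabla r_h)_{\mathcal T_h}$ from the mass block and a jump term $\delta\langle\bm\Pi_{k-1}^o\nabla r_h\cdot\bm n,\Pi_{k-1}^\partial r_h-\widehat r_h\rangle_{\partial\mathcal T_h}$ from the integration by parts — are absorbed by Young's inequality, the inverse estimate \eqref{4} (which gives $\|\bm\Pi_{k-1}^o\nabla r_h\cdot\bm n\|_{\partial K}\le Ch_K^{-1/2}\|\nabla r_h\|_{0,K}$ via \eqref{6} and \eqref{1}), and the definition $\alpha_1|_K=h_K^{-1}$. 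One still needs $\|\nabla r_h\|_{\mathcal T_h}\le C\|\bm\Pi_{k-1}^o\nabla r_h\|_{\mathcal T_h}$; when $\ell=k$ this is not immediate since $\nabla r_h$ has degree $k-1$ and is reproduced, but in general $\nabla r_h\in[\mathcal P^{k-1}(K)]^2$ exactly, so $\bm\Pi_{k-1}^o\nabla r_h=\nabla r_h$ and the projection is harmless — this is the point where the choice of spaces $\mathcal L_h=[\mathcal P^{k-1}]^2$, $\mathcal R_h=\mathcal P^k$ is used.

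Finally I would take $(\bm M_h,\mu_h,\widehat\mu_h)=(\bm L_h-\delta\nabla r_h,\;r_h,\;\widehat r_h)$, collect the estimates, and fix $\delta$ small enough (independent of $h$ and $t$) that the combination dominates $c\big(\|\bm L_h\|_{\mathcal T_h}^2+\|\nabla r_h\|_{\mathcal T_h}^2+\|\alpha_1^{1/2}(\Pi_{k-1}^\partial r_h-\widehat r_h)\|_{\partial\mathcal T_h}^2\big)=c\|(\bm L_h,r_h,\widehat r_h)\|_{\mathfrak{a}_h}^2$; simultaneously the triangle inequality and the inverse estimate give $\|(\bm L_h-\delta\nabla r_h,r_h,\widehat r_h)\|_{\mathfrak{a}_h}\le C\|(\bm L_h,r_h,\widehat r_h)\|_{\mathfrak{a}_h}$, and dividing yields \eqref{discretelbb1}. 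The main obstacle is the bookkeeping in the second paragraph: balancing the parameter $\delta$ against the constants coming from the inverse inequalities and the stabilization scaling $\alpha_1\sim h_K^{-1}$, and making sure every cross term produced by the elementwise integration by parts is matched either by a norm term it can be absorbed into or by the stabilization. Everything else is routine.
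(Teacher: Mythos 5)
Your proposal is correct, and it is precisely the ``standard argument'' that the paper invokes without writing out: test with $(\bm L_h,r_h,\widehat r_h)$ itself so the cross terms cancel and you keep $\|\bm L_h\|_{\mathcal T_h}^2+\|\alpha_1^{1/2}(\Pi_{k-1}^{\partial}r_h-\widehat r_h)\|_{\partial\mathcal T_h}^2$, then perturb the first slot by a small multiple of $\nabla r_h$ (admissible since $\nabla r_h\in[\mathcal P^{k-1}(K)]^2\subset\mathcal L_h$ elementwise, so the projection is the identity) to recover $\delta\|\nabla r_h\|_{\mathcal T_h}^2$, absorbing the mass and jump cross terms via Young, the discrete trace inequality \eqref{6} and $\alpha_1|_K=h_K^{-1}$, and finally bounding the test-function norm by the triangle inequality. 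The only blemish is a sign slip in your intermediate identity: the terms of $\mathfrak a_h$ that see $\bm M_h$ are $(r_h,\nabla\cdot\bm M_h)_{\mathcal T_h}-\langle\widehat r_h,\bm M_h\cdot\bm n\rangle_{\partial\mathcal T_h}$, the negative of what you wrote, so the sign of the perturbation $\pm\delta\nabla r_h$ must be fixed so that the leading contribution is $+\delta\|\nabla r_h\|_{\mathcal T_h}^2$; this is pure bookkeeping and does not affect the validity of the argument (your aside about $\ell$ is irrelevant here, as $\ell$ only enters the spaces of Step Two).
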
\par
For the analysis, we state the following interpolation, see definition and more details in \cite{LI201637}. We here give some properties of the interpolation.
\begin{lemma}
	If $\mathfrak{n}$ is a positive integer that is large enough, then there exits an interpolation operator $\mathcal{I}_h: \mathrm{L}^2(\Omega)\times\mathrm{L}^2(\mathcal{F}_h)\rightarrow H_0^1(\Omega)\cap \mathcal{P}_{k+\mathfrak{n}}(\mathcal{T}_h)$ such that for all $(r_h,\widehat{r}_h)\in\mathrm{L}^2(\Omega)\times\mathrm{L}^2(\mathcal{F}_h)$, for all $K\in \mathcal{T}_h$, and for all $F\in \mathcal{F}_h$, we have 
	\begin{subequations}\label{pro}
		\begin{align}
			\label{pro3}
			(\mathcal{I}_h(r_h,\widehat{r}_h),\mu_h)_K&=(r_h,\mu)_K, &\forall \mu\in \mathcal{P}_k(K),\\
			\label{pro4}
			(\mathcal{I}_h(r_h,\widehat{r}_h),\widehat{\mu})_F&=(\widehat{r}_h,\widehat{\mu})_F, &\forall \widehat{\mu}\in \mathcal{P}_k(F),\\
			\label{pro2}
			\|\nabla\mathcal{I}_h(r_h,\widehat{r}_h)\|_{\mathcal{T}_h}&\leq C\left(\|\nabla r_h\|_{\mathcal{T}_h}+\|\mathfrak{h}^{-1/2}(r_h-\widehat{r}_h)\|_{\partial\mathcal{T}_h}\right),\\
			\label{pro1}
			\|r_h-\mathcal{I}_h(r_h,\widehat{r}_h)\|_{\mathcal{T}_h}&\leq C h\left(\|\nabla r_h\|_{\mathcal{T}_h}+\|\mathfrak{h}^{-1/2}(r_h-\widehat{r}_h)\|_{\partial\mathcal{T}_h}\right),
		\end{align}
	\end{subequations}
	where $\mathcal{P}_{k+\mathfrak{n}}(\mathcal{T}_h)=\left\{r_h\in\mathrm{L}^2(\Omega):r_h\in\mathcal{P}_{k+\mathfrak{n}}(K), \forall K \in\mathcal{T}_h\right\}$. Boldface fonts $\bm{\mathcal{I}}_h$ will be used for vector interpolation operator counterpart.
\end{lemma}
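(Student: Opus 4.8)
The plan is to build $\mathcal{I}_h(r_h,\widehat r_h)$ in three layers: first a conforming averaging of the pair $(r_h,\widehat r_h)$, then an edge-bubble correction enforcing the face moments \eqref{pro4}, and finally an interior-bubble correction enforcing the element moments \eqref{pro3}. The stability \eqref{pro2} and approximation \eqref{pro1} estimates are then assembled by the triangle inequality, using the inverse inequality \eqref{4} to pass from $\mathrm{L}^2$ control to $H^1$ control on the two bubble corrections.

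First I would construct a continuous piecewise polynomial $r_h^{c}\in H_0^1(\Omega)\cap\mathcal P_{k}(\mathcal T_h)$ by an Oswald / Karakashian--Pascal-type averaging: at each Lagrange node lying in the interior of an element use the value of $r_h$ there, while at a node lying on an edge $F$ use a value determined solely by $\widehat r_h|_F$ (for $F\subset\partial\Omega$ this value is $0$, so the boundary trace of $r_h^{c}$ vanishes and $r_h^{c}\in H_0^1$); continuity is automatic because shared nodes receive a single value. On two elements $K^\pm$ sharing $F$ the traces $r_h^{c}|_{K^+}$ and $r_h^{c}|_{K^-}$ differ only through quantities of the form $r_h|_{K^\pm}-\widehat r_h$ on edges, so a scaling argument on a reference element together with the trace and inverse inequalities \eqref{3}, \eqref{4}, \eqref{6} yields
\begin{align*}
	\|r_h-r_h^{c}\|_{\mathcal T_h}&\le C\,h\big(\|\nabla r_h\|_{\mathcal T_h}+\|\mathfrak h^{-1/2}(r_h-\widehat r_h)\|_{\partial\mathcal T_h}\big),\\
	\|\nabla r_h^{c}\|_{\mathcal T_h}&\le C\big(\|\nabla r_h\|_{\mathcal T_h}+\|\mathfrak h^{-1/2}(r_h-\widehat r_h)\|_{\partial\mathcal T_h}\big).
\end{align*}

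Next I would fix $\mathfrak n$ large enough that the relevant bubble spaces are rich enough. For each edge $F$ shared by $K^\pm$ there is an edge bubble in $\mathcal P_{k+\mathfrak n}(K^+\cup K^-)\cap H_0^1$, supported in $K^+\cup K^-$, vanishing on every other edge of $K^\pm$ and matching across $F$, whose $\mathrm{L}^2(F)$-moment map onto $\mathcal P_{k}(F)$ is surjective with a boundedly invertible right inverse after scaling. Using these, add $\sum_F\delta^F$ with $\delta^F$ an edge bubble chosen so that $(r_h^{c}+\sum_F\delta^F,\widehat\mu)_F=(\widehat r_h,\widehat\mu)_F$ for all $\widehat\mu\in\mathcal P_k(F)$; since $\widehat r_h|_F-\Pi_k^{\partial}(r_h^{c}|_F)$ is controlled via trace inequalities by the right-hand sides above, both $\|\delta^F\|_{0}$ and $h_K^{-1}\|\delta^F\|_{0}\gtrsim|\delta^F|_1$ inherit the same bound. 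Finally, add interior bubbles $\sum_K\delta_K$, $\delta_K\in\mathcal P_{k+\mathfrak n}(K)\cap H_0^1(K)$, chosen so that \eqref{pro3} holds for $r_h^{c}+\sum_F\delta^F+\sum_K\delta_K$; because these bubbles vanish on $\partial K$ they leave the face moments \eqref{pro4} and the global continuity untouched, and their $\mathrm{L}^2(K)$-moment map onto $\mathcal P_k(K)$ is again surjective with bounded right inverse, so $\|\delta_K\|_{0,K}$ is controlled by the element-moment defect of $r_h^{c}+\sum_F\delta^F$, itself bounded by $\|r_h-r_h^{c}\|_{0,K}+\sum_{F\subset\partial K}\|\delta^F\|_{0,K}$. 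Setting $\mathcal I_h(r_h,\widehat r_h):=r_h^{c}+\sum_F\delta^F+\sum_K\delta_K$, the identities \eqref{pro3}--\eqref{pro4} hold by construction, and \eqref{pro1}--\eqref{pro2} follow from the triangle inequality and \eqref{4}.

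The main obstacle is that the mesh elements are general (unions of a bounded number of star-shaped) polygons rather than simplices, so barycentric bubble functions are not directly available; I would handle this by fixing inside each $K$ a shape-regular sub-simplex (or sub-triangulation) of size comparable to $h_K$, building the interior and edge bubbles there, extending them by zero, and transferring the scaling estimates using the shape-regularity of $\mathcal T_h$ and the property $h_F\gtrsim h_K$. A second, more technical point is making the Karakashian--Pascal averaging estimate for $r_h^{c}$ rigorous on polygonal meshes with constants depending only on the shape-regularity and on $\mathfrak n$; this is where the inverse and trace inequalities of the preceding lemma are used. The detailed construction and all these estimates are carried out in \cite{LI201637}.
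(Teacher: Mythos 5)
The paper never proves this lemma at all: it is quoted as a known result, with the construction and all estimates deferred to \cite{LI201637} (and the boundary-face issue handled only in the subsequent Remark, which restricts to $\widehat r_h|_{\partial\Omega}=0$). So there is no in-paper argument to compare against; your averaging-plus-bubble-correction scheme (Oswald average, then edge bubbles for \eqref{pro4}, then interior bubbles for \eqref{pro3}, with \eqref{pro2}--\eqref{pro1} by triangle and inverse inequalities) is the standard route for such operators on simplicial meshes and the moment bookkeeping and scalings you describe are sound at that level.

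The genuine gap is in how you handle the polygonal elements, and it is precisely the point the lemma's codomain is about. Your fix --- build the edge and interior bubbles on a shape-regular sub-simplex (or sub-triangulation) of $K$ and extend by zero --- produces functions that are only \emph{piecewise} polynomial inside $K$; a polynomial that vanishes on an open subset of $K$ is identically zero, so such corrections cannot lie in $\mathcal{P}_{k+\mathfrak{n}}(\mathcal{T}_h)$ as defined (one polynomial of degree $k+\mathfrak n$ per element $K$). The same problem already affects your first layer: on a general polygonal mesh there is no Lagrange-node structure for $H_0^1(\Omega)\cap\mathcal P_k(\mathcal T_h)$, and that conforming per-polygon space can be too poor for an Oswald-type average, so your $r_h^c$ also implicitly lives on the sub-triangulation. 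The resulting operator would still be $H^1_0$-conforming and would satisfy \eqref{pro3}--\eqref{pro1} (which is all the paper actually uses later), but it does not prove the lemma as stated. To land in $\mathcal{P}_{k+\mathfrak n}(\mathcal{T}_h)$ one must build the bubbles out of whole-element polynomials, e.g.\ for convex elements products of the affine functions vanishing on the element's edges (matched across a shared face by multiplying each side by an extension of the other side's trace); this is exactly where the requirement that $\mathfrak n$ be ``large enough'' (depending on the maximal number of element edges) comes from, and non-convex star-shaped elements need additional care since those edge lines can cross the interior and destroy positivity of the bubble. You should either carry out that element-level construction or state explicitly that you prove the lemma with a sub-triangulated range, which suffices for every later use in the paper; finally note that with your choice of zero boundary values \eqref{pro4} can only hold on interior faces (or for $\widehat r_h|_{\partial\Omega}=0$), consistent with the paper's Remark but not with the literal statement.
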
 
\begin{remark}
	Actually, we can denote the operator $\mathcal{I}_h$ onto more specified spaces. For instance, we can let $\mathcal{I}_h$ onto $H_0^1(\Omega)\cap \mathcal{P}_{k+\mathfrak{n}}(\mathcal{T}_h)$, when $\widehat{r}_h|_{\partial\Omega}=0$. It trivially holds by \eqref{pro4}, which will be used for the argument of \Cref{theorem1}. Also, we can let $\mathcal{I}_h$ onto $\widehat{H}^1(\Omega)\cap \mathcal{P}_{k+\mathfrak{n}}(\mathcal{T}_h)$, when $r_h\in\mathrm{L}_0^2(\Omega)$. It holds by taking $\mu=1$ in \eqref{pro3}, which will be used for the argument of \Cref{mainlemma}.
\end{remark}
\begin{lemma}\label{lemma1}
	Let $(\bm L,r)$ be the solution of \eqref{stage1} and let $(\bm L_h,r_h,\widehat{r}_h)$ be the numerical solution of \eqref{discrete stage1}, then we have 
	\begin{equation}\label{errorprojection1}
		\begin{aligned}
			&\mathfrak{a}_h(\bm\Pi_{k-1}^o\bm L,\Pi_k^o r,\Pi_{k-1}^{\partial} r;\bm M_h,\mu_{h},\widehat{\mu}_h)\\
			&\qquad=(g,\mathcal{I}_h(\mu_h,\widehat{\mu}_h))_{\mathcal T_h}+(\bm\Pi_{k-1}^o\bm L-\bm L,\nabla\mathcal{I}_h(\mu_h,\widehat{\mu}_h))_{\mathcal T_h}\\
			&\qquad\qquad+\langle\alpha_1(\Pi_{k-1}^{\partial}\Pi_k^o r-\Pi_{k-1}^{\partial} r),\Pi_{k-1}^{\partial}\mu_h-\widehat \mu_h\rangle_{\partial\mathcal T_h}.
		\end{aligned}
	\end{equation}
\end{lemma}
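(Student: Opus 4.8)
The plan is to insert the projected exact solution $(\bm\Pi_{k-1}^o\bm L,\Pi_k^o r,\Pi_{k-1}^{\partial}r)$ into the first three slots of $\mathfrak{a}_h$ and process the resulting expression in three blocks, using the defining moment conditions of the $\mathrm{L}^2$ projections, the interpolation properties \eqref{pro3}--\eqref{pro4}, and the two continuous relations contained in \eqref{stage1} (namely $\bm L=\nabla r$ and $(\bm L,\nabla\mu)=(g,\mu)$ for all $\mu\in H^1_0(\Omega)$).

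\emph{Block 1 (the $\bm M_h$-terms).} First I would look at $(\bm\Pi_{k-1}^o\bm L,\bm M_h)_{\mathcal T_h}+(\Pi_k^o r,\nabla\cdot\bm M_h)_{\mathcal T_h}-\langle\Pi_{k-1}^{\partial}r,\bm M_h\cdot\bm n\rangle_{\partial\mathcal T_h}$. Since $\bm M_h|_K\in[\mathcal P^{k-1}(K)]^2$, $\nabla\cdot\bm M_h|_K\in\mathcal P^{k-2}(K)\subset\mathcal P^{k}(K)$, and $\bm M_h\cdot\bm n|_F\in\mathcal P^{k-1}(F)$, all three projections can be removed against these test functions, so this block equals $(\bm L,\bm M_h)_{\mathcal T_h}+(r,\nabla\cdot\bm M_h)_{\mathcal T_h}-\langle r,\bm M_h\cdot\bm n\rangle_{\partial\mathcal T_h}$. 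An element-wise integration by parts rewrites $(r,\nabla\cdot\bm M_h)_{\mathcal T_h}$ as $-(\nabla r,\bm M_h)_{\mathcal T_h}+\langle r,\bm M_h\cdot\bm n\rangle_{\partial\mathcal T_h}$; the boundary contribution cancels the $-\langle r,\bm M_h\cdot\bm n\rangle_{\partial\mathcal T_h}$ already present, and then $\bm L=\nabla r$ makes the whole block vanish.

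\emph{Block 2 (the $\mu_h,\widehat\mu_h$-terms), the main step.} These are $-(\nabla\cdot\bm\Pi_{k-1}^o\bm L,\mu_h)_{\mathcal T_h}+\langle\bm\Pi_{k-1}^o\bm L\cdot\bm n,\widehat\mu_h\rangle_{\partial\mathcal T_h}$, and here the interpolation operator enters. Set $v_h:=\mathcal I_h(\mu_h,\widehat\mu_h)$, which by the remark following the interpolation lemma lies in $H^1_0(\Omega)\cap\mathcal P_{k+\mathfrak n}(\mathcal T_h)$ because $\widehat\mu_h|_{\partial\Omega}=0$. Integrating $(\bm\Pi_{k-1}^o\bm L,\nabla v_h)_{\mathcal T_h}$ by parts element by element and applying \eqref{pro3} to $\nabla\cdot\bm\Pi_{k-1}^o\bm L\in\mathcal P^{k-2}\subset\mathcal P^{k}$ and \eqref{pro4} to $\bm\Pi_{k-1}^o\bm L\cdot\bm n\in\mathcal P^{k-1}\subset\mathcal P^{k}$ shows that $(\bm\Pi_{k-1}^o\bm L,\nabla v_h)_{\mathcal T_h}$ equals exactly this block. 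Then I would split $\bm\Pi_{k-1}^o\bm L=\bm L+(\bm\Pi_{k-1}^o\bm L-\bm L)$ and use the second relation in \eqref{stage1} with the admissible test function $v_h\in H^1_0(\Omega)$, i.e. $(\bm L,\nabla v_h)_{\mathcal T_h}=(g,v_h)_{\mathcal T_h}$, which turns the block into $(g,\mathcal I_h(\mu_h,\widehat\mu_h))_{\mathcal T_h}+(\bm\Pi_{k-1}^o\bm L-\bm L,\nabla\mathcal I_h(\mu_h,\widehat\mu_h))_{\mathcal T_h}$, precisely the first two terms on the right of \eqref{errorprojection1}.

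\emph{Block 3 (the stabilization term).} After substituting $\Pi_k^o r$ and $\Pi_{k-1}^{\partial}r$, the term $\langle\alpha_1(\Pi_{k-1}^{\partial}\Pi_k^o r-\Pi_{k-1}^{\partial}r),\Pi_{k-1}^{\partial}\mu_h-\widehat\mu_h\rangle_{\partial\mathcal T_h}$ is verbatim the third right-hand side term in \eqref{errorprojection1}, so summing the three blocks finishes the proof. The only genuinely delicate point is the bookkeeping in Block 2: one must check that every quantity tested against $v_h=\mathcal I_h(\mu_h,\widehat\mu_h)$ has element-degree at most $k$ and face-degree at most $k$ so that the moment identities \eqref{pro3}--\eqref{pro4} apply, and one must use the variant of $\mathcal I_h$ landing in $H^1_0(\Omega)$—legitimate exactly because $\widehat\mu_h$ vanishes on $\partial\Omega$—so that $v_h$ is an admissible test function in \eqref{stage1}. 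Everything else is routine integration by parts and $\mathrm{L}^2$-projection orthogonality.
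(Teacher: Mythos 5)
Your proposal is correct and follows essentially the same route as the paper: expand $\mathfrak{a}_h$, remove the $\mathrm{L}^2$ projections against the polynomial test data in the $\bm M_h$-block, swap $(\mu_h,\widehat\mu_h)$ for the conforming interpolant $\mathcal{I}_h(\mu_h,\widehat\mu_h)$ via the moment conditions \eqref{pro3}--\eqref{pro4}, integrate by parts, and invoke the continuous relations $\bm L=\nabla r$ and $(\bm L,\nabla v)=(g,v)$ for $v\in H_0^1(\Omega)$. The degree-counting and the use of the $H_0^1$-valued variant of $\mathcal{I}_h$ (legitimate since $\widehat\mu_h|_{\partial\Omega}=0$) are exactly the points the paper relies on as well.
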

\begin{proof}{\color{black} By the definition of $\mathfrak{a}_h$, we can obtain}
	\begin{align*}
		&\mathfrak{a}_h(\bm\Pi_{k-1}^o\bm L,\Pi_k^o r,\Pi_{k-1}^{\partial} r;\bm M_h,\mu_{h},\widehat{\mu}_h)\\
		&\qquad =(\bm\Pi_{k-1}^o\bm L,\bm M_h)_{\mathcal T_h}+(\Pi_k^o r,\nabla\cdot\bm M_h)_{\mathcal T_h}-\langle\Pi_{k-1}^{\partial}r,\bm M_h\cdot\bm n\rangle_{\partial\mathcal T_h}\\
		&\qquad\qquad-(\nabla\cdot\bm\Pi_{k-1}^o\bm L,\mu_h)_{\mathcal T_h}+\langle \bm\Pi_{k-1}^o\bm L\cdot\bm n,\widehat \mu_h\rangle_{\partial\mathcal T_h}\\
		&\qquad\qquad+\langle\alpha_1(\Pi_{k-1}^{\partial}\Pi_k^o r-\Pi_{k-1}^{\partial} r),\Pi_{k-1}^{\partial}\mu_h-\widehat \mu_h\rangle_{\partial\mathcal T_h}\\
		&\qquad=(\bm\Pi_{k-1}^o\bm L,\bm M_h)_{\mathcal T_h}+(\Pi_k^o r,\nabla\cdot\bm M_h)_{\mathcal T_h}-\langle\Pi_{k-1}^{\partial}r,\bm M_h\cdot\bm n\rangle_{\partial\mathcal T_h}\\
		&\qquad\qquad-(\nabla\cdot\bm\Pi_{k-1}^o\bm L,\mathcal{I}_h(\mu_h,\widehat{\mu}_h))_{\mathcal T_h}+\langle \bm\Pi_{k-1}^o\bm L\cdot\bm n,\mathcal{I}_h(\mu_h,\widehat{\mu}_h)\rangle_{\partial\mathcal T_h}\\
		&\qquad\qquad+\langle\alpha_1(\Pi_{k-1}^{\partial}\Pi_k^o r-\Pi_{k-1}^{\partial} r),\Pi_{k-1}^{\partial}\mu_h-\widehat \mu_h\rangle_{\partial\mathcal T_h}
	\end{align*}
	Applying the orthogonality of ${\rm L}^2$ projections and the properties of $\mathcal{I}_h$ \eqref{pro},
	\begin{align*}
		&\mathfrak{a}_h(\bm\Pi_{k-1}^o\bm L,\Pi_k^o r,\Pi_{k-1}^{\partial} r;\bm M_h,\mu_{h},\widehat{\mu}_h)\\
		&\qquad =(\bm L,\bm M_h)_{\mathcal T_h}+(r,\nabla\cdot\bm M_h)_{\mathcal T_h}-\langle r,\bm M_h\cdot\bm n\rangle_{\partial\mathcal T_h}\\
		&\qquad\qquad+(\bm L,\nabla\mathcal{I}_h(\mu_h,\widehat{\mu}_h))_{\mathcal T_h}+(\bm\Pi_{k-1}^o\bm L-\bm L,\nabla\mathcal{I}_h(\mu_h,\widehat{\mu}_h))_{\mathcal T_h}\\
		&\qquad\qquad+\langle\alpha_1(\Pi_{k-1}^{\partial}\Pi_k^o r-\Pi_{k-1}^{\partial} r),\Pi_{k-1}^{\partial}\mu_h-\widehat \mu_h\rangle_{\partial\mathcal T_h}.
	\end{align*}\par
	Then we use the relations of \eqref{discrete stage1}, we get
	\begin{align*}
		&\mathfrak{a}_h(\bm\Pi_{k-1}^o\bm L,\Pi_k^o r,\Pi_{k-1}^{\partial} r;\bm M_h,\mu_{h},\widehat{\mu}_h)\\
		&\qquad=(g,\mathcal{I}_h(\mu_h,\widehat{\mu}_h))_{\mathcal T_h}+(\bm\Pi_{k-1}^o\bm L-\bm L,\nabla\mathcal{I}_h(\mu_h,\widehat{\mu}_h))_{\mathcal T_h}\\
		&\qquad\qquad+\langle\alpha_1(\Pi_{k-1}^{\partial}\Pi_k^o r-\Pi_{k-1}^{\partial} r),\Pi_{k-1}^{\partial}\mu_h-\widehat \mu_h\rangle_{\partial\mathcal T_h}.
	\end{align*}
	This completes the proof.
\end{proof}\par
\begin{theorem}\label{theorem1}
	Under condition of  \Cref{lemma1} , it holds 
	\begin{equation}
		\|\bm L-\bm L_h\|_{\mathcal{T}_h}+\|\nabla r-\nabla r_h\|_{\mathcal{T}_h}\leq Ch^k(\|r\|_{k+1}+\|g\|_{k-1}).
	\end{equation}
\end{theorem}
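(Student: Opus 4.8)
The plan is to combine the discrete inf-sup stability from \eqref{discretelbb1}, the projection identity from \Cref{lemma1}, and the $\mathrm{L}^2$-projection approximation bounds of \eqref{L2projectionapproximation}, together with the interpolation properties \eqref{pro}, in a standard Strang-type argument. First I would set up the error splitting: write $\bm L-\bm L_h = (\bm L-\bm\Pi_{k-1}^o\bm L) + (\bm\Pi_{k-1}^o\bm L-\bm L_h)$ and $r-r_h = (r-\Pi_k^o r)+(\Pi_k^o r - r_h)$, and introduce the discrete error triple $(\bm E_h,e_h,\widehat e_h) := (\bm\Pi_{k-1}^o\bm L - \bm L_h,\ \Pi_k^o r - r_h,\ \Pi_{k-1}^\partial r - \widehat r_h) \in \mathcal{L}_h\times\mathcal{R}_h\times\widehat{\mathcal{R}}_h$. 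Subtracting the scheme \eqref{discrete stage1} from the identity \eqref{errorprojection1} (using linearity of $\mathfrak{a}_h$), I get
\begin{equation*}
	\mathfrak{a}_h(\bm E_h,e_h,\widehat e_h;\bm M_h,\mu_h,\widehat\mu_h) = (g,\mathcal{I}_h(\mu_h,\widehat\mu_h)-\mu_h)_{\mathcal T_h} + (\bm\Pi_{k-1}^o\bm L-\bm L,\nabla\mathcal{I}_h(\mu_h,\widehat\mu_h))_{\mathcal T_h} + \langle\alpha_1(\Pi_{k-1}^\partial\Pi_k^o r - \Pi_{k-1}^\partial r),\Pi_{k-1}^\partial\mu_h-\widehat\mu_h\rangle_{\partial\mathcal T_h}
\end{equation*}
for all test functions in the discrete space.

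Next I would bound the three terms on the right-hand side by $C\, h^k(\|r\|_{k+1}+\|g\|_{k-1})\,\|(\bm M_h,\mu_h,\widehat\mu_h)\|_{\mathfrak a_h}$. For the first term, I would use \eqref{pro3} to replace $\mathcal{I}_h(\mu_h,\widehat\mu_h)-\mu_h$ by $(\mathrm{id}-\Pi_{k'}^o)$-type orthogonality where possible; more directly, insert $\Pi_0^o g$ or $\Pi_{k-1}^o g$ and use \eqref{pro3} to kill the polynomial part, leaving $(g-\Pi^o g,\mathcal{I}_h(\mu_h,\widehat\mu_h)-\mu_h)$ which is estimated by \eqref{5} on $g$ (costing $h^{k-1}\|g\|_{k-1}$) and \eqref{pro1} on $\mathcal{I}_h(\mu_h,\widehat\mu_h)-\mu_h$ (costing another $h$ times the $\mathfrak a_h$-norm of the test triple via \eqref{pro2}), giving the required $h^k$. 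For the second term, \eqref{5} gives $\|\bm\Pi_{k-1}^o\bm L - \bm L\|_{\mathcal T_h}\le C h^k|\bm L|_k = Ch^k|r|_{k+1}$ (since $\bm L=\nabla r$), and \eqref{pro2} controls $\|\nabla\mathcal{I}_h(\mu_h,\widehat\mu_h)\|_{\mathcal T_h}$ by the $\mathfrak a_h$-norm. For the third (stabilization) term, I would use $\|\alpha_1^{1/2}(\Pi_{k-1}^\partial\Pi_k^o r - \Pi_{k-1}^\partial r)\|_{\partial\mathcal T_h}$; writing $\Pi_{k-1}^\partial\Pi_k^o r - \Pi_{k-1}^\partial r = \Pi_{k-1}^\partial(\Pi_k^o r - r)$ and using boundedness \eqref{2} of $\Pi_{k-1}^\partial$, then the trace/approximation bound \eqref{3} (or \eqref{6} combined with \eqref{5}) on $r-\Pi_k^o r$, together with $\alpha_1|_K = h_K^{-1}$, yields $C h^k\|r\|_{k+1}$; pairing with $\|\alpha_1^{1/2}(\Pi_{k-1}^\partial\mu_h-\widehat\mu_h)\|_{\partial\mathcal T_h}\le\|(\bm M_h,\mu_h,\widehat\mu_h)\|_{\mathfrak a_h}$ closes it. Then \eqref{discretelbb1} applied to $(\bm E_h,e_h,\widehat e_h)$ gives $\|(\bm E_h,e_h,\widehat e_h)\|_{\mathfrak a_h}\le Ch^k(\|r\|_{k+1}+\|g\|_{k-1})$, so in particular $\|\bm E_h\|_{\mathcal T_h}$ and $\|\nabla e_h\|_{\mathcal T_h}$ are bounded by the same quantity.

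Finally I would assemble the result by the triangle inequality: $\|\bm L-\bm L_h\|_{\mathcal T_h}\le\|\bm L-\bm\Pi_{k-1}^o\bm L\|_{\mathcal T_h}+\|\bm E_h\|_{\mathcal T_h}$, where the first piece is $Ch^k|r|_{k+1}$ by \eqref{5}; and $\|\nabla r-\nabla r_h\|_{\mathcal T_h}\le\|\nabla(r-\Pi_k^o r)\|_{\mathcal T_h}+\|\nabla e_h\|_{\mathcal T_h}$, where the first piece is $Ch^k|r|_{k+1}$ by \eqref{5} with $s=1$. Adding these gives the claimed bound. I expect the main obstacle to be the careful handling of the consistency term $(g,\mathcal{I}_h(\mu_h,\widehat\mu_h)-\mu_h)_{\mathcal T_h}$: one must exploit the moment condition \eqref{pro3} to gain the full order $h^k$ rather than only $h^{k-1}$, since $\mathcal{I}_h(\mu_h,\widehat\mu_h)-\mu_h$ is not small in $\mathrm{L}^2$ on its own — the factor $h$ comes from \eqref{pro1}, and the remaining $h^{k-1}$ must come from approximating $g$ by a polynomial of degree $k-1$ against which $\mathcal{I}_h(\mu_h,\widehat\mu_h)-\mu_h$ is (nearly) orthogonal; getting the bookkeeping of which projection degree is used where exactly right is the delicate point.
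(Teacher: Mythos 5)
Your proposal follows essentially the same route as the paper: the same projection-based error triple $(\bm\Pi_{k-1}^o\bm L-\bm L_h,\ \Pi_k^o r-r_h,\ \Pi_{k-1}^{\partial}r-\widehat r_h)$, the same error equation obtained by subtracting \eqref{discrete stage1} from the identity \eqref{errorprojection1}, the same three-term estimation (using the moment property \eqref{pro3} to insert $\Pi_{k-1}^o g$ and gain the extra power of $h$ via \eqref{pro1}, plus \eqref{pro2}, \eqref{2}, \eqref{3}, \eqref{5} for the other two terms), and the same conclusion via the inf-sup bound \eqref{discretelbb1} and the triangle inequality. The estimates and bookkeeping you outline match the paper's proof of \Cref{theorem1}.
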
\par
\begin{proof}
	We introduce the follow expressions to make the argument more concise:
	\begin{align*} \color{black}
		\bm \xi_{\bm L}:= \bm \Pi_{k-1}^o  \bm L-\bm L_h,
		\qquad\xi_{r}:= \Pi_k^o r-r_h,
		\qquad\xi_{\widehat r}:= \Pi_{k-1}^{\partial} r-\widehat{r}_h.
	\end{align*}
	Then, we get                   
	\begin{align*}
		&\mathfrak{a}_h(\bm\xi_{\bm L},\xi_{r},\widehat{\xi}_r;\bm M_h,\mu_{h},\widehat{\mu}_h)\\
		&\qquad=(g,\mathcal{I}_h(\mu_h,\widehat{\mu}_h)-\mu_h)_{\mathcal T_h}+(\bm\Pi_{k-1}^o\bm L-\bm L,\nabla\mathcal{I}_h(\mu_h,\widehat{\mu}_h))_{\mathcal T_h}\\
		&\qquad\qquad+\langle\alpha_1(\Pi_{k-1}^{\partial}\Pi_k^o r-\Pi_{k-1}^{\partial} r),\Pi_{k-1}^{\partial}\mu_h-\widehat \mu_h\rangle_{\partial\mathcal T_h}\\
		&\qquad:=E_1+E_2+E_3.
	\end{align*}\par
	For the first term, we have the following result by Cauchy-Schwartz’s inequality and \eqref{3} and the properties of $\mathcal{I}_h$ \eqref{pro1}
	\begin{align*}
		E_1&=(g-\Pi^o_{k-1}g,\mathcal{I}_h(\mu_h,\widehat{\mu}_h)-\mu_h)_{\mathcal T_h}\\
		&\leq Ch^{k}\|g\|_{k-1}(\|\nabla \mu_h\|_{\mathcal{T}_h}+\|\mathfrak{h}^{-1/2}(\mu_h-\widehat{\mu}_h)\|_{\partial\mathcal{T}_h})\\
		&\leq Ch^{k}\|g\|_{k-1}(\|\nabla \mu_h\|_{\mathcal{T}_h}+\|\alpha_1^{1/2}(\Pi_{k-1}^{\partial}\mu_h-\widehat{\mu}_h)\|_{\partial\mathcal{T}_h}).
	\end{align*}
	For the second term, we use the same argument as $E_1$ with the properties of $\mathcal{I}_h$ \eqref{pro2}
	\begin{align*}
		E_2&\leq Ch^k\|\bm L\|_k(\|\nabla \mu_h\|_{\mathcal{T}_h}+\|\alpha_1^{1/2}(\Pi_{k-1}^{\partial}\mu_h-\widehat{\mu}_h)\|_{\partial\mathcal{T}_h})
	\end{align*}
	For the third term, we obtain by Cauchy-Schwartz’s inequality, \eqref{2} and \eqref{5}:
	\begin{align*}
		E_3&\leq \|\alpha_1^{1/2}(\Pi_{k-1}^{\partial}\Pi_k^o r-\Pi_{\ell}^{\partial} r)\|_{\partial\mathcal{T}_h}\|\alpha_1^{1/2}(\Pi_{k-1}^{\partial}\mu_h-\widehat \mu_h)\|_{\partial\mathcal{T}_h}\\
		&\leq C h^k\|r\|_{k+1}\|\alpha_1^{\frac{1}{2}}(\Pi_{k-1}^{\partial}\mu_h-\widehat \mu_h)\|_{\partial\mathcal{T}_h}.
	\end{align*}\par
	Then, using the \eqref{discretelbb1}, we obtain the boundness for $\|(\bm\xi_{\bm L},\xi_{r},\widehat{\xi}_r)\|_{\mathfrak{a}_h}$:
	\begin{align*}
		\|(\bm\xi_{\bm L},\xi_{r},\widehat{\xi}_r)\|_{\mathfrak{a}_h}\leq Ch^k(\|r\|_{k+1}+\|g\|_{k-1}).
	\end{align*}
	Finally, after using the triangle inequality, we finish the proof.
\end{proof}\par
\subsection{Prior Error for Step Two}
Now, we mainly consider the second problem \eqref{discrete stage2}. Again, we introduce a norm that appears naturally in the analysis.\par
Let $(\bm\sigma,\bm R,\bm \theta, p) $ be the solution of \eqref{stage2} and let $(\bm\sigma_h,\bm R_h,\bm\theta_h,\widehat{\bm \theta}_h,p_h,\widehat{p}_h)$ be the numerical solution of \eqref{discrete stage2}.\par
We first introduce the HDG-Korn's inequality and the HDG-Poincar\'e’s inequality, see Lemma 3.3 of \cite{Chen1}.
\begin{lemma}[HDG-Poincar\'e’s Inequality]
	For any $\bm\theta_h\in \mathcal{Y}_h, \widehat{\bm\theta}_h\in\widehat{\mathcal{Y}}_h$, it holds
	\begin{equation}\label{HDGpoincare}
		\|\bm\theta_h\|^2_{\mathcal{T}_h}\leq C(\|\nabla\bm\theta_h\|^2_{\mathcal{T}_h}+\|\mathfrak{h}^{-\frac{1}{2}}(\bm\Pi_{\ell}^{\partial}\bm{\theta}_h-\widehat {\bm\theta}_h)\|^2_{\partial\mathcal{T}_h}).
	\end{equation}
\end{lemma}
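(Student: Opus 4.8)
The plan is to compare $\bm\theta_h$ with a conforming interpolant and invoke the continuous Poincaré inequality. Set $\bm\psi_h:=\bm{\mathcal{I}}_h(\bm\theta_h,\widehat{\bm\theta}_h)$. Since $\widehat{\bm\theta}_h|_{\partial\Omega}=\bm 0$ by the definition of $\widehat{\mathcal{Y}}_h$, the Remark following the interpolation lemma ensures $\bm\psi_h\in[H_0^1(\Omega)]^2$. First I would split $\|\bm\theta_h\|_{\mathcal{T}_h}\leq\|\bm\theta_h-\bm\psi_h\|_{\mathcal{T}_h}+\|\bm\psi_h\|_0$, bound the first term by \eqref{pro1}, and bound the second by the continuous Poincaré inequality on $[H_0^1(\Omega)]^2$ together with \eqref{pro2}, using that $\bm\psi_h$ is globally $H^1$ so that $\|\nabla\bm\psi_h\|_0=\|\nabla\bm\psi_h\|_{\mathcal{T}_h}$. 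Since $h$ is uniformly bounded this gives
\[
\|\bm\theta_h\|_{\mathcal{T}_h}\leq C\bigl(\|\nabla\bm\theta_h\|_{\mathcal{T}_h}+\|\mathfrak{h}^{-1/2}(\bm\theta_h-\widehat{\bm\theta}_h)\|_{\partial\mathcal{T}_h}\bigr).
\]

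It then remains to trade the raw jump $\bm\theta_h-\widehat{\bm\theta}_h$ on the skeleton for the stabilization jump $\bm\Pi_\ell^{\partial}\bm\theta_h-\widehat{\bm\theta}_h$ appearing in the statement. By the triangle inequality,
\[
\|\mathfrak{h}^{-1/2}(\bm\theta_h-\widehat{\bm\theta}_h)\|_{\partial\mathcal{T}_h}\leq\|\mathfrak{h}^{-1/2}(\bm\theta_h-\bm\Pi_\ell^{\partial}\bm\theta_h)\|_{\partial\mathcal{T}_h}+\|\mathfrak{h}^{-1/2}(\bm\Pi_\ell^{\partial}\bm\theta_h-\widehat{\bm\theta}_h)\|_{\partial\mathcal{T}_h}.
\]
Because $\max(1,k-1)\leq\ell\leq k$, the first summand either vanishes (when $\ell=k$, since $\bm\theta_h|_F\in[\mathcal{P}^k(F)]^2$ and hence $\bm\Pi_k^{\partial}\bm\theta_h=\bm\theta_h$ on each $F$) or, when $\ell=k-1$, is controlled componentwise by \eqref{3} with $m=1$ and $j=\ell$, which yields $\|\bm\theta_h-\bm\Pi_\ell^{\partial}\bm\theta_h\|_{\partial K}\leq Ch_K^{1/2}|\bm\theta_h|_{1,K}$; multiplying by $h_K^{-1/2}$, squaring, and summing over $K\in\mathcal{T}_h$ bounds this term by $C\|\nabla\bm\theta_h\|_{\mathcal{T}_h}$. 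Substituting back and squaring gives the asserted inequality.

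The bulk of the argument rests on the two properties \eqref{pro1}, \eqref{pro2} of the interpolation operator and the standard Poincaré inequality, all of which are available; the only step needing genuine care is the last paragraph, namely absorbing the mismatch between $\bm\theta_h-\widehat{\bm\theta}_h$ and $\bm\Pi_\ell^{\partial}\bm\theta_h-\widehat{\bm\theta}_h$ into the gradient term via the trace--projection estimate \eqref{3}. Since every constant produced is independent of $h$ and $t$, the claimed inequality follows.
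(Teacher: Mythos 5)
Your proof is correct, but it does not follow the paper, because the paper never proves this lemma at all: it simply imports it by citing Lemma 3.3 of \cite{Chen1} (together with the HDG--Korn inequality). Your argument is a legitimate self-contained alternative built entirely from tools already stated in the paper: the conforming interpolant $\bm{\mathcal I}_h(\bm\theta_h,\widehat{\bm\theta}_h)\in[H_0^1(\Omega)]^2$ with \eqref{pro1}--\eqref{pro2}, the continuous Poincar\'e inequality on $[H_0^1(\Omega)]^2$, and the trace--projection estimate \eqref{3} with $m=1$ to absorb $\|\mathfrak h^{-1/2}(\bm\theta_h-\bm\Pi_\ell^{\partial}\bm\theta_h)\|_{\partial\mathcal T_h}$ into $\|\nabla\bm\theta_h\|_{\mathcal T_h}$ (and the observation that this term vanishes when $\ell=k$, since edges are straight so the trace of $\bm\theta_h$ already lies in $[\mathcal P^k(F)]^2$). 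All steps check out: \eqref{pro1}--\eqref{pro2} produce the raw jump $\bm\theta_h-\widehat{\bm\theta}_h$, and your final triangle-inequality step is exactly the right bridge to the projected jump $\bm\Pi_\ell^{\partial}\bm\theta_h-\widehat{\bm\theta}_h$ appearing in \eqref{HDGpoincare}, with constants independent of $h$ and $t$. What the two routes buy is different: the paper's citation keeps the exposition short but leaves the inequality's provenance external (the cited proof rests on discrete Poincar\'e--Friedrichs-type results for piecewise $H^1$ functions), whereas your enriching-operator argument makes the result self-contained within the framework already set up for \Cref{theorem1}, at the cost of a paragraph of extra work and of relying on the stated (but not fully justified in the paper) mapping property $\bm{\mathcal I}_h\in[H_0^1(\Omega)]^2$ when $\widehat{\bm\theta}_h|_{\partial\Omega}=\bm 0$.
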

\begin{lemma}[HDG-Korn’s Inequality]
	For any $\bm\theta_h\in \mathcal{Y}_h,\widehat{\bm\theta}_h\in\widehat{\mathcal{Y}}_h$ and sufficiently small $h$, it holds
	\begin{equation}\label{HDGkorn}
		\|\nabla\bm\theta_h\|^2_{\mathcal{T}_h}\leq C(\|\bm{\epsilon}(\bm\theta_h)\|^2_{\mathcal{T}_h}+\|\mathfrak{h}^{-\frac{1}{2}}(\bm\Pi_{\ell}^{\partial}\bm{\theta}_h-\widehat {\bm\theta}_h)\|^2_{\partial\mathcal{T}_h}).
	\end{equation}
\end{lemma}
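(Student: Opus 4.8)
The plan is to imitate the standard strategy for proving broken Korn inequalities for DG/HDG spaces: combine the classical Korn inequality on $H^1$ with a conforming approximation obtained from the interpolation operator $\bm{\mathcal I}_h$ of the preceding lemma, and control the remainder with an inverse inequality. Concretely, set $\bm v_h := \bm{\mathcal I}_h(\bm\theta_h,\widehat{\bm\theta}_h) \in [H_0^1(\Omega)]^2 \cap [\mathcal P_{k+\mathfrak n}(\mathcal T_h)]^2$, the conforming interpolant. Writing $\bm\theta_h = \bm v_h + (\bm\theta_h - \bm v_h)$ and using the triangle inequality elementwise,
\begin{equation*}
	\|\nabla\bm\theta_h\|_{\mathcal T_h} \le \|\nabla\bm v_h\|_{\mathcal T_h} + \|\nabla(\bm\theta_h-\bm v_h)\|_{\mathcal T_h}.
\end{equation*}
Since $\bm v_h \in [H_0^1(\Omega)]^2$, the classical Korn inequality on $\Omega$ gives $\|\nabla\bm v_h\|_{\mathcal T_h} = \|\nabla\bm v_h\|_0 \le C\|\bm\epsilon(\bm v_h)\|_0 \le C\|\bm\epsilon(\bm v_h)\|_{\mathcal T_h}$, and then $\|\bm\epsilon(\bm v_h)\|_{\mathcal T_h} \le \|\bm\epsilon(\bm\theta_h)\|_{\mathcal T_h} + \|\bm\epsilon(\bm\theta_h-\bm v_h)\|_{\mathcal T_h} \le \|\bm\epsilon(\bm\theta_h)\|_{\mathcal T_h} + \|\nabla(\bm\theta_h-\bm v_h)\|_{\mathcal T_h}$. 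So everything reduces to bounding $\|\nabla(\bm\theta_h-\bm v_h)\|_{\mathcal T_h}$.

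For that remainder term I would use the inverse inequality \eqref{4} (note $\bm\theta_h - \bm v_h$ is piecewise polynomial of bounded degree on each $K$) to pass from $\|\nabla(\bm\theta_h-\bm v_h)\|_K$ to $Ch_K^{-1}\|\bm\theta_h-\bm v_h\|_K$, i.e. $\|\nabla(\bm\theta_h-\bm v_h)\|_{\mathcal T_h} \le C\|\mathfrak h^{-1}(\bm\theta_h - \bm v_h)\|_{\mathcal T_h}$, and then invoke the interpolation estimate \eqref{pro1} for $\bm{\mathcal I}_h$, which yields
\begin{equation*}
	\|\bm\theta_h - \bm v_h\|_{\mathcal T_h} \le Ch\bigl(\|\nabla\bm\theta_h\|_{\mathcal T_h} + \|\mathfrak h^{-1/2}(\bm\Pi_\ell^\partial\bm\theta_h - \widehat{\bm\theta}_h)\|_{\partial\mathcal T_h}\bigr).
\end{equation*}
Combining, $\|\nabla(\bm\theta_h-\bm v_h)\|_{\mathcal T_h} \le C\bigl(\|\nabla\bm\theta_h\|_{\mathcal T_h} + \|\mathfrak h^{-1/2}(\bm\Pi_\ell^\partial\bm\theta_h - \widehat{\bm\theta}_h)\|_{\partial\mathcal T_h}\bigr)$, but the constant $C$ here is absolute (from \eqref{4} and \eqref{pro1}) while we have picked up a factor $h$ that I have used to cancel the $h^{-1}$ — wait, more carefully: the $h\cdot h^{-1}$ gives an $h$-independent constant $C$, so this bound does \emph{not} by itself absorb the $\|\nabla\bm\theta_h\|_{\mathcal T_h}$ term on the left. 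This is the crux.

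\textbf{The main obstacle} is precisely this: the naive chain produces $\|\nabla\bm\theta_h\|_{\mathcal T_h} \le C\|\bm\epsilon(\bm\theta_h)\|_{\mathcal T_h} + C\|\nabla(\bm\theta_h-\bm v_h)\|_{\mathcal T_h}$ with $\|\nabla(\bm\theta_h-\bm v_h)\|_{\mathcal T_h} \le C_0\|\nabla\bm\theta_h\|_{\mathcal T_h} + C_0\|\mathfrak h^{-1/2}(\cdots)\|_{\partial\mathcal T_h}$, and $C\cdot C_0$ need not be less than $1$, so we cannot simply absorb. The resolution — and the reason the statement includes the hypothesis ``for sufficiently small $h$'' — is to argue by contradiction/compactness: suppose the inequality fails; then there is a sequence $h_n \to 0$ and $(\bm\theta_n,\widehat{\bm\theta}_n)$ with $\|\nabla\bm\theta_n\|_{\mathcal T_{h_n}} = 1$ but $\|\bm\epsilon(\bm\theta_n)\|_{\mathcal T_{h_n}}^2 + \|\mathfrak h^{-1/2}(\bm\Pi_\ell^\partial\bm\theta_n - \widehat{\bm\theta}_n)\|_{\partial\mathcal T_{h_n}}^2 \to 0$. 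The conforming interpolants $\bm v_n$ are then bounded in $[H_0^1(\Omega)]^2$ (by \eqref{pro2}), hence a subsequence converges weakly in $H^1$ and strongly in $L^2$ to some $\bm v$; using \eqref{pro1} and the inverse estimate one shows $\bm\theta_n - \bm v_n \to 0$, $\bm\epsilon(\bm v) = 0$, so $\bm v$ is an infinitesimal rigid motion vanishing on $\partial\Omega$, i.e. $\bm v = \bm 0$; but then $\|\nabla\bm v_n\|_0 \to 0$ and $\|\nabla(\bm\theta_n - \bm v_n)\|_{\mathcal T_{h_n}} \to 0$ would force $\|\nabla\bm\theta_n\|_{\mathcal T_{h_n}} \to 0$, contradicting the normalization. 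I expect the authors to either run this compactness argument (as in analogous DG-Korn proofs) or, alternatively, cite Lemma 3.3 of \cite{Chen1} directly since the statement is quoted from there; in the write-up I would present the contradiction argument as the cleanest route, flagging that the only genuinely new ingredient beyond classical Korn is the interaction of $\bm{\mathcal I}_h$'s estimates \eqref{pro1}--\eqref{pro2} with the inverse inequality \eqref{4}.
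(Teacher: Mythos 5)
The paper does not actually prove this lemma: it is quoted verbatim from Lemma 3.3 of \cite{Chen1}, so the ``paper's proof'' is a citation, and your fallback option (cite \cite{Chen1}) is exactly what the authors do. The issue is with the self-contained argument you propose as the ``cleanest route''. Your compactness step has a genuine gap at its conclusion: from $\bm v_n\rightharpoonup \bm v=\bm 0$ weakly in $[H_0^1(\Omega)]^2$ you cannot conclude $\|\nabla\bm v_n\|_0\to 0$ (weak convergence only gives lower semicontinuity of the norm, which points the wrong way), and the claim $\|\nabla(\bm\theta_n-\bm v_n)\|_{\mathcal T_{h_n}}\to 0$ is never established -- your own estimates only give that this quantity is \emph{bounded} by an $h$-independent constant, since the factor $h$ from \eqref{pro1} is exactly cancelled by the inverse inequality \eqref{4}. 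This is the same obstruction you correctly identified in the direct absorption attempt, and passing to a sequence $h_n\to 0$ does not remove it: the troublesome term $\nabla(\bm\theta_n-\bm v_n)$ is a top-order quantity living at the finest mesh scales, so there is no compactness to exploit, and the contradiction with $\|\nabla\bm\theta_n\|_{\mathcal T_{h_n}}=1$ is never reached. (A smaller technical point: \eqref{pro1}--\eqref{pro2} involve $\mathfrak h^{-1/2}(\bm\theta_h-\widehat{\bm\theta}_h)$ on faces, whereas \eqref{HDGkorn} only contains $\bm\Pi_\ell^{\partial}\bm\theta_h-\widehat{\bm\theta}_h$; the difference $\bm\theta_h-\bm\Pi_\ell^{\partial}\bm\theta_h$ is again only bounded by $C\|\nabla\bm\theta_h\|_{\mathcal T_h}$, which feeds the same circularity.)

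A correct self-contained proof needs a different idea than ``conforming interpolant plus classical Korn''. The standard route is the piecewise Korn inequality of S.~C.~Brenner (Math.\ Comp.\ 73 (2004), 1067--1087): one subtracts from $\bm\theta_h$ a local infinitesimal rigid motion $\bm\rho_K$ on each element (note $\bm\rho_K\in[\mathcal P_1(K)]^2$), uses the element-level Korn inequality modulo rigid motions $\|\nabla(\bm\theta_h-\bm\rho_K)\|_K\le C\|\bm\epsilon(\bm\theta_h)\|_K$ with a scaling-uniform constant on star-shaped elements, and controls the inter-element variation of the $\bm\rho_K$'s by the face jump terms, which for this HDG setting reduce to $\mathfrak h^{-1/2}(\bm\Pi_\ell^{\partial}\bm\theta_h-\widehat{\bm\theta}_h)$ because rigid motions have degree one and $\ell\ge 1$; the homogeneous condition $\widehat{\bm\theta}_h=\bm 0$ on $\partial\Omega$ then eliminates the global rigid mode. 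That argument requires no ``sufficiently small $h$'' and no compactness over the mesh family. As written, your proposal should either be replaced by this rigid-mode decomposition or simply by the citation of Lemma 3.3 of \cite{Chen1}, as in the paper.
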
\par
We next introduce a semi-norm in the analysis of our method,  $\|\cdot\|_{\mathfrak{b}_h}: \Sigma_h \times \mathcal{S}_h\times \mathcal{Y}_h \times \widehat{\mathcal{Y}}_h\times\mathcal{P}_h\times\widehat{\mathcal{P}}_h \rightarrow \mathbb{R}$. Furthermore, we clarify that $\|\cdot\|_{\mathfrak{b}_h}$ is indeed a norm,  then state the priori error in the norm $\|\cdot\|_{\mathfrak{b}_h}$.
\begin{lemma}
	For all $(\bm\sigma_h,\bm R_h,\bm\theta_h,\widehat{\bm\theta}_h,p_h,\widehat p_h) \in \Sigma_h \times \mathcal{S}_h\times \mathcal{Y}_h \times \widehat{\mathcal{Y}}_h\times\mathcal{P}_h\times\widehat{\mathcal{P}}_h$, the definition of 
	\begin{equation}\label{norm}
		\begin{aligned}
			\|(\bm\sigma_h,\bm R_h,\bm\theta_h,\widehat{\bm\theta}_h,p_h,\widehat p_h)\|^2_{\mathfrak{b}_h}&=\|\bm \sigma_h\|^2_{\mathcal{T}_h}+t^{-2}\|\bm R_h\|^2_{\mathcal{T}_h}+\|\nabla\bm\theta_h\|^2_{\mathcal{T}_h}+\|\alpha_2^\frac{1}{2}(\bm\Pi_{\ell}^{\partial}\bm{\theta}_h-\widehat {\bm\theta}_h)\|^2_{\partial\mathcal{T}_h}\\
			&\qquad+t^2\|\nabla^{\perp}p_h\|^2_{\mathcal{T}_h}+\|\alpha_3^\frac{1}{2}(\Pi_{k-1}^{\partial} p_h-\widehat{p}_h)\|^2_{\partial\mathcal{T}_h},
		\end{aligned}
	\end{equation}
	is a norm for spaces $\Sigma_h \times \mathcal{S}_h\times \mathcal{Y}_h \times \widehat{\mathcal{Y}}_h\times\mathcal{P}_h\times\widehat{\mathcal{P}}_h$.
\end{lemma}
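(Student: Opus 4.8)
The plan is to check the three defining properties of a norm, observing that only definiteness requires real work. Non‑negativity is immediate from the definition \eqref{norm} as a sum of squares. For homogeneity and the triangle inequality, note that each of the six terms under the square root is the square of a seminorm on the corresponding product space: the $\mathrm L^2$–norms of $\bm\sigma_h$ and of $t^{-1}\bm R_h$, the broken seminorm $\|\nabla\bm\theta_h\|_{\mathcal T_h}$, the weighted jump seminorm $\|\alpha_2^{1/2}(\bm\Pi_{\ell}^{\partial}\bm\theta_h-\widehat{\bm\theta}_h)\|_{\partial\mathcal T_h}$, the broken seminorm $\|t\nabla^{\perp}p_h\|_{\mathcal T_h}$, and the weighted jump seminorm $\|\alpha_3^{1/2}(\Pi_{k-1}^{\partial}p_h-\widehat p_h)\|_{\partial\mathcal T_h}$. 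Thus $\|\cdot\|_{\mathfrak b_h}$ is the Euclidean norm in $\mathbb R^6$ of the vector formed by these seminorms, so it inherits absolute homogeneity and subadditivity. It therefore remains to show that $\|(\bm\sigma_h,\bm R_h,\bm\theta_h,\widehat{\bm\theta}_h,p_h,\widehat p_h)\|_{\mathfrak b_h}=0$ forces every component to vanish.

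So assume the expression vanishes; then each of the six terms vanishes separately. Since $t\in(0,1]$, the first two terms give at once $\bm\sigma_h=\bm 0$ and $\bm R_h=\bm 0$. For the rotation, $\|\nabla\bm\theta_h\|_{\mathcal T_h}=0$ together with $\|\alpha_2^{1/2}(\bm\Pi_{\ell}^{\partial}\bm\theta_h-\widehat{\bm\theta}_h)\|_{\partial\mathcal T_h}=0$ and the choice $\alpha_2|_K=h_K^{-1}=\mathfrak h^{-1}|_K$ make the right‑hand side of the HDG–Poincar\'e inequality \eqref{HDGpoincare} equal to zero, so $\bm\theta_h=\bm 0$; substituting this back into the vanishing jump term and using $\alpha_2>0$ yields $\widehat{\bm\theta}_h=\bm\Pi_{\ell}^{\partial}\bm\theta_h=\bm 0$ on $\mathcal F_h$.

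The remaining — and least automatic — step concerns $p_h$ and $\widehat p_h$, for which no Poincar\'e‑type inequality is available because $\widehat{\mathcal P}_h$ carries no boundary constraint, so I would argue directly. Since $t>0$, $\|t\nabla^{\perp}p_h\|_{\mathcal T_h}=0$ forces $\nabla p_h=\bm 0$ on each $K$, i.e. $p_h$ is piecewise constant; then, as $\alpha_3|_K=h_K+t^2h_K^{-1}>0$, the vanishing of $\|\alpha_3^{1/2}(\Pi_{k-1}^{\partial}p_h-\widehat p_h)\|_{\partial\mathcal T_h}$ gives $\Pi_{k-1}^{\partial}p_h=\widehat p_h$ on $\partial K$ for every $K$. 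On an interior face shared by $K^+$ and $K^-$, the projection of the constant trace is that constant, so $p_h|_{K^+}=\widehat p_h=p_h|_{K^-}$; since $\Omega$ is connected, $p_h$ is a global constant, and the normalization $p_h\in\mathrm L_0^2(\Omega)$ then forces $p_h=0$, after which $\widehat p_h=\Pi_{k-1}^{\partial}p_h=0$ on all of $\mathcal F_h$, boundary faces included. Collecting the three steps shows that all six components vanish, completing the proof. I expect this last step — keeping track of which hatted spaces impose homogeneous boundary values (as $\widehat{\mathcal Y}_h$ does) versus relying instead on the zero‑mean normalization of $\mathcal P_h$ — to be the only place requiring care; the rest is bookkeeping.
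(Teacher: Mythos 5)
Your proposal is correct and follows essentially the same route as the paper: reduce everything to definiteness, kill $\bm\sigma_h$ and $\bm R_h$ directly, and for $(p_h,\widehat p_h)$ use piecewise constancy, reproduction of constants by $\Pi_{k-1}^{\partial}$, single-valuedness of $\widehat p_h$ across interior faces, connectedness of $\Omega$, and the zero-mean constraint of $\mathcal P_h$, exactly as the paper does. The only (harmless) variation is that for $(\bm\theta_h,\widehat{\bm\theta}_h)$ you invoke the HDG--Poincar\'e inequality \eqref{HDGpoincare}, whereas the paper argues elementarily that $\bm\theta_h$ is piecewise constant and propagates the homogeneous boundary condition of $\widehat{\mathcal Y}_h$ through the faces; both arguments are valid.
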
\par
\begin{proof}
	It is enough to show \eqref{norm} is a norm for spaces $\Sigma_h \times \mathcal{S}_h\times \mathcal{Y}_h \times \widehat{\mathcal{Y}}_h\times\mathcal{P}_h\times\widehat{\mathcal{P}}_h$, assuming that $\|(\bm\sigma_h,\bm R_h,\bm\theta_h,\widehat{\bm\theta}_h,p_h,\widehat p_h)\|_{\mathfrak{b}_h}=0$,  that we show  $\bm\sigma_h,\bm R_h,\bm\theta_h,\widehat{\bm\theta}_h,p_h,\widehat p_h$ vanish. Thanks to \eqref{norm}, we only need to show $\bm\theta_h,\widehat{\bm\theta}_h,p_h,\widehat p_h$ vanish because $\|\bm \sigma_h\|_{\mathcal{T}_h}=0, \|\bm R_h\|_{\mathcal{T}_h}=0$ directly imply $\bm \sigma_h=\bm 0, \bm R_h=\bm 0$ for $(\bm{\sigma}_h,\bm R_h)\in\Sigma_h \times \mathcal{S}_h$.\par
	It follows readily that $\nabla\bm\theta_h=\bm 0, \nabla^{\perp}p_h=\bm 0 $ in $\mathcal T_h$ and $\bm\Pi_{\ell}^{\partial}\bm{\theta}_h=\widehat {\bm\theta}_h, \Pi_{k-1}^{\partial} p_h=\widehat{p}_h$ on $\partial\mathcal T_h$. So $\bm\theta_h$ is constant in $\mathcal{T}_h$ and $\bm\Pi_{\ell}^{\partial}\bm\theta_h=\bm 0$ on $\partial \Omega$ because $\widehat{\bm\theta}_h = \bm 0$ on $\partial \Omega$. We can conclude that $\bm\theta_h=\bm 0$ in $\mathcal T_h$ and $\widehat{\bm\theta}_h=\bm 0$ on $\partial\mathcal T_h$. For the scaler variable $p_h,\widehat{p}_h$, we note that $p_h\in\mathrm{L}_0^2(\Omega )$. After a simple integration combining that $p_h$ is also constant in $\mathcal{T}_h$, now reads $p_h=0$ in $\mathcal{T}_h$ and $\widehat{p}_h=0$ on $\partial\mathcal T_h$. This completes the proof.
\end{proof}\par
Following the well known theory of \cite{zbMATH03340747}, the discrete LBB condition is necessary for well-posedness of our method which makes $\mathfrak{b}_h$ coercive.\par
\begin{lemma}[Discrete LBB Condition]
	For all $(\bm \sigma_h, \bm R_h,\bm \theta_h, \widehat{\bm \theta}_h,p_h,\widehat{p}_h) \in \Sigma_h \times\mathcal{S}_h\times  \mathcal{Y}_h \times  \widehat{\mathcal{Y}}_h\times\mathcal{Q}_h\times\widehat{\mathcal{Q}}_h$, we have 
	\begin{equation}\label{lbbinequality}
		\begin{aligned}
			\sup_{\color{black}\bm 0\neq (\bm \tau_h, \bm S_h,\bm \phi_h, \widehat{\bm \phi}_h,q_h,\widehat{q}_h)\atop
				\in \Sigma_h \times\mathcal{S}_h\times  \mathcal{Y}_h \times  \widehat{\mathcal{Y}}_h\times\mathcal{Q}_h\times\widehat{\mathcal{Q}}_h}&\frac{\mathfrak{b}_h(\bm \sigma_h, \bm R_h,\bm \theta_h, \widehat{\bm \theta}_h,p_h,\widehat{p}_h;\bm \tau_h, \bm S_h,\bm \phi_h, \widehat{\bm \phi}_h,q_h,\widehat{q}_h)}{\|(\bm \tau_h, \bm S_h,\bm \phi_h, \widehat{\bm \phi}_h,q_h,\widehat{q}_h)\|_{\mathfrak{b}_h}}\\
			&\qquad\geq C\|(\bm\sigma_h,\bm R_h,\bm\theta_h,\widehat{\bm\theta}_h,p_h,\widehat p_h)\|_{\mathfrak{b}_h}.
		\end{aligned}
	\end{equation}
\end{lemma}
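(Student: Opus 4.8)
The plan is to establish the discrete LBB (inf-sup) condition \eqref{lbbinequality} by exhibiting, for each fixed tuple $(\bm\sigma_h,\bm R_h,\bm\theta_h,\widehat{\bm\theta}_h,p_h,\widehat p_h)$, a suitable test tuple $(\bm\tau_h,\bm S_h,\bm\phi_h,\widehat{\bm\phi}_h,q_h,\widehat q_h)$ for which $\mathfrak{b}_h$ controls $\|(\bm\sigma_h,\bm R_h,\bm\theta_h,\widehat{\bm\theta}_h,p_h,\widehat p_h)\|_{\mathfrak{b}_h}^2$ from below, while the test tuple's norm is controlled from above by the trial tuple's norm. The natural first attempt is the ``diagonal'' choice $(\bm\tau_h,\bm S_h,\bm\phi_h,\widehat{\bm\phi}_h,q_h,\widehat q_h)=(\bm\sigma_h,\bm R_h,\bm\theta_h,\widehat{\bm\theta}_h,-p_h,-\widehat p_h)$ (sign-flipping the $p$-block to cancel the off-diagonal $(\nabla^\perp p,\cdot)$ couplings against the $(\nabla^\perp q,\cdot)$ couplings). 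Plugging this in, the skew-symmetric cross terms between $\bm\sigma_h,\bm\theta_h$ and between $\bm\theta_h,p_h$ telescope away, leaving $\mathfrak{b}_h(\cdots)=(\mathcal C^{-1}\bm\sigma_h,\bm\sigma_h)_{\mathcal T_h}+\lambda t^{-2}\|\bm R_h\|_{\mathcal T_h}^2+\|\alpha_2^{1/2}(\bm\Pi_\ell^\partial\bm\theta_h-\widehat{\bm\theta}_h)\|_{\partial\mathcal T_h}^2+\|\alpha_3^{1/2}(\Pi_{k-1}^\partial p_h-\widehat p_h)\|_{\partial\mathcal T_h}^2$. By \eqref{key} this bounds $\|\bm\sigma_h\|_{\mathcal T_h}^2$, $t^{-2}\|\bm R_h\|_{\mathcal T_h}^2$, and the two stabilization seminorms in \eqref{norm}, but it misses the two ``elliptic'' pieces $\|\nabla\bm\theta_h\|_{\mathcal T_h}^2$ and $t^2\|\nabla^\perp p_h\|_{\mathcal T_h}^2$.

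To recover $\|\nabla\bm\theta_h\|_{\mathcal T_h}^2$, I would add to the test function a controlled perturbation in the $\bm\sigma$ slot: take $\bm\tau_h=\bm\sigma_h-\delta\,\bm\Pi_{k-1}^o\bm\epsilon(\bm\theta_h)$ for a small parameter $\delta>0$. The extra term $(\nabla\bm\theta_h,\delta\bm\Pi_{k-1}^o\bm\epsilon(\bm\theta_h))_{\mathcal T_h}$ coming from the $-(\nabla\bm\theta_h,\bm\tau_h)$ coupling produces $\delta\|\bm\epsilon(\bm\theta_h)\|_{\mathcal T_h}^2$ (using that $\bm\epsilon(\bm\theta_h)$ is already piecewise polynomial of degree $k-1$ so the projection is harmless there, and self-adjointness of the projection), while the cross term $\delta(\mathcal C^{-1}\bm\sigma_h,\bm\Pi_{k-1}^o\bm\epsilon(\bm\theta_h))_{\mathcal T_h}$ is absorbed into the already-controlled $\|\bm\sigma_h\|_{\mathcal T_h}^2$ via Young's inequality at the cost of a $\delta^2$ factor. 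Then the HDG--Korn inequality \eqref{HDGkorn} upgrades $\|\bm\epsilon(\bm\theta_h)\|_{\mathcal T_h}^2+\|\mathfrak h^{-1/2}(\bm\Pi_\ell^\partial\bm\theta_h-\widehat{\bm\theta}_h)\|_{\partial\mathcal T_h}^2$ to $\|\nabla\bm\theta_h\|_{\mathcal T_h}^2$; note the stabilization seminorm here is comparable to $\|\alpha_2^{1/2}(\cdots)\|_{\partial\mathcal T_h}^2$ since $\alpha_2|_K=h_K^{-1}$, which we already control. Symmetrically, to recover $t^2\|\nabla^\perp p_h\|_{\mathcal T_h}^2$ I would perturb the $\bm S$ slot by $\bm S_h=\bm R_h+\eta\,t^2\,\bm\Pi_{k-1}^o\nabla^\perp p_h$, so the coupling $(p_h,\nabla\times\bm S_h)_{\mathcal T_h}$ (equivalently $-(\nabla^\perp p_h,\bm S_h)$ after integration-by-parts bookkeeping, consistent with how $\mathfrak{b}_h$ is written) yields $\eta t^2\|\nabla^\perp p_h\|_{\mathcal T_h}^2$, with the cross term $\eta t^2(\lambda t^{-2}\bm R_h,\bm\Pi_{k-1}^o\nabla^\perp p_h)_{\mathcal T_h}=\eta\lambda(\bm R_h,\bm\Pi_{k-1}^o\nabla^\perp p_h)_{\mathcal T_h}$ absorbed into $t^{-2}\|\bm R_h\|_{\mathcal T_h}^2$ (again paying a factor controlled by $\eta$ and $t\le1$). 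Finally one checks that the augmented test tuple still satisfies $\|(\bm\tau_h,\bm S_h,\bm\phi_h,\widehat{\bm\phi}_h,q_h,\widehat q_h)\|_{\mathfrak{b}_h}\le C\|(\bm\sigma_h,\bm R_h,\bm\theta_h,\widehat{\bm\theta}_h,p_h,\widehat p_h)\|_{\mathfrak{b}_h}$ — for the $\bm\tau_h$-perturbation this uses \eqref{1} and the inverse inequality \eqref{4} to bound $\|\bm\Pi_{k-1}^o\bm\epsilon(\bm\theta_h)\|_{\mathcal T_h}\le\|\bm\epsilon(\bm\theta_h)\|_{\mathcal T_h}\le C\|\nabla\bm\theta_h\|_{\mathcal T_h}$, and similarly for $\bm S_h$ — so dividing through gives the claimed bound after fixing $\delta,\eta$ small enough (depending only on $\lambda$, the constant in \eqref{key}, and the HDG--Korn constant).

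The main obstacle I anticipate is the bookkeeping of the off-diagonal pressure couplings: $\mathfrak{b}_h$ contains \emph{two} $p$-related coupling blocks, one pairing $p_h$ with $\bm S_h$ (the auxiliary shear-curl variable) and one pairing $p_h$ with $\bm\phi_h$ (the rotation), plus the mirror terms with $q_h$, and one must verify that the sign choice $q_h=-p_h$, $\widehat q_h=-\widehat p_h$ really does cancel \emph{all} the antisymmetric pieces simultaneously (including the boundary terms $\langle\widehat p_h,\bm S_h\cdot\bm t\rangle$, $\langle\bm\phi_h\cdot\bm t,\widehat p_h\rangle$ and their $q$-counterparts) without leaving a residual that competes with the elliptic gains. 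A secondary subtlety is that the perturbation $\delta\bm\Pi_{k-1}^o\bm\epsilon(\bm\theta_h)$ also feeds into the coupling $(\bm\tau_h,\nabla\bm\phi_h)$... wait, in this formulation that term is $-(\nabla\bm\theta_h,\bm\tau_h)$ on one side and $(\bm\sigma_h,\nabla\bm\phi_h)$ on the other, so with $\bm\phi_h=\bm\theta_h$ the relevant new contribution is exactly the one identified above; nonetheless one must track that the perturbation does not disturb the $\widehat{\bm\phi}_h$ boundary coupling $\langle\widehat{\bm\phi}_h,\bm\sigma_h\bm n\rangle$ since we only perturbed $\bm\tau_h$, not $\widehat{\bm\phi}_h$ — it does not, because that term involves $\bm\sigma_h$, not $\bm\tau_h$. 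Once these cancellations are confirmed, the rest is the standard small-parameter absorption argument. I would also remark that the case $t\to0$ is handled uniformly: the gains $\delta\|\nabla\bm\theta_h\|^2$ and the $\|\bm\sigma_h\|^2$, stabilization terms carry no negative power of $t$, and the $t^2\|\nabla^\perp p_h\|^2$ gain degenerates consistently with its appearance in \eqref{norm}, so no constant blows up.
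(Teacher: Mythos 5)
Your overall strategy coincides with the paper's: test with the diagonal tuple, perturb the $\bm\tau$-slot by a small multiple of $\bm{\epsilon}(\bm\theta_h)$ and the $\bm S$-slot by a small multiple of $t^2\nabla^{\perp}p_h$, recover $\|\nabla\bm\theta_h\|_{\mathcal T_h}$ through the HDG--Korn inequality \eqref{HDGkorn}, absorb the cross terms by Young's inequality and \eqref{key}, and finally check that the test tuple's $\|\cdot\|_{\mathfrak b_h}$-norm is bounded by the trial tuple's. The paper does exactly this with $\bm\tau_h=\bm\sigma_h+\beta_1\bm{\epsilon}(\bm\theta_h)$, $\bm S_h=\bm R_h+\beta_2\lambda^{-1}t^2\nabla^{\perp}p_h$ with $\beta_1,\beta_2<0$, and $\bm\phi_h=\bm\theta_h$, $\widehat{\bm\phi}_h=\widehat{\bm\theta}_h$, $q_h=p_h$, $\widehat q_h=\widehat p_h$.

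However, the sign flip $q_h=-p_h$, $\widehat q_h=-\widehat p_h$ is wrong for the form $\mathfrak b_h$ as defined in this paper, and the identity you state after ``plugging in'' is false for that choice. The antisymmetry is already built into $\mathfrak b_h$: the trial-$p$ blocks enter as $+(p_h,\nabla\times\bm S_h)_{\mathcal T_h}-\langle\widehat p_h,\bm S_h\cdot\bm t\rangle_{\partial\mathcal T_h}$ and $+(\nabla\times\bm\phi_h,p_h)_{\mathcal T_h}-\langle\bm\phi_h\cdot\bm t,\widehat p_h\rangle_{\partial\mathcal T_h}$, while the test-$q$ blocks already carry the opposite sign, $-(q_h,\nabla\times\bm R_h)_{\mathcal T_h}+\langle\widehat q_h,\bm R_h\cdot\bm t\rangle_{\partial\mathcal T_h}$ and $-(\nabla\times\bm\theta_h,q_h)_{\mathcal T_h}+\langle\bm\theta_h\cdot\bm t,\widehat q_h\rangle_{\partial\mathcal T_h}$. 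Hence the cancellation you want occurs precisely for $q_h=+p_h$, $\widehat q_h=+\widehat p_h$; with your flip the four coupling terms double instead of telescoping (and they involve $\|p_h\|_{\mathcal T_h}$, which is not controlled by \eqref{norm}), and, worse, the stabilization term becomes $-\|\alpha_3^{1/2}(\Pi_{k-1}^{\partial}p_h-\widehat p_h)\|^2_{\partial\mathcal T_h}$, destroying exactly the piece of \eqref{norm} you claim to have gained. The fix is simply to drop the flip. Two smaller points: (i) the sign of your $\bm S$-perturbation is internally inconsistent --- if, as you say, the net coupling is $-(\nabla^{\perp}p_h,\bm S_h)$, then $\bm S_h=\bm R_h+\eta t^2\nabla^{\perp}p_h$ with $\eta>0$ produces $-\eta t^2\|\nabla^{\perp}p_h\|^2_{\mathcal T_h}$, so the coefficient must be negative (the paper's $\beta_2<0$); (ii) both perturbations generate boundary residuals through $-\langle\widehat{\bm\theta}_h,\bm\tau_h\bm n\rangle_{\partial\mathcal T_h}$ and $-\langle\widehat p_h,\bm S_h\cdot\bm t\rangle_{\partial\mathcal T_h}$, of the type $\beta_1\langle\bm\Pi_{\ell}^{\partial}\bm\theta_h-\widehat{\bm\theta}_h,\bm{\epsilon}(\bm\theta_h)\bm n\rangle_{\partial\mathcal T_h}$, which must be bounded via the discrete trace inequality \eqref{6} and absorbed into the $\alpha_2$, $\alpha_3$ terms; your sketch omits these, though they fit into the standard absorption argument you invoke.
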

\begin{proof}
	For any fixed $(\bm \sigma_h, \bm R_h,\bm \theta_h, \widehat{\bm \theta}_h,p_h,\widehat{p}_h) \in \Sigma_h \times\mathcal{S}_h\times  \mathcal{Y}_h \times  \widehat{\mathcal{Y}}_h\times\mathcal{Q}_h\times\widehat{\mathcal{Q}}_h$, let $\beta_1,\beta_2$ be two constants that will be specified below, we take $\bm \tau_h=\bm \sigma_h+\beta_1\bm{\epsilon}(\bm\theta_h)$, $\bm S_h=\bm R_h+\beta_2\lambda^{-1}t^2\nabla^{\perp}p_h$, $\bm\phi_h=\bm\theta_h$, $\widehat{\bm\phi}_h=\widehat{\bm\theta}_h$, $q_h=p_h$, $\widehat{q}_h=\widehat{p}_h$ and we get
	\begin{align*}
		&\mathfrak{b}_h(\bm \sigma_h, \bm R_h,\bm \theta_h, \widehat{\bm \theta}_h,p_h,\widehat{p}_h;\bm \tau_h, \bm S_h,\bm \phi_h, \widehat{\bm \phi}_h,q_h,\widehat{q}_h)\\
		&\qquad=(\mathcal C^{-1}\bm\sigma_h,\bm \sigma_h+\beta_1\bm{\epsilon}(\bm\theta_h))_{\mathcal T_h}+(\bm\theta_h,\nabla\cdot(\bm \sigma_h+\beta_1\bm{\epsilon}(\bm\theta_h)))_{\mathcal T_h}\\
		&\qquad\qquad-\langle \widehat{\bm\theta}_h,(\bm \sigma_h+\beta_1\bm{\epsilon}(\bm\theta_h))\bm n\rangle_{\partial\mathcal T_h}+(\lambda t^{-2} \bm R_h,\bm R_h+\beta_2\lambda^{-1}t^2\nabla^{\perp}p_h)_{\mathcal T_h}\\
		&\qquad\qquad-(\bm\theta_h,\nabla\cdot\bm\sigma_h)_{\mathcal T_h}+\langle\widehat{\bm\theta}_h,\bm\sigma_h\bm n\rangle_{\partial\mathcal T_h}+\langle\alpha_2(\bm\Pi_{\ell}^{\partial}\bm{\theta}_h-\widehat {\bm\theta}_h),\bm\Pi_{\ell}^{\partial}\bm\theta_h-\widehat {\bm\theta}_h\rangle_{\partial\mathcal T_h}\\
		&\qquad\qquad+(p_h,\nabla\times(\bm R_h+\beta_2\lambda^{-1}t^2\nabla^{\perp}p_h))_{\mathcal T_h}-\langle\widehat p_h,(\bm R_h+\beta_2\lambda^{-1}t^2\nabla^{\perp}p_h)\cdot\bm t\rangle_{\partial\mathcal T_h}\\
		&\qquad\qquad+(\nabla\times\bm\theta_h,p_h)_{\mathcal T_h}-\langle \bm\theta_h\cdot\bm t,\widehat p_h\rangle_{\partial\mathcal T_h}-(p_h,\nabla\times\bm R_h)_{\mathcal T_h}
		+\langle\widehat p_h,\bm R_h\cdot\bm t\rangle_{\partial\mathcal T_h}\\
		&\qquad\qquad-(\nabla\times\bm\theta_h,p_h)_{\mathcal T_h}+\langle \bm\theta_h\cdot\bm t,\widehat p_h\rangle_{\partial\mathcal T_h}+\langle\alpha_3(\Pi_{k-1}^{\partial}p_h-\widehat p_h),\Pi_{k-1}^{\partial}p_h-\widehat p_h\rangle_{\partial\mathcal T_h}.
	\end{align*}
	Then applying the orthogonality of ${\rm L}^2$ projections, we get
	\begin{align*}
		&\mathfrak{b}_h(\bm \sigma_h, \bm R_h,\bm \theta_h, \widehat{\bm \theta}_h,p_h,\widehat{p}_h;\bm \tau_h, \bm S_h,\bm \phi_h, \widehat{\bm \phi}_h,q_h,\widehat{q}_h)\\
		&\qquad=(\mathcal C^{-1}\bm\sigma_h,\bm \sigma_h)_{\mathcal T_h}+(\mathcal C^{-1}\bm\sigma_h,\beta_1\bm{\epsilon}(\bm\theta_h))_{\mathcal T_h}+(-\beta_1\bm{\epsilon}(\bm\theta_h),\bm{\epsilon}(\bm\theta_h))_{\mathcal T_h}\\
		&\qquad\qquad+\langle\bm\Pi_{\ell}^{\partial}\bm\theta_h-\widehat{\bm \theta}_h,\beta_1\bm{\epsilon}(\bm\theta_h)\bm n\rangle_{\partial\mathcal T_h}+(\lambda t^{-2}\bm R_h,\bm R_h)_{\mathcal T_h}+(\bm R_h,\beta_2\nabla^{\perp}p_h)_{\mathcal T_h}\\
		&\qquad\qquad-(\nabla^{\perp}p_h,\beta_2\lambda^{-1}t^2\nabla^{\perp}p_h)_{\mathcal T_h}+\langle\Pi_{k-1}^{\partial}p_h-\widehat{p}_h,\beta_2\lambda^{-1}t^2\nabla^{\perp}p_h\cdot\bm t\rangle_{\partial\mathcal T_h}\\
		&\qquad\qquad+\langle\alpha_2(\bm\Pi_{\ell}^{\partial}\bm{\theta}_h-\widehat {\bm\theta}_h),\bm\Pi_{\ell}^{\partial}\bm\theta_h-\widehat {\bm\theta}_h\rangle_{\partial\mathcal T_h}+\langle\alpha_3(\Pi_{k-1}^{\partial}p_h-\widehat p_h),\Pi_{k-1}^{\partial}p_h-\widehat p_h\rangle_{\partial\mathcal T_h}.
	\end{align*}
	Next, using Cauchy-Schwartz inequality and \eqref{key} with $\beta_1<0,\beta_2<0$, we get
	\begin{align*}
		&\mathfrak{b}_h(\bm \sigma_h, \bm R_h,\bm \theta_h, \widehat{\bm \theta}_h,p_h,\widehat{p}_h;\bm \tau_h, \bm S_h,\bm \phi_h, \widehat{\bm \phi}_h,q_h,\widehat{q}_h)\\
		&\qquad\geq\|\bm \sigma_h\|^2_{\mathcal{T}_h}+\beta_1\|\bm \sigma_h\|_{\mathcal{T}_h}\|\bm{\epsilon}(\bm\theta_h)\|_{\mathcal{T}_h}-\beta_1\|\bm{\epsilon}(\bm\theta_h)\|^2_{\mathcal{T}_h}\\
		&\qquad\qquad+\beta_1\|\mathfrak{h}^{-\frac{1}{2}}(\bm\Pi_{\ell}^{\partial}\bm{\theta}_h-\widehat {\bm\theta}_h)\|_{\partial\mathcal{T}_h}\|\mathfrak{h}^{\frac{1}{2}}\bm{\epsilon}(\bm\theta_h)\|_{\partial\mathcal{T}_h}+\lambda t^{-2}\|\bm R_h\|^2_{\mathcal{T}_h}\\
		&\qquad\qquad+\beta_2 t^{-1}\|\bm R_h\|_{\mathcal{T}_h}\cdot t\|\nabla^{\perp}p_h\|_{\mathcal{T}_h}-\beta_2\lambda^{-1} t^2\|\nabla^{\perp}p_h\|^2_{\mathcal{T}_h}\\
		&\qquad\qquad+\beta_2t\| \mathfrak{h}^{-\frac{1}{2}}(\Pi_{k-1}^{\partial}p_h-\widehat p)\|_{\partial\mathcal{T}_h}\cdot \lambda^{-1}t\|\mathfrak{h}^{\frac{1}{2}}\nabla^{\perp}p_h\|_{\partial\mathcal{T}_h}\\
		&\qquad\qquad+\|\alpha_2^\frac{1}{2}(\bm\Pi_{\ell}^{\partial}\bm{\theta}_h-\widehat {\bm\theta}_h)\|^2_{\partial\mathcal{T}_h}+\|\alpha_3^\frac{1}{2}(\Pi_{k-1}^{\partial}p_h-\widehat p_h)\|^2_{\partial\mathcal{T}_h}.
	\end{align*}
	Using \eqref{6} and arithmetic and geometric means inequality, we get
	\begin{align*}
		&\mathfrak{b}_h(\bm \sigma_h, \bm R_h,\bm \theta_h, \widehat{\bm \theta}_h,p_h,\widehat{p}_h;\bm \tau_h, \bm S_h,\bm \phi_h, \widehat{\bm \phi}_h,q_h,\widehat{q}_h)\\
		&\qquad\geq\|\bm \sigma_h\|^2_{\mathcal{T}_h}+\|\alpha_2^\frac{1}{2}(\bm\Pi_{\ell}^{\partial}\bm{\theta}_h-\widehat {\bm\theta}_h)\|^2_{\partial\mathcal{T}_h}-\frac{\beta_1}{2}(\|\bm{\epsilon}(\bm\theta_h)\|^2_{\mathcal{T}_h}+\|\mathfrak{h}^{-\frac{1}{2}}(\bm\Pi_{\ell}^{\partial}\bm{\theta}_h-\widehat {\bm\theta}_h)\|^2_{\partial\mathcal{T}_h})\\
		&\qquad\qquad+C\beta_1(\|\bm \sigma_h\|^2_{\mathcal{T}_h}+\|\mathfrak{h}^{-\frac{1}{2}}(\bm\Pi_{\ell}^{\partial}\bm{\theta}_h-\widehat {\bm\theta}_h)\|^2_{\partial\mathcal{T}_h})\\
		&\qquad\qquad+\lambda t^{-2}\|\bm R_h\|^2_{\mathcal{T}_h}+\|\alpha_3^\frac{1}{2}(\Pi_{k-1}^{\partial} p_h-\widehat{p}_h)\|^2_{\partial\mathcal{T}_h}-\frac{\beta_2}{2}\lambda^{-1}t^2\|\nabla^{\perp}p_h\|^2_{\mathcal{T}_h}\\
		&\qquad\qquad +C\beta_2(\lambda t^{-2}\|\bm R_h\|^2_{\mathcal{T}_h}+t^2\|\mathfrak{h}^{-\frac{1}{2}}(\Pi_{k-1}^{\partial} p_h-\widehat{p}_h)\|^2_{\partial\mathcal{T}_h}).
	\end{align*}
	Using HDG-Korn’s inequality \eqref{HDGkorn}, it holds
	\begin{align*}
		&\mathfrak{b}_h(\bm \sigma_h, \bm R_h,\bm \theta_h, \widehat{\bm \theta}_h,p_h,\widehat{p}_h;\bm \tau_h, \bm S_h,\bm \phi_h, \widehat{\bm \phi}_h,q_h,\widehat{q}_h)\\
		&\qquad\geq(1+C\beta_1)(\|\bm \sigma_h\|^2_{\mathcal{T}_h}+\|\alpha_2^\frac{1}{2}(\bm\Pi_{\ell}^{\partial}\bm{\theta}_h-\widehat {\bm\theta}_h)\|^2)-\frac{\beta_1}{2}\|\nabla\bm\theta_h\|^2_{\mathcal{T}_h}\\
		&\qquad\qquad+(1+C\beta_2)(\lambda t^{-2}\|\bm R_h\|^2_{\mathcal{T}_h}+\|\alpha_3^\frac{1}{2}(\Pi_{k-1}^{\partial} p_h-\widehat{p}_h)\|^2_{\partial\mathcal{T}_h})-\frac{\beta_2}{2}\lambda^{-1}t^2\|\nabla^{\perp}p_h\|^2_{\mathcal{T}_h}.
	\end{align*}\par
	Next, we take $1+C\beta_1>0,1+C\beta_2>0$ to conclude that
	\begin{align*}
		&\mathfrak{b}_h(\bm \sigma_h, \bm R_h,\bm \theta_h, \widehat{\bm \theta}_h,p_h,\widehat{p}_h;\bm \tau_h, \bm S_h,\bm \phi_h, \widehat{\bm \phi}_h,q_h,\widehat{q}_h)\\
		&\qquad\geq C\|(\bm \sigma_h, \bm R_h,\bm \theta_h, \widehat{\bm \theta}_h,p_h,\widehat{p}_h)\|^2_{\mathfrak{b}_h}.
	\end{align*}\par
	With   $\bm \tau_h=\bm \sigma_h+\beta_1\bm{\epsilon}(\bm\theta_h)$, 
	$\bm S_h=\bm R_h+\beta_2\lambda^{-1}t^2\nabla^{\perp}p_h$,
	$\bm\phi_h=\bm\theta_h$,
	$\widehat{\bm\phi}_h=\widehat{\bm\theta}_h$ and
	$q_h=p_h$,$\widehat{q}_h=\widehat{p}_h$, the triangle inequality implies
	\begin{align*}
		&\|(\bm \tau_h, \bm S_h,\bm \phi_h, \widehat{\bm \phi}_h,q_h,\widehat{q}_h)\|_{\mathfrak{b}_h}\\
		&\qquad\leq\max\{1+|\beta_1|,1+|\beta_2|\}\|(\bm \sigma_h, \bm R_h,\bm \theta_h, \widehat{\bm \theta}_h,p_h,\widehat{p}_h)\|_{\mathfrak{b}_h}\\
		&\qquad\leq C\|(\bm \sigma_h, \bm R_h,\bm \theta_h, \widehat{\bm \theta}_h,p_h,\widehat{p}_h)\|_{\mathfrak{b}_h}.
	\end{align*}
	Then the result \eqref{lbbinequality} follows immediately.
\end{proof}\par
\begin{lemma}
	Let $(\bm \sigma,\bm R,\bm\theta,p) \in (\left[\mathrm{L}^2(\Omega)\right]^{2 \times 2}\cap \mathbb{S})\times \left[\mathrm{L}^2(\Omega)\right]^2 \times \left[H_0^1(\Omega)\right]^2\times \widehat{H}^1(\Omega)$ be the weak solution to the \eqref{stage2}, then for all $(\bm \tau_h, \bm S_h,\bm \phi_h, \widehat{\bm \phi}_h,q_h,\widehat{q}_h) \in \mathcal{T}_h \times\mathcal{S}_h\times  \mathcal{Y}_h \times  \widehat{\mathcal{Y}}_h\times\mathcal{Q}_h\times\widehat{\mathcal{Q}}_h$, it holds 
	\allowdisplaybreaks
	\begin{equation}\label{errorprojection}
		\begin{aligned}
			&\mathfrak{b}_h(\bm \Pi_{k-1}^o\bm \sigma,\bm \Pi_{k-1}^o \bm R,\bm \Pi_{k}^o \bm \theta,\bm\Pi_{\ell}^{\partial}\bm \theta,\Pi_k^o p,\Pi_{k-1}^{\partial} p;\bm \tau_h, \bm S_h,\bm\phi_h,\widehat{\bm \phi}_h,q_h,\widehat{q}_h)\\
			&\qquad=(\bm L+\bm f, \bm{\mathcal{I}}_h(\bm\phi_h,\widehat{\bm\phi}_h))_{\mathcal T_h}+(\nabla\bm{\mathcal{I}}_h(\bm\phi_h,\widehat{\bm\phi}_h),\bm \Pi_{k-1}^o\bm\sigma-\bm\sigma)_{\mathcal T_h}\\
			&\qquad\qquad+\langle\alpha_2(\bm\Pi_{\ell}^{\partial}\bm \Pi_{k}^o\bm{\theta}-\bm\Pi_{\ell}^{\partial}\bm \theta),\bm\Pi_{\ell}^{\partial}\bm\phi_h-\widehat {\bm\phi}_h\rangle_{\partial\mathcal T_h}-\langle (\bm\phi_h-\bm\Pi^o_{k-1} \bm \phi_h)\cdot\bm t, \Pi_{k-1}^{\partial}p-p\rangle_{\partial{\mathcal{T}_h}}\\
			&\qquad\qquad+(\nabla\times\bm{\mathcal{I}}_h(\bm\phi_h,\widehat{\bm\phi}_h),\bm \Pi_k^o p-p)_{\mathcal T_h}-\langle (\bm{\mathcal{I}}_h(\bm\phi_h,\widehat{\bm\phi}_h)-\bm\phi_h)\cdot\bm t,\Pi_{k}^o p-p\rangle_{\partial\mathcal T_h}\\
			&\qquad\qquad+(\nabla^{\perp}\mathcal{I}_h(q_h,\widehat q_h),\bm \Pi_{k-1}^o\bm R-\bm R)_{\mathcal T_h}+(\bm \Pi_{k}^o\bm\theta-\bm \theta,\nabla^{\perp}\mathcal{I}_h(q_h,\widehat q_h))_{\mathcal T_h}\\
			&\qquad\qquad+\langle\alpha_3(\Pi_{k-1}^{\partial}\Pi_k^op-\Pi_{k-1}^{\partial} p),\Pi_{k-1}^{\partial}q_h-\widehat q_h\rangle_{\partial\mathcal T_h}.
		\end{aligned}
	\end{equation}
\end{lemma}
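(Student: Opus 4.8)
The plan is to follow the template of the proof of \Cref{lemma1}: expand $\mathfrak{b}_h$ evaluated at the projected exact solution $(\bm\Pi_{k-1}^o\bm\sigma,\bm\Pi_{k-1}^o\bm R,\bm\Pi_k^o\bm\theta,\bm\Pi_\ell^\partial\bm\theta,\Pi_k^o p,\Pi_{k-1}^\partial p)$ against an arbitrary test tuple $(\bm\tau_h,\bm S_h,\bm\phi_h,\widehat{\bm\phi}_h,q_h,\widehat q_h)$, and then replace the discrete test functions $\bm\phi_h$ and $q_h$ by the conforming interpolants $\bm{\mathcal I}_h(\bm\phi_h,\widehat{\bm\phi}_h)\in[H_0^1(\Omega)]^2$ and $\mathcal{I}_h(q_h,\widehat q_h)\in\widehat H^1(\Omega)$ so that the weak formulation \eqref{stage2} can be invoked. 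The interpolant $\mathcal{I}_h(q_h,\widehat q_h)$ is taken with range in $\widehat H^1(\Omega)$, as allowed by the Remark after \eqref{pro}, because $q_h\in\mathcal{P}_h\subset\mathrm{L}_0^2(\Omega)$; likewise $\bm{\mathcal I}_h(\bm\phi_h,\widehat{\bm\phi}_h)\in[H_0^1(\Omega)]^2$ since $\widehat{\bm\phi}_h|_{\partial\Omega}=\bm 0$.

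First I would treat the blocks of $\mathfrak{b}_h$ that pair only with $\bm\tau_h$ or only with $\bm S_h$. Since $\mathcal C^{-1}$ is a constant tensor and $\nabla\cdot\bm\tau_h,\ \nabla\times\bm S_h$ have degree at most $k-2$ while $\bm\tau_h\bm n,\ \bm S_h\cdot\bm t$ have degree at most $k-1$ on each face, the $\mathrm{L}^2$-orthogonality of $\Pi_{k-1}^o,\bm\Pi_{k-1}^o,\Pi_k^o,\bm\Pi_\ell^\partial$ (using $\ell\ge k-1$) removes all projections in these blocks, and elementwise integration by parts reassembles them into $(\mathcal C^{-1}\bm\sigma,\bm\tau_h)_{\mathcal T_h}-(\nabla\bm\theta,\bm\tau_h)_{\mathcal T_h}$ and $(\lambda t^{-2}\bm R,\bm S_h)_{\mathcal T_h}-(\nabla^\perp p,\bm S_h)_{\mathcal T_h}$, which vanish by \eqref{stage2} (take, respectively, only the $\bm\tau$-slot active with $\bm\tau=\bm\tau_h$, and only the $\bm S$-slot active with $\bm S=\bm S_h$). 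For the block pairing with $\bm\phi_h,\widehat{\bm\phi}_h$, I would use \eqref{pro3}--\eqref{pro4} to exchange $\bm\phi_h$ for $\bm{\mathcal I}_h(\bm\phi_h,\widehat{\bm\phi}_h)$ against the exact-solution quantities, then integrate by parts elementwise (the divergence identity for the $\bm\sigma$--$\bm\phi$ coupling, the curl/perp identity for the $p$--$\bm\phi$ coupling) to expose $(\bm\sigma,\nabla\bm{\mathcal I}_h(\bm\phi_h,\widehat{\bm\phi}_h))_{\mathcal T_h}$ and $(\bm{\mathcal I}_h(\bm\phi_h,\widehat{\bm\phi}_h),\nabla^\perp p)_{\mathcal T_h}$; similarly the $q_h,\widehat q_h$ block produces $(\bm\theta+\bm R,\nabla^\perp\mathcal{I}_h(q_h,\widehat q_h))_{\mathcal T_h}$. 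Every replacement of a projected quantity by the exact one leaves a projection-error remainder, and these are exactly the interior error terms of \eqref{errorprojection}; the trace mismatches --- between $\bm\phi_h$ and $\bm\Pi_{k-1}^o\bm\phi_h$, between $\widehat p_h\mapsto\Pi_{k-1}^\partial p$ and $p$, and between $\bm{\mathcal I}_h(\bm\phi_h,\widehat{\bm\phi}_h)$ and $\bm\phi_h$ --- produce the two tangential boundary error terms.

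It then remains to invoke the weak form: \eqref{stage2} with the $\bm\phi$-slot active and $\bm\phi=\bm{\mathcal I}_h(\bm\phi_h,\widehat{\bm\phi}_h)$ replaces $(\bm\sigma,\nabla\bm{\mathcal I}_h(\bm\phi_h,\widehat{\bm\phi}_h))_{\mathcal T_h}-(\bm{\mathcal I}_h(\bm\phi_h,\widehat{\bm\phi}_h),\nabla^\perp p)_{\mathcal T_h}$ by $(\bm L+\bm f,\bm{\mathcal I}_h(\bm\phi_h,\widehat{\bm\phi}_h))_{\mathcal T_h}$, and \eqref{stage2} with the $q$-slot active and $q=\mathcal{I}_h(q_h,\widehat q_h)\in\widehat H^1(\Omega)$ annihilates $(\bm\theta+\bm R,\nabla^\perp\mathcal{I}_h(q_h,\widehat q_h))_{\mathcal T_h}$. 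The two stabilization terms are unaffected apart from the substitutions $\bm\theta_h\mapsto\bm\Pi_k^o\bm\theta,\ \widehat{\bm\theta}_h\mapsto\bm\Pi_\ell^\partial\bm\theta$ and $p_h\mapsto\Pi_k^o p,\ \widehat p_h\mapsto\Pi_{k-1}^\partial p$, which yield the $\alpha_2$- and $\alpha_3$-terms of \eqref{errorprojection}. Collecting all the pieces gives the identity.

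The step I expect to be the main obstacle is the bookkeeping of the curl/perp couplings and their tangential boundary traces. One must fix once and for all the sign convention in the elementwise identity relating $(\nabla\times\bm\phi,q)_K$, $(\bm\phi,\nabla^\perp q)_K$ and $\langle\bm\phi\cdot\bm t,q\rangle_{\partial K}$ and apply it consistently through the $\bm S$-, $\bm\phi$- and $q$-blocks; check that the boundary traces of the single-valued conforming interpolants telescope across interior faces and vanish on $\partial\Omega$, so that only the intended mismatch terms survive; and verify the degree counts so that every use of $\mathrm{L}^2$-orthogonality and of \eqref{pro3}--\eqref{pro4} is legitimate. Once these are pinned down the remaining algebra is routine and entirely parallel to \Cref{lemma1}.
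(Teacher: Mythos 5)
Your proposal is correct and follows essentially the same route as the paper: expand $\mathfrak{b}_h$ at the projected exact solution, swap $\bm\phi_h$ and $q_h$ for the conforming interpolants $\bm{\mathcal I}_h(\bm\phi_h,\widehat{\bm\phi}_h)$ and $\mathcal{I}_h(q_h,\widehat q_h)$ via \eqref{pro3}--\eqref{pro4}, use $\mathrm{L}^2$-orthogonality and elementwise integration by parts to reassemble the $\bm\tau_h$-, $\bm S_h$-, $\bm\phi$- and $q$-blocks, and then invoke \eqref{stage2}, with the projection and trace mismatches supplying exactly the remainder terms of \eqref{errorprojection}. The degree counts and the admissibility of the interpolants (ranges $[H_0^1(\Omega)]^2$ and $\widehat H^1(\Omega)$) are invoked correctly, so only the routine algebra you already flagged remains.
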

\begin{proof}
	By the definition of $\mathfrak{b}_h$, we get
	\begin{align*}
		&\mathfrak{b}_h(\bm \Pi_{k-1}^o\bm \sigma,\bm \Pi_{k-1}^o \bm R,\bm \Pi_{k}^o \bm \theta,\bm\Pi_{\ell}^{\partial}\bm \theta,\Pi_k^o p,\Pi_{k-1}^{\partial} p;\bm \tau_h, \bm S_h,\bm\phi_h,\widehat{\bm \phi}_h,q_h,\widehat{q}_h)\\
		&\qquad=(\mathcal C^{-1}\bm\Pi_{k-1}^o\bm\sigma,\bm\tau_h)_{\mathcal T_h}+(\bm\Pi_k^o\bm\theta,\nabla\cdot\bm\tau_h)_{\mathcal T_h}-\langle \bm\Pi_{\ell}^{\partial}\bm\theta,\bm\tau_h\bm n\rangle_{\partial\mathcal T_h}\\
		&\qquad\qquad+(\lambda t^{-2} \bm \Pi_{k-1}^o\bm R,\bm S_h)_{\mathcal T_h}-(\bm\phi_h,\nabla\cdot\bm \Pi_{k-1}^o\bm\sigma)_{\mathcal T_h}+\langle\widehat{\bm\phi}_h,\bm \Pi_{k-1}^o\bm\sigma\bm n\rangle_{\partial\mathcal T_h}\\
		&\qquad\qquad+\langle\alpha_2(\bm\Pi_{\ell}^{\partial}\bm \Pi_{k}^o\bm{\theta}-\bm\Pi_{\ell}^{\partial}\bm \theta),\bm\Pi_{\ell}^{\partial}\bm\phi_h-\widehat {\bm\phi}_h\rangle_{\partial\mathcal T_h}\\
		&\qquad\qquad+(\Pi_k^o p,\nabla\times\bm S_h)_{\mathcal T_h}-\langle\Pi_{k-1}^{\partial} p,\bm S_h\cdot\bm t\rangle_{\partial\mathcal T_h}+(\nabla\times\bm\phi_h,\bm \Pi_k^o p)_{\mathcal T_h}\\
		&\qquad\qquad-\langle \bm\phi_h\cdot\bm t,\Pi_{k-1}^{\partial} p\rangle_{\partial\mathcal T_h}-(q_h,\nabla\times\bm \Pi_{k-1}^o\bm R)_{\mathcal T_h}+\langle\widehat q_h,\bm \Pi_{k-1}^o\bm R\cdot\bm t\rangle_{\partial\mathcal T_h}\\
		&\qquad\qquad-(\nabla\times\bm \Pi_{k}^o\bm\theta,q_h)_{\mathcal T_h}+\langle \bm \Pi_{k}^o\bm\theta\cdot\bm t,\widehat q_h\rangle_{\partial\mathcal T_h}\\
		&\qquad\qquad+\langle\alpha_3(\Pi_{k-1}^{\partial}\Pi_k^op-\Pi_{k-1}^{\partial} p),\Pi_{k-1}^{\partial}q_h-\widehat q_h\rangle_{\partial\mathcal T_h}.
	\end{align*}
	then we introduce $\mathcal{I}_h$ to get
	\begin{align*}
		&\mathfrak{b}_h(\bm \Pi_{k-1}^o\bm \sigma,\bm \Pi_{k-1}^o \bm R,\bm \Pi_{k}^o \bm \theta,\bm\Pi_{\ell}^{\partial}\bm \theta,\Pi_k^o p,\Pi_{k-1}^{\partial} p;\bm \tau_h, \bm S_h,\bm\phi_h,\widehat{\bm \phi}_h,q_h,\widehat{q}_h)\\
		&\qquad=(\mathcal C^{-1}\bm\Pi_{k-1}^o\bm\sigma,\bm\tau_h)_{\mathcal T_h}+(\bm\Pi_k^o\bm\theta,\nabla\cdot\bm\tau_h)_{\mathcal T_h}-\langle \bm\Pi_{\ell}^{\partial}\bm\theta,\bm\tau_h\bm n\rangle_{\partial\mathcal T_h}\\
		&\qquad\qquad+(\lambda t^{-2} \bm \Pi_{k-1}^o\bm R,\bm S_h)_{\mathcal T_h}-(\bm{\mathcal{ I}}_h(\bm\phi_h,\widehat{\bm\phi}_h),\nabla\cdot\bm \Pi_{k-1}^o\bm\sigma)_{\mathcal T_h}\\
		&\qquad\qquad+\langle\bm{\mathcal{ I}}_h(\bm\phi_h,\widehat{\bm\phi}_h),\bm \Pi_{k-1}^o\bm\sigma\bm n\rangle_{\partial\mathcal T_h}+\langle\alpha_2(\bm\Pi_{\ell}^{\partial}\bm \Pi_{k}^o\bm{\theta}-\bm\Pi_{\ell}^{\partial}\bm \theta),\bm\Pi_{\ell}^{\partial}\bm\phi_h-\widehat {\bm\phi}_h\rangle_{\partial\mathcal T_h}\\
		&\qquad\qquad+(\Pi_k^o p,\nabla\times\bm S_h)_{\mathcal T_h}-\langle\Pi_{k-1}^{\partial} p,\bm S_h\cdot\bm t\rangle_{\partial\mathcal T_h}-\langle \bm\phi_h\cdot\bm t, \Pi_{k-1}^{\partial}p-p\rangle_{\partial{\mathcal{T}_h}}\\
		&\qquad\qquad+(\nabla\times\bm{\mathcal{ I}}_h(\bm\phi_h,\widehat{\bm\phi}_h),\bm \Pi_k^o p-p)_{\mathcal T_h}-(\bm{\mathcal{I}}_h(\bm\phi_h,\widehat{\bm\phi}_h),\nabla^{\perp}p)_{\mathcal T_h}\\
		&\qquad\qquad-\langle \bm{\mathcal{\bm I}}_h(\bm\phi_h,\widehat{\bm\phi}_h)\cdot\bm t,\Pi_{k}^o p-p\rangle_{\partial\mathcal T_h}-(\mathcal{I}_h(q_h,\widehat q_h),\nabla\times\bm \Pi_{k-1}^o\bm R)_{\mathcal T_h}\\
		&\qquad\qquad+\langle\mathcal{I}_h(q_h,\widehat{q}_h),\bm \Pi_{k-1}^o\bm R\cdot\bm t\rangle_{\partial\mathcal T_h}-(\nabla\times\bm \Pi_{k}^o\bm\theta,\mathcal{I}_h(q_h,\widehat q_h))_{\mathcal T_h}\\
		&\qquad\qquad+\langle \bm \Pi_{k}^o\bm\theta\cdot\bm t,\mathcal{I}_h(q_h,\widehat q_h)\rangle_{\partial\mathcal T_h}+\langle\alpha_3(\Pi_{k-1}^{\partial}\Pi_k^op-\Pi_{k-1}^{\partial} p),\Pi_{k-1}^{\partial}q_h-\widehat q_h\rangle_{\partial\mathcal T_h}.
	\end{align*}
	Then apply the orthogonality of projections ${\rm L}^2$  and $\mathcal{I}_h$ such as \eqref{pro3} and \eqref{pro4} to get 
	\begin{align*}
		&\mathfrak{b}_h(\bm \Pi_{k-1}^o\bm \sigma,\bm \Pi_{k-1}^o \bm R,\bm \Pi_{k}^o \bm \theta,\bm\Pi_{\ell}^{\partial}\bm \theta,\Pi_k^o p,\Pi_{k-1}^{\partial} p;\bm \tau_h, \bm S_h,\bm\phi_h,\widehat{\bm \phi}_h,q_h,\widehat{q}_h)\\
		&\qquad=(\mathcal C^{-1}\bm\sigma,\bm\tau_h)_{\mathcal T_h}-(\nabla\bm\theta,\bm\tau_h)_{\mathcal T_h}+(\lambda t^{-2}\bm R,\bm S_h)_{\mathcal T_h}\\
		&\qquad\qquad+(\nabla\bm{\mathcal{I}}_h(\bm\phi_h,\widehat{\bm\phi}_h),\bm \Pi_{k-1}^o\bm\sigma-\bm\sigma)_{\mathcal T_h}+(\nabla\bm{\mathcal{I}}_h(\bm\phi_h,\widehat{\bm\phi}_h),\bm\sigma)_{\mathcal T_h}\\
		&\qquad\qquad+\langle\alpha_2(\bm\Pi_{\ell}^{\partial}\bm \Pi_{k}^o\bm{\theta}-\bm\Pi_{\ell}^{\partial}\bm \theta),\bm\Pi_{\ell}^{\partial}\bm\phi_h-\widehat {\bm\phi}_h\rangle_{\partial\mathcal T_h}-\langle (\bm\phi_h-\bm\Pi^o_{k-1} \bm \phi_h)\cdot\bm t, \Pi_{k-1}^{\partial}p-p\rangle_{\partial{\mathcal{T}_h}}\\
		&\qquad\qquad-(\nabla^{\perp}p,\bm S_h)_{\mathcal T_h}+(\nabla\times\bm{\mathcal{I}}_h(\bm\phi_h,\widehat{\bm\phi}_h),\bm \Pi_k^o p-p)_{\mathcal T_h}-(\bm{\mathcal{I}}_h(\bm\phi_h,\widehat{\bm\phi}_h),\nabla^{\perp}p)_{\mathcal T_h}\\
		&\qquad\qquad-\langle \bm{\mathcal{I}}_h(\bm\phi_h,\widehat{\bm\phi}_h)\cdot\bm t,\Pi_{k}^o p-p\rangle_{\partial\mathcal T_h}+(\nabla^{\perp}\mathcal{I}_h(q_h,\widehat q_h),\bm \Pi_{k-1}^o\bm R-\bm R)_{\mathcal T_h}\\
		&\qquad\qquad+(\nabla^{\perp}\mathcal{I}_h(q_h,\widehat q_h),\bm R)_{\mathcal T_h}+(\bm \Pi_{k}^o\bm\theta-\bm \theta,\nabla^{\perp}\mathcal{I}_h(q_h,\widehat q_h))_{\mathcal T_h}\\
		&\qquad\qquad+(\bm \theta,\nabla^{\perp}\mathcal{I}_h(q_h,\widehat q_h))_{\mathcal T_h}+\langle\alpha_3(\Pi_{k-1}^{\partial}\Pi_k^op-\Pi_{k-1}^{\partial} p),\Pi_{k-1}^{\partial}q_h-\widehat q_h\rangle_{\partial\mathcal T_h}.
	\end{align*}
	The last step is straightforward consequence of \eqref{stage2}
	\begin{align*}
		&\mathfrak{b}_h(\bm \Pi_{k-1}^o\bm \sigma,\bm \Pi_{k-1}^o \bm R,\bm \Pi_{k}^o \bm \theta,\bm\Pi_{\ell}^{\partial}\bm \theta,\Pi_k^o p,\Pi_{k-1}^{\partial} p;\bm \tau_h, \bm S_h,\bm\phi_h,\widehat{\bm \phi}_h,q_h,\widehat{q}_h)\\
		&\qquad=(\nabla\bm{\mathcal{I}}_h(\bm\phi_h,\widehat{\bm\phi}_h),\bm \Pi_{k-1}^o\bm\sigma-\bm\sigma)_{\mathcal T_h}+(\bm{\mathcal{I}}_h(\bm\phi_h,\widehat{\bm\phi}_h),\bm L+\bm f)_{\mathcal T_h}\\
		&\qquad\qquad+\langle\alpha_2(\bm\Pi_{\ell}^{\partial}\bm \Pi_{k}^o\bm{\theta}-\bm\Pi_{\ell}^{\partial}\bm \theta),\bm\Pi_{\ell}^{\partial}\bm\phi_h-\widehat {\bm\phi}_h\rangle_{\partial\mathcal T_h}-\langle (\bm\phi_h-\bm\Pi^o_{k-1} \bm \phi_h)\cdot\bm t, \Pi_{k-1}^{\partial}p-p\rangle_{\partial{\mathcal{T}_h}}\\
		&\qquad\qquad+(\nabla\times\bm{\mathcal{I}}_h(\bm\phi_h,\widehat{\bm\phi}_h),\bm \Pi_k^o p-p)_{\mathcal T_h}-\langle (\bm{\mathcal{I}}_h(\bm\phi_h,\widehat{\bm\phi}_h)-\bm\phi_h)\cdot\bm t,\Pi_{k}^o p-p\rangle_{\partial\mathcal T_h}\\
		&\qquad\qquad+(\nabla^{\perp}\mathcal{I}_h(q_h,\widehat q_h),\bm \Pi_{k-1}^o\bm R-\bm R)_{\mathcal T_h}+(\bm \Pi_{k}^o\bm\theta-\bm \theta,\nabla^{\perp}\mathcal{I}_h(q_h,\widehat q_h))_{\mathcal T_h}\\
		&\qquad\qquad+\langle\alpha_3(\Pi_{k-1}^{\partial}\Pi_k^op-\Pi_{k-1}^{\partial} p),\Pi_{k-1}^{\partial}q_h-\widehat q_h\rangle_{\partial\mathcal T_h}.
	\end{align*}
\end{proof}\par
Also, we introduce the follow expressions to make the argument more concise.
\begin{align*}
	&\bm \xi_{\bm \sigma}:= \bm \Pi_{k-1}^o  \bm \sigma -\bm \sigma_h,\quad &&\bm \xi_{\bm R}:= \bm \Pi_{k-1}^o \bm R-  \bm R_h,\quad&\bm \xi_{\bm \theta}:=\bm \Pi_{k}^o \bm \theta- \bm \theta_h, \\
	&\xi_{p}:= \Pi_k^o p-p_h,\quad&&\bm \xi_{\widehat{\bm \theta}}:=  \bm\Pi_{\ell}^{\partial}\bm \theta-\widehat{\bm \theta}_h,\quad&\xi_{\widehat p}:= \Pi_{k-1}^{\partial} p-\widehat{p}_h.
\end{align*}\par                    
\begin{lemma}\label{mainlemma}
	Let $(\bm\sigma,\bm R,\bm \theta, p) $ be the solution of \eqref{stage2} and let $(\bm\sigma_h,\bm R_h,\bm\theta_h,\widehat{\bm \theta}_h,p_h,\widehat{p}_h)$ be the numerical solution of \eqref{discrete stage2},Then, under the
	regularity condition, the following error estimate holds:
	\begin{equation}
		\begin{aligned}
			\|(\bm\xi_{\bm\sigma},\bm\xi_{\bm R},\bm\xi_{\bm\theta},\bm\xi_{\widehat{\bm\theta}},\xi_p,\xi_{\widehat p})\|_{\mathfrak{b}_h}\leq C h^k(\|\bm f\|_{k-1}+\|g\|_{k-1}+\|r\|_{k+1}+\|\bm \sigma \|_k+\|\bm \theta\|_{k+1}+t\|p\|_{k+1}).
		\end{aligned}
	\end{equation}
\end{lemma}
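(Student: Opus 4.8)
The plan is to turn the representation formula \eqref{errorprojection} into an equation for the projection errors and then to invoke the discrete LBB condition \eqref{lbbinequality}. Subtracting the discrete equations \eqref{discrete stage2}, tested against an arbitrary tuple $(\bm\tau_h,\bm S_h,\bm\phi_h,\widehat{\bm\phi}_h,q_h,\widehat q_h)$, from \eqref{errorprojection}, and splitting
$(\bm L+\bm f,\bm{\mathcal I}_h(\bm\phi_h,\widehat{\bm\phi}_h))_{\mathcal T_h}-(\bm L_h+\bm f,\bm\phi_h)_{\mathcal T_h}=(\bm L+\bm f,\bm{\mathcal I}_h(\bm\phi_h,\widehat{\bm\phi}_h)-\bm\phi_h)_{\mathcal T_h}+(\bm L-\bm L_h,\bm\phi_h)_{\mathcal T_h}$,
one gets that $\mathfrak b_h(\bm\xi_{\bm\sigma},\bm\xi_{\bm R},\bm\xi_{\bm\theta},\bm\xi_{\widehat{\bm\theta}},\xi_p,\xi_{\widehat p};\bm\tau_h,\bm S_h,\bm\phi_h,\widehat{\bm\phi}_h,q_h,\widehat q_h)$ equals the sum of the interpolation-error terms on the right of \eqref{errorprojection}, the term $(\bm L+\bm f,\bm{\mathcal I}_h(\bm\phi_h,\widehat{\bm\phi}_h)-\bm\phi_h)_{\mathcal T_h}$, and the coupling term $(\bm L-\bm L_h,\bm\phi_h)_{\mathcal T_h}$. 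By \eqref{lbbinequality} applied to $(\bm\xi_{\bm\sigma},\dots,\xi_{\widehat p})$ it then suffices to bound each of these, for every test tuple, by $C h^{k}\big(\|\bm f\|_{k-1}+\|g\|_{k-1}+\|r\|_{k+1}+\|\bm\sigma\|_{k}+\|\bm\theta\|_{k+1}+t\|p\|_{k+1}\big)\,\|(\bm\tau_h,\bm S_h,\bm\phi_h,\widehat{\bm\phi}_h,q_h,\widehat q_h)\|_{\mathfrak b_h}$ with a constant independent of $t$.

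\textbf{Routine terms.}
Throughout one uses that, by the definition \eqref{norm} of $\|\cdot\|_{\mathfrak b_h}$, the HDG-Poincar\'e inequality \eqref{HDGpoincare}, the trace and inverse estimates in \eqref{L2projectionapproximation}, and the interpolation bound \eqref{pro2}, the quantities $\|\bm\phi_h\|_{\mathcal T_h}$, $\|\nabla\bm{\mathcal I}_h(\bm\phi_h,\widehat{\bm\phi}_h)\|_{\mathcal T_h}$, $\|\nabla\times\bm{\mathcal I}_h(\bm\phi_h,\widehat{\bm\phi}_h)\|_{\mathcal T_h}$, $\|\mathfrak h^{-1/2}(\bm\phi_h-\widehat{\bm\phi}_h)\|_{\partial\mathcal T_h}$ and $\mathfrak h^{-1}\|\bm{\mathcal I}_h(\bm\phi_h,\widehat{\bm\phi}_h)-\bm\phi_h\|_{\mathcal T_h}$ are all $\le C\|(\bm\tau_h,\dots)\|_{\mathfrak b_h}$. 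The coupling term is handled by Cauchy--Schwarz, the HDG-Poincar\'e inequality and \Cref{theorem1}, giving $(\bm L-\bm L_h,\bm\phi_h)_{\mathcal T_h}\le C h^{k}(\|r\|_{k+1}+\|g\|_{k-1})\,\|(\bm\tau_h,\dots)\|_{\mathfrak b_h}$. For the terms not involving $\bm R$ or the rotational interpolant $\mathcal I_h(q_h,\widehat q_h)$ — namely $(\bm L+\bm f,\bm{\mathcal I}_h(\bm\phi_h,\widehat{\bm\phi}_h)-\bm\phi_h)$, $(\nabla\bm{\mathcal I}_h(\bm\phi_h,\widehat{\bm\phi}_h),\bm\Pi_{k-1}^{o}\bm\sigma-\bm\sigma)$, the $\alpha_2$-stabilization term, and the three terms pairing $\bm{\mathcal I}_h(\bm\phi_h,\widehat{\bm\phi}_h)$ or $\bm\phi_h$ with $\Pi_{k-1}^{\partial}p-p$ or $\Pi_{k}^{o}p-p$ — one inserts an $\mathrm L^2$-projection to exploit orthogonality (for instance $(\bm L+\bm f,\bm{\mathcal I}_h(\bm\phi_h,\widehat{\bm\phi}_h)-\bm\phi_h)=(\bm L+\bm f-\bm\Pi_{k}^{o}(\bm L+\bm f),\bm{\mathcal I}_h(\bm\phi_h,\widehat{\bm\phi}_h)-\bm\phi_h)$ by \eqref{pro3}), then applies the approximation estimates \eqref{3}, \eqref{5}, \eqref{pro1}; this yields contributions of size $C h^{k}(\|\bm f\|_{k-1}+\|r\|_{k+1}+\|\bm\sigma\|_{k}+\|\bm\theta\|_{k+1}+\|p\|_{k})\,\|(\bm\tau_h,\dots)\|_{\mathfrak b_h}$, and since $\|p\|_{k}\le C(\|\bm f\|_{k-1}+\|g\|_{k-1})$ by \Cref{assumption of regularity} these are of the required form.

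\textbf{The $t$-uniform terms and the main obstacle.}
The delicate part is the pair of terms $(\nabla^{\perp}\mathcal I_h(q_h,\widehat q_h),\bm\Pi_{k-1}^{o}\bm R-\bm R)_{\mathcal T_h}$, $(\bm\Pi_{k}^{o}\bm\theta-\bm\theta,\nabla^{\perp}\mathcal I_h(q_h,\widehat q_h))_{\mathcal T_h}$ and the $\alpha_3$-stabilization term, since for the rotational interpolant the norm \eqref{norm} only provides the $t$-weighted control $t\,\|\nabla^{\perp}\mathcal I_h(q_h,\widehat q_h)\|_{\mathcal T_h}\le C\|(\bm\tau_h,\dots)\|_{\mathfrak b_h}$ (coming from the weights $t^{2}\|\nabla^{\perp}q_h\|^{2}$ and $\alpha_3=\mathfrak h+t^{2}\mathfrak h^{-1}$). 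The first term exploits that $\bm R=\lambda^{-1}t^{2}\nabla^{\perp}p$ carries the explicit factor $t^{2}$, so $\|\bm\Pi_{k-1}^{o}\bm R-\bm R\|_{\mathcal T_h}\le C h^{k}t^{2}\|p\|_{k+1}$ by \eqref{5}, the $t^{2}$ cancelling the $t^{-1}$ and producing exactly $C h^{k}t\|p\|_{k+1}\,\|(\bm\tau_h,\dots)\|_{\mathfrak b_h}$ — which is what forces the term $t\|p\|_{k+1}$ into the estimate. The $\bm\theta$-term is rewritten using the orthogonality $\bm\Pi_{k}^{o}\bm\theta-\bm\theta\perp[\mathcal P_{k}]^{2}$ (which replaces $\nabla^{\perp}\mathcal I_h(q_h,\widehat q_h)$ by $\nabla^{\perp}(\mathcal I_h(q_h,\widehat q_h)-q_h)$), an integration by parts legitimised by $\mathcal I_h(q_h,\widehat q_h)\in\widehat H^{1}(\Omega)$ (recall $q_h\in\mathrm L_0^{2}(\Omega)$) and $\bm\theta|_{\partial\Omega}=\bm 0$, and the continuous relation $(\bm\theta+\bm R,\nabla^{\perp}q)=0$ for $q\in\widehat H^{1}(\Omega)$ from \eqref{stage2} to keep the $t$-powers balanced; the $\alpha_3$-stabilization term is treated similarly, if needed by distinguishing the regimes $t\lesssim\mathfrak h$ (where $\alpha_3\simeq\mathfrak h$) and $t\gtrsim\mathfrak h$ (where $\alpha_3\simeq t^{2}\mathfrak h^{-1}$). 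I expect this $t$-bookkeeping — verifying that the weights in $\|\cdot\|_{\mathfrak b_h}$ are tuned so that each such contribution is $\le C h^{k}(\|\bm\theta\|_{k+1}+t\|p\|_{k+1})\,\|(\bm\tau_h,\dots)\|_{\mathfrak b_h}$ uniformly in $t\in(0,1]$ — to be the principal difficulty. Collecting all bounds and dividing by $\|(\bm\tau_h,\dots)\|_{\mathfrak b_h}$, \eqref{lbbinequality} yields the assertion.
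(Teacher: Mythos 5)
Your overall strategy---the error equation obtained from \eqref{errorprojection}, term-by-term bounds, and the discrete LBB condition \eqref{lbbinequality}---is exactly the paper's, and your handling of the routine terms, of the coupling term $(\bm L-\bm L_h,\cdot)_{\mathcal T_h}$, and of $(\nabla^{\perp}\mathcal I_h(q_h,\widehat q_h),\bm\Pi_{k-1}^{o}\bm R-\bm R)_{\mathcal T_h}$ (using $\bm R=\lambda^{-1}t^{2}\nabla^{\perp}p$) matches the paper. However, there is a genuine gap precisely where you anticipate the "principal difficulty": the term $(\bm\Pi_{k}^{o}\bm\theta-\bm\theta,\nabla^{\perp}\mathcal I_h(q_h,\widehat q_h))_{\mathcal T_h}$. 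After your orthogonality step and integration by parts you are still left, via \eqref{pro1}--\eqref{pro2}, with a factor of the form $\|\mathfrak h\nabla^{\perp}q_h\|_{\mathcal T_h}$ (plus edge contributions) multiplying $Ch^{k}\|\bm\theta\|_{k+1}$. The test norm \eqref{norm} controls only $t\|\nabla^{\perp}q_h\|_{\mathcal T_h}$ and $\|\alpha_3^{1/2}(\Pi_{k-1}^{\partial}q_h-\widehat q_h)\|_{\partial\mathcal T_h}$, so for $t\lesssim\mathfrak h$ the quantity $\|\mathfrak h\nabla^{\perp}q_h\|_{\mathcal T_h}$ is not bounded by $\|(\bm\tau_h,\bm S_h,\bm\phi_h,\widehat{\bm\phi}_h,q_h,\widehat q_h)\|_{\mathfrak b_h}$, and none of the devices you list repairs this: the continuous relation $(\bm\theta+\bm R,\nabla^{\perp}q)=0$ involves the exact solution and gives no control of the discrete test function $q_h$, and the regime splitting $t\lesssim\mathfrak h$ versus $t\gtrsim\mathfrak h$ only helps in the second regime. (A milder instance of the same issue appears in the $\alpha_3$-stabilization term through the $\mathfrak h^{1/2}$ part of $\alpha_3^{1/2}$.)

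The paper closes this step with an extra idea your outline is missing: it bounds $\|\mathfrak h\nabla^{\perp}q_h\|_{\mathcal T_h}\le\sup_{\bm 0\neq\bm\theta_h\in\mathcal Y_h}(\nabla^{\perp}q_h,\bm\theta_h)_{\mathcal T_h}/\|\mathfrak h^{-1}\bm\theta_h\|_{\mathcal T_h}$ and then rewrites $(\nabla^{\perp}q_h,\bm\theta_h)_{\mathcal T_h}$ through $\mathfrak b_h(\bm 0,\bm 0,\bm\theta_h,\bm 0,0,0;\bm\tau_h,\bm S_h,\bm\phi_h,\widehat{\bm\phi}_h,q_h,\widehat q_h)$ plus terms controlled, after inverse and trace inequalities \eqref{4}, \eqref{6}, by $\|\bm\tau_h\|_{\mathcal T_h}$, $\|\alpha_2^{1/2}(\bm\Pi_{\ell}^{\partial}\bm\phi_h-\widehat{\bm\phi}_h)\|_{\partial\mathcal T_h}$ and $\|\alpha_3^{1/2}(\Pi_{k-1}^{\partial}q_h-\widehat q_h)\|_{\partial\mathcal T_h}$. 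This turns the uncontrollable factor into the dual quantity $\sup\mathfrak b_h(\cdot\,;\bm\tau_h,\dots)/\|\cdot\|_{\mathfrak b_h}$, which enters the final estimate multiplied by $Ch^{k}\|\bm\theta\|_{k+1}$ and is absorbed when the LBB condition is invoked. Without this duality step (or an equivalent device) your argument does not close uniformly in $t\in(0,1]$; the remainder of your proposal is consistent with the paper's proof.
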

\begin{proof}
	By the equation \eqref{errorprojection}, we get the following error equation
	\begin{align*}
		&\mathfrak{b}_h( \bm \xi_{\bm \sigma},\bm \xi_{\bm R},\bm \xi_{\bm \theta}, \bm \xi_{\widehat{\bm \theta}}, \xi_{p}, \xi_{\widehat p};\bm \tau_h, \bm S_h,\bm\phi_h,\widehat{\bm \phi}_h,q_h,\widehat{q}_h)\\
		&\qquad=[(\bm L-\bm L_h, \bm{\mathcal{I}}_h(\bm\phi_h,\widehat{\bm\phi}_h))_{\mathcal T_h}+(\bm f-\bm\Pi_{k-1}^o \bm f, \bm{\mathcal{I}}_h(\bm\phi_h,\widehat{\bm\phi}_h)-\bm\phi_h)_{\mathcal T_h}\\
		&\qquad\qquad+(\nabla\bm{\mathcal{I}}_h(\bm\phi_h,\widehat{\bm\phi}_h),\bm \Pi_{k-1}^o\bm\sigma-\bm\sigma)_{\mathcal T_h}-\langle (\bm\phi_h-\bm\Pi^o_{k-1} \bm \phi_h)\cdot\bm t, \Pi_{k-1}^{\partial}p-p\rangle_{\partial{\mathcal{T}_h}}\\
		&\qquad\qquad+(\nabla\times\bm{\mathcal{I}}_h(\bm\phi_h,\widehat{\bm\phi}_h),\bm \Pi_k^o p-p)_{\mathcal T_h}-\langle (\bm{\mathcal{I}}_h(\bm\phi_h,\widehat{\bm\phi}_h)-\bm\phi_h)\cdot\bm t,\Pi_{k}^o p-p\rangle_{\partial\mathcal T_h}]\\
		&\qquad\qquad+\langle\alpha_2(\bm\Pi_{\ell}^{\partial}\bm \Pi_{k}^o\bm{\theta}-\bm\Pi_{\ell}^{\partial}\bm \theta),\bm\Pi_{\ell}^{\partial}\bm\phi_h-\widehat {\bm\phi}_h\rangle_{\partial\mathcal T_h}\\
		&\qquad\qquad+[(\nabla^{\perp}\mathcal{I}_h(q_h,\widehat q_h),\bm \Pi_{k-1}^o\bm R-\bm R)_{\mathcal T_h}+(\bm \Pi_{k}^o\bm\theta-\bm \theta,\nabla^{\perp}\mathcal{I}_h(q_h,\widehat q_h))_{\mathcal T_h}]\\
		&\qquad\qquad+\langle\alpha_3(\Pi_{k-1}^{\partial}\Pi_k^op-\Pi_{k-1}^{\partial} p),\Pi_{k-1}^{\partial}q_h-\widehat q_h\rangle_{\partial\mathcal T_h}\\
		&\qquad:=E_4+E_5+E_6+E_7.
	\end{align*}\par
	For the term $E_4$, we use the same argument as $E_1,E_2$ with the estimate $ \langle (\bm\phi_h-\bm\Pi^o_{k-1} \bm \phi_h)\cdot\bm t, \Pi_{k-1}^{\partial}p-p\rangle_{\partial{\mathcal{T}_h}} \leq Ch^k\|\nabla \bm\phi_h\|_{\mathcal{T}_h}|p|_k $ to get the following estimate
	\begin{align*}
		E_4&\leq Ch^k(\|\bm f\|_{k-1}+\|g\|_{k-1}+\|r\|_{k+1}+\|\bm\sigma\|_k+\|p\|_k+\|\bm R\|_k)\\
		&\qquad\qquad\cdot(\|\nabla \bm\phi_h\|_{\mathcal{T}_h}+\|\alpha_2^{1/2}(\bm\Pi_{\ell}^{\partial}\bm\phi_h-\widehat{\bm\phi}_h)\|_{\partial\mathcal{T}_h}).
	\end{align*}
	For the term $E_5$, it holds
	\begin{align*}
		E_5&=\langle\alpha_2(\bm\Pi_{\ell}^{\partial}\bm \Pi_{k}^o \bm \theta-\bm\Pi_{\ell}^{\partial}\bm \theta),\bm\Pi_{\ell}^{\partial}\bm \phi_h-\widehat{\bm \phi}_h\rangle_{\partial\mathcal{T}_h} \\
		&\leq\|\alpha_2^{1/2}(\bm \Pi_{k}^o \bm \theta-\bm \theta)\|_{\partial\mathcal{T}_h}\|\alpha_2^{1/2}(\bm\Pi_{\ell}^{\partial}\bm{\phi}_h-\widehat {\bm\phi}_h)\|^2_{\partial\mathcal{T}_h} \\
		&\leq C h^k\|\bm\theta\|_{k+1}\|\alpha_2^{1/2}(\bm\Pi_{\ell}^{\partial}\bm{\phi}_h-\widehat {\bm\phi}_h)\|_{\partial\mathcal{T}_h}.
	\end{align*}
	For the term $E_6$, the main steps of analysis is a little bit different with $E_1,E_2$.  So we prove the details
	\begin{align*}
		E_6&=(\nabla^{\perp}\mathcal{I}_h(q_h,\widehat q_h),\bm \Pi_{k-1}^o\bm R-\bm R)_{\mathcal T_h}+(\bm \Pi_{k}^o\bm\theta-\bm \theta,\nabla^{\perp}\mathcal{I}_h(q_h,\widehat q_h))_{\mathcal T_h}\\
		&=(t\nabla^{\perp}\mathcal{I}_h(q_h,\widehat q_h),t^{-1}(\bm \Pi_{k-1}^o\bm R-\bm R))_{\mathcal T_h}+(\mathfrak{h}^{-1}(\bm \Pi_{k}^o\bm\theta-\bm \theta),\mathfrak{h}\nabla^{\perp}\mathcal{I}_h(q_h,\widehat q_h))_{\mathcal T_h}\\
		&\text{(using Cauchy-Schwartz’s inequality and \eqref{pro2} and \eqref{5})}\\
		&\leq C h^k\|\bm\theta\|_{k+1}(\|\mathfrak{h}\nabla^{\perp} q_h\|_{\mathcal{T}_h}+\|\mathfrak{h}^{1/2}(q_h-\widehat{q}_h)\|_{\partial\mathcal{T}_h})\\
		&\qquad+C h^kt^{-1}\|\bm R\|_k (t\|\nabla^{\perp} q_h\|_{\mathcal{T}_h}+\|t\mathfrak{h}^{-1/2}(q_h-\widehat{q}_h)\|_{\partial\mathcal{T}_h})\\
		&\text{(using \eqref{3})}\\
		&\leq C h^k\|\bm\theta\|_{k+1}(\|\mathfrak{h}\nabla^{\perp} q_h\|_{\mathcal{T}_h}+\|\alpha_3^{1/2}(\Pi^{\partial}_{k-1}q_h-\widehat{q}_h)\|_{\partial\mathcal{T}_h})\\
		&\qquad+C h^kt^{-1}\|\bm R\|_k (t\|\nabla^{\perp} q_h\|_{\mathcal{T}_h}+\|\alpha_3^{1/2}(\Pi^{\partial}_{k-1}q_h-\widehat{q}_h)\|_{\partial\mathcal{T}_h}).
	\end{align*}\par
	To deal with the term $\|\mathfrak{h}\nabla^{\perp}q_h\|_{\mathcal{T}_h}$, we proceed as follows. Firstly it's ready to get
	\begin{align*}
		\|\mathfrak{h}\nabla^{\perp}q_h\|_{\mathcal{T}_h}&=\frac{(\nabla^{\perp}q_h,\mathfrak{h}^2\nabla^{\perp}q_h)_{\mathcal{T}_h}}{\|\mathfrak{h}\nabla^{\perp}q_h\|_{\mathcal{T}_h}}\leq
		\sup_{\bm 0\neq\bm\theta_h\in\mathcal{Y}_h}\frac{(\nabla^{\perp}q_h,\bm\theta_h)_{\mathcal{T}_h}}{\|\mathfrak{h}^{-1}\bm\theta_h\|_{\mathcal{T}_h}},
	\end{align*}
	Next, we work on the numerator in the above expression. We have
	\begin{align*}
		(\nabla^{\perp}q_h,\bm\theta_h)&=\mathfrak{b}_h( \bm 0,\bm 0,\bm \theta_h, \bm 0, 0, 0;\bm \tau_h, \bm S_h,\bm\phi_h,\widehat{\bm \phi}_h,q_h,\widehat{q}_h)\\
		&\qquad-(\bm\theta_h,\nabla\cdot\bm\tau)_{\mathcal T_h}-\langle\alpha_2(\bm\Pi_{\ell}^{\partial}\bm{\theta}_h,\bm\Pi_{\ell}^{\partial}\bm\phi_h-\widehat {\bm\phi}_h\rangle_{\partial\mathcal T_h}+\langle\ \bm \theta_h\cdot \bm t , \Pi_{k-1}^\partial q_h-\widehat{q}_h\rangle_{\partial\mathcal{T}_h}.
	\end{align*}
	Last, by the triangle inequality, \eqref{4}, \eqref{2}, \eqref{6} and Cauchy-Schwartz’s inequality, we conclude that
	\begin{align*}
		\|\mathfrak h\nabla^{\perp}q_h\|_{\mathcal{T}_h}&\leq\sup_{\bm 0\neq\bm\theta_h\atop\in\mathcal{Y}_h}\frac{(\bm\theta_h,\nabla\cdot\bm\tau_h)_{\mathcal T_h}+\langle\alpha_2(\bm\Pi_{\ell}^{\partial}\bm{\theta}_h,\bm\Pi_{\ell}^{\partial}\bm\phi_h-\widehat {\bm\phi}_h\rangle_{\partial\mathcal T_h}+\langle\ \bm \theta_h\cdot \bm t ,\Pi_{k-1}^\partial q_h-\widehat{q}_h\rangle_{\partial\mathcal{T}_h}}{\|\mathfrak{h}^{-1}\bm\theta_h\|_{\mathcal{T}_h}}\\
		&\qquad+\sup_{\bm 0\neq\bm\theta_h\in\mathcal{Y}_h}\frac{\mathfrak{b}_h( \bm 0,\bm 0,\bm \theta_h, \bm 0, 0, 0;\bm \tau_h, \bm S_h,\bm\phi_h,\widehat{\bm \phi}_h,q_h,\widehat{q}_h)}{\|\mathfrak{h}^{-1}\bm\theta_h\|_{\mathcal{T}_h}}\\
		&\leq C\sup_{\bm 0\neq\bm\theta_h\in\mathcal{Y}_h}\frac{\mathfrak{b}_h( \bm 0,\bm 0,\bm \theta_h, \bm 0, 0, 0;\bm \tau_h, \bm S_h,\bm\phi_h,\widehat{\bm \phi}_h,q_h,\widehat{q}_h)}{\|\nabla\bm\theta_h\|_{\mathcal{T}_h}}\\&\qquad+C(\|\bm \tau_h\|_{\mathcal{T}_h}+\|\alpha_2^\frac{1}{2}(\bm\Pi_{\ell}^{\partial}\bm{\phi}_h-\widehat {\bm\phi}_h)\|_{\partial\mathcal{T}_h}+\|\alpha_3^\frac{1}{2}(\Pi_{k-1}^{\partial} q_h-\widehat{q}_h)\|_{\partial\mathcal{T}_h}) \\
		&\leq C\sup_{\color{black}\bm 0\neq (\bm \sigma_h, \bm R_h,\bm \theta_h, \widehat{\bm \theta}_h,p_h,\widehat{p}_h)\atop\in \Sigma_h \times\mathcal{S}_h\times \mathcal{Y}_h \times \widehat{\mathcal{Y}}_h\times\mathcal{Q}_h\times\widehat{\mathcal{Q}}_h}\frac{\mathfrak{b}_h(\bm \sigma_h, \bm R_h,\bm \theta_h, \widehat{\bm \theta}_h,p_h,\widehat{p}_h;\bm \tau_h, \bm S_h,\bm \phi_h, \widehat{\bm \phi}_h,q_h,\widehat{q}_h)}{\|(\bm \sigma_h, \bm R_h,\bm \theta_h, \widehat{\bm \theta}_h,p_h,\widehat{p}_h)\|_{\mathfrak{b}_h}}\\
		&\qquad+C(\|\bm \tau_h\|_{\mathcal{T}_h}+\|\alpha_2^\frac{1}{2}(\bm\Pi_{\ell}^{\partial}\bm{\phi}_h-\widehat {\bm\phi}_h)\|_{\partial\mathcal{T}_h}+\|\alpha_3^\frac{1}{2}(\Pi_{k-1}^{\partial} q_h-\widehat{q}_h)\|_{\partial\mathcal{T}_h}).
	\end{align*}
	As for the term $E_7$, we use the same arguments as $E_6$ to get
	\begin{align*}
		E_7&=\langle\alpha_3(\Pi_{k-1}^{\partial}\Pi_k^o p-\Pi_{k-1}^{\partial}p),\Pi_{k-1}^{\partial}q_h-\widehat{q}_h\rangle_{\partial\mathcal{T}_h} \\
		&\leq Ch^k t\|p\|_{k+1}\|\alpha_3^\frac{1}{2}(\Pi_{k-1}^{\partial}q_h-\widehat {q}_h)\|_{\partial\mathcal{T}_h}.
	\end{align*}
	Combining the above estimates of $E_i$, we get
	\begin{align*}
		&\quad \ \mathfrak{b}_h( \bm \xi_{\bm \sigma},\bm e_{\bm R},\bm \xi_{\bm \theta}, \bm \xi_{\widehat{\bm \theta}}, \xi_{p}, \xi_{\widehat p};\bm \tau_h, \bm S_h,\bm\phi_h,\widehat{\bm \phi}_h,q_h,\widehat{q}_h)\\
		& \leq C h^k(\|r\|_{k+1}+\|\bm \sigma \|_k+t^{-1}\|\bm R \|_k+\|\bm \theta\|_{k+1}+t\|p\|_{k+1})\\
		& \qquad \cdot (\|\bm \tau_h\|_{\mathcal{T}_h}+\|\nabla\bm\phi_h\|_{\mathcal{T}_h}+\|\alpha_2^\frac{1}{2}(\bm\Pi_{\ell}^{\partial}\bm{\phi}_h-\widehat {\bm\phi}_h)\|_{\partial\mathcal{T}_h}+t\|\nabla^{\perp}q_h\|_{\mathcal{T}_h}+\|\alpha_3^\frac{1}{2}(\Pi_{k-1}^{\partial} q_h-\widehat{q}_h)\|_{\partial\mathcal{T}_h})\\
		&\quad \ +\sup_{\bm 0\neq (\bm \sigma_h, \bm R_h,\bm \theta_h, \widehat{\bm \theta}_h,p_h,\widehat{p}_h)\atop\in \Sigma_h \times\mathcal{S}_h\times \mathcal{Y}_h \times \widehat{\mathcal{Y}}_h\times\mathcal{Q}_h\times\widehat{\mathcal{Q}}_h}\frac{\mathfrak{b}_h(\bm \sigma_h, \bm R_h,\bm \theta_h, \widehat{\bm \theta}_h,p_h,\widehat{p}_h;\bm \tau_h, \bm S_h,\bm \phi_h, \widehat{\bm \phi}_h,q_h,\widehat{q}_h)}{\|(\bm \sigma_h, \bm R_h,\bm \theta_h, \widehat{\bm \theta}_h,p_h,\widehat{p}_h)\|_{\mathfrak{b}_h}}\cdot C h^k\|\bm \theta \|_{k+1}.
	\end{align*}
	Finally, by discrete LBB condition \eqref{lbbinequality}, we could prove the desired conclusion.
\end{proof}\par
In light of \Cref{mainlemma} and the triangle inequality, we easily obtain the following error estimate:
\begin{theorem}\label{theorem2}
	Under the condition of \Cref{mainlemma}, it holds 
	\begin{equation}
		\begin{aligned}
			&\|\bm \sigma-\bm \sigma_h\|_{\mathcal{T}_h}+t^{-1}\|\bm R-\bm R_h\|_{\mathcal{T}_h}+\|\nabla\bm\theta-\nabla\bm\theta_h\|_{\mathcal{T}_h}+t\|\nabla^{\perp}p-\nabla^{\perp}p_h\|_{\mathcal{T}_h}\\
			&\qquad\leq C h^k(\|\bm f\|_{k-1}+\|g\|_{k-1}+\|r\|_{k+1}+\|\bm \sigma \|_k+\|\bm \theta\|_{k+1}+t\|p\|_{k+1}).
		\end{aligned}
	\end{equation}
\end{theorem}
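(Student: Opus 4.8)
The plan is to derive the stated bound by combining the energy--norm estimate of \Cref{mainlemma} with the triangle inequality and the $\mathrm{L}^2$--projection approximation property \eqref{5}, being careful to keep every power of $t$ on the favourable side so that the final constant is $t$--independent.

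First I would split each of the four terms on the left--hand side into a projection error plus a discrete error, using exactly the projections that enter the definitions of $\bm\xi_{\bm\sigma},\bm\xi_{\bm R},\bm\xi_{\bm\theta},\xi_p$. For instance
$$\|\bm\sigma-\bm\sigma_h\|_{\mathcal T_h}\le\|\bm\sigma-\bm\Pi_{k-1}^o\bm\sigma\|_{\mathcal T_h}+\|\bm\xi_{\bm\sigma}\|_{\mathcal T_h},\qquad \|\nabla\bm\theta-\nabla\bm\theta_h\|_{\mathcal T_h}\le\|\nabla(\bm\theta-\bm\Pi_k^o\bm\theta)\|_{\mathcal T_h}+\|\nabla\bm\xi_{\bm\theta}\|_{\mathcal T_h},$$
and likewise for the $\bm R$-- and $p$--terms. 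The four discrete pieces $\|\bm\xi_{\bm\sigma}\|_{\mathcal T_h}$, $t^{-1}\|\bm\xi_{\bm R}\|_{\mathcal T_h}$, $\|\nabla\bm\xi_{\bm\theta}\|_{\mathcal T_h}$ and $t\|\nabla^{\perp}\xi_p\|_{\mathcal T_h}$ are, by the very definition \eqref{norm}, bounded by $\|(\bm\xi_{\bm\sigma},\bm\xi_{\bm R},\bm\xi_{\bm\theta},\bm\xi_{\widehat{\bm\theta}},\xi_p,\xi_{\widehat p})\|_{\mathfrak b_h}$, so \Cref{mainlemma} immediately controls all of them by $Ch^k(\|\bm f\|_{k-1}+\|g\|_{k-1}+\|r\|_{k+1}+\|\bm\sigma\|_k+\|\bm\theta\|_{k+1}+t\|p\|_{k+1})$.

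Next I would estimate the projection errors with \eqref{5}: $\|\bm\sigma-\bm\Pi_{k-1}^o\bm\sigma\|_{\mathcal T_h}\le Ch^k|\bm\sigma|_k$, $\|\nabla(\bm\theta-\bm\Pi_k^o\bm\theta)\|_{\mathcal T_h}\le Ch^k|\bm\theta|_{k+1}$, and $t\|\nabla^{\perp}(p-\Pi_k^o p)\|_{\mathcal T_h}\le Ch^k\,t|p|_{k+1}$. The only term requiring a moment of care is $t^{-1}\|\bm R-\bm\Pi_{k-1}^o\bm R\|_{\mathcal T_h}$: here I would invoke $\bm R=\lambda^{-1}t^2\nabla^{\perp}p$, so that $\|\bm R\|_k\le Ct^2\|p\|_{k+1}$, and then \eqref{5} gives $t^{-1}\|\bm R-\bm\Pi_{k-1}^o\bm R\|_{\mathcal T_h}\le Ct^{-1}h^k\|\bm R\|_k\le Ch^k\,t\|p\|_{k+1}$, which is already one of the terms on the right--hand side of \Cref{mainlemma} (this is precisely why no $\|\bm R\|_k$ needs to be listed there). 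Adding the four split estimates and absorbing constants finishes the proof.

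There is essentially no genuine obstacle remaining at this stage: all the real work --- the discrete LBB condition, the HDG--Korn and HDG--Poincar\'e inequalities, and the careful accounting of the consistency and projection terms with their correct $t$--weights --- has already been carried out in \Cref{mainlemma}. The single thing worth stating explicitly is the bookkeeping of the factor $t$: one must pair $t^{-1}$ with the $O(t^2)$ quantity $\bm R$ and keep $t$ attached to $\nabla^{\perp}p$, so that the constant $C$ stays independent of $t$ and the estimate is genuinely locking--free.
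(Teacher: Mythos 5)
Your proposal is correct and follows exactly the paper's route: the paper obtains \Cref{theorem2} directly from \Cref{mainlemma} via the triangle inequality, with the projection errors handled by \eqref{5} and the $t^{-1}\|\bm R-\bm\Pi_{k-1}^o\bm R\|_{\mathcal T_h}$ term controlled through $\bm R=\lambda^{-1}t^2\nabla^{\perp}p$, just as you describe. Your write-up simply spells out the bookkeeping that the paper leaves implicit.
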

\subsection{Main Result}
The third problem is also a normal Possion problem, so it's ready to get the prior estimate by using the same argument as \eqref{discrete stage1}.
\begin{theorem}\label{theorem3}
	Let $(\bm G,\omega)$ be the solution of \eqref{stage3} and let $(\bm G_h,\omega_h,\widehat{\omega}_h)$ be numerical the solution of \eqref{discrete stage3}, then we have 
	\begin{equation}
		\begin{aligned}
			&\|\bm G-\bm G_h\|_{\mathcal{T}_h}+\|\nabla \omega-\nabla \omega_h\|_{\mathcal{T}_h}\\
			&\qquad\leq Ch^k(\|\bm f\|_{k-1}+\|g\|_{k-1}+\|r\|_{k+1}+\|\bm \sigma \|_k+\|\bm \theta\|_{k+1}+t\|p\|_{k+1}+\|\omega\|_{k+1}).
		\end{aligned}
	\end{equation}
\end{theorem}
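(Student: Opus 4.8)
The plan is to imitate the treatment of Step One (\Cref{lemma1} and \Cref{theorem1}): apart from the data $\langle\bm\theta_h\cdot\bm n,\widehat s_h\rangle_{\partial\mathcal T_h}-(\nabla\cdot\bm\theta_h,s_h)_{\mathcal T_h}$ on its right-hand side, \eqref{discrete stage3} is exactly an HDG discretization of the Poisson problem \eqref{stage3}, and that extra data is a consistent approximation of $(\bm\theta,\nabla s)$ modulo the error $\bm\theta-\bm\theta_h$ already controlled by \Cref{theorem2}/\Cref{mainlemma}. Set $\bm\xi_{\bm G}:=\bm\Pi_{k-1}^o\bm G-\bm G_h$, $\xi_\omega:=\Pi_k^o\omega-\omega_h$, $\xi_{\widehat\omega}:=\Pi_{k-1}^{\partial}\omega-\widehat\omega_h$. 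Repeating the computation of \Cref{lemma1} with the source $(g,\mu)$ of \eqref{stage1} replaced by the source $(\bm\theta,\nabla s)+\lambda^{-1}t^2(g,s)$ of \eqref{stage3}, and $(\bm L,r)\mapsto(\bm G,\omega)$, gives
\begin{align*}
&\mathfrak{a}_h(\bm\Pi_{k-1}^o\bm G,\Pi_k^o\omega,\Pi_{k-1}^{\partial}\omega;\bm H_h,s_h,\widehat s_h)
=(\bm\theta,\nabla\mathcal{I}_h(s_h,\widehat s_h))_{\mathcal T_h}+\lambda^{-1}t^2(g,\mathcal{I}_h(s_h,\widehat s_h))_{\mathcal T_h}\\
&\qquad+(\bm\Pi_{k-1}^o\bm G-\bm G,\nabla\mathcal{I}_h(s_h,\widehat s_h))_{\mathcal T_h}+\langle\alpha_1(\Pi_{k-1}^{\partial}\Pi_k^o\omega-\Pi_{k-1}^{\partial}\omega),\Pi_{k-1}^{\partial}s_h-\widehat s_h\rangle_{\partial\mathcal T_h}.
\end{align*}
Subtracting \eqref{discrete stage3} yields the error equation $\mathfrak{a}_h(\bm\xi_{\bm G},\xi_\omega,\xi_{\widehat\omega};\bm H_h,s_h,\widehat s_h)=\mathcal{B}+D_1+D_2+D_3$, where $\mathcal{B}:=(\bm\theta,\nabla\mathcal{I}_h(s_h,\widehat s_h))_{\mathcal T_h}-\langle\bm\theta_h\cdot\bm n,\widehat s_h\rangle_{\partial\mathcal T_h}+(\nabla\cdot\bm\theta_h,s_h)_{\mathcal T_h}$, and $D_1:=\lambda^{-1}t^2(g,\mathcal{I}_h(s_h,\widehat s_h)-s_h)_{\mathcal T_h}$, $D_2:=(\bm\Pi_{k-1}^o\bm G-\bm G,\nabla\mathcal{I}_h(s_h,\widehat s_h))_{\mathcal T_h}$, $D_3:=\langle\alpha_1(\Pi_{k-1}^{\partial}\Pi_k^o\omega-\Pi_{k-1}^{\partial}\omega),\Pi_{k-1}^{\partial}s_h-\widehat s_h\rangle_{\partial\mathcal T_h}$ are the exact counterparts of $E_1,E_2,E_3$ in the proof of \Cref{theorem1} (with $t^2\le1$ absorbed into $D_1$ and $\|\bm G\|_k=\|\nabla\omega\|_k\le C\|\omega\|_{k+1}$); hence each is bounded by $Ch^k(\|g\|_{k-1}+\|\omega\|_{k+1})\,\|(\bm H_h,s_h,\widehat s_h)\|_{\mathfrak{a}_h}$.

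The crux, and the step I expect to be the main obstacle, is the consistency of $\mathcal{B}$. Integrating $(\nabla\cdot\bm\theta_h,s_h)_{\mathcal T_h}$ by parts element-wise, inserting and removing $(\bm\theta_h,\nabla\mathcal{I}_h(s_h,\widehat s_h))_{\mathcal T_h}$, and integrating by parts once more, the volume remainder $(\nabla\cdot\bm\theta_h,\mathcal{I}_h(s_h,\widehat s_h)-s_h)_{\mathcal T_h}$ vanishes because $\nabla\cdot\bm\theta_h|_K\in\mathcal{P}^{k-1}(K)\subset\mathcal{P}^k(K)$ is $\mathrm{L}^2(K)$-orthogonal to $\mathcal{I}_h(s_h,\widehat s_h)-s_h$ by \eqref{pro3}, and the boundary remainder reduces to $\langle\bm\theta_h\cdot\bm n,\mathcal{I}_h(s_h,\widehat s_h)-\widehat s_h\rangle_{\partial\mathcal T_h}$, which vanishes because $\bm\theta_h\cdot\bm n|_F\in\mathcal{P}^k(F)$ is $\mathrm{L}^2(F)$-orthogonal to $\mathcal{I}_h(s_h,\widehat s_h)-\widehat s_h$ by \eqref{pro4}. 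What survives is just $\mathcal{B}=(\bm\theta-\bm\theta_h,\nabla\mathcal{I}_h(s_h,\widehat s_h))_{\mathcal T_h}$; by \eqref{pro2}, together with \eqref{3} (with $m=1$) to dominate $\|\mathfrak{h}^{-1/2}(s_h-\widehat s_h)\|_{\partial\mathcal T_h}$ by $\|\nabla s_h\|_{\mathcal T_h}+\|\alpha_1^{1/2}(\Pi_{k-1}^{\partial}s_h-\widehat s_h)\|_{\partial\mathcal T_h}$, one gets $|\mathcal{B}|\le C\|\bm\theta-\bm\theta_h\|_{\mathcal T_h}\,\|(\bm H_h,s_h,\widehat s_h)\|_{\mathfrak{a}_h}$.

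It remains to bound $\|\bm\theta-\bm\theta_h\|_{\mathcal T_h}$, which \Cref{theorem2} does not supply directly (it controls $\|\nabla\bm\theta-\nabla\bm\theta_h\|_{\mathcal T_h}$). Split $\bm\theta-\bm\theta_h=(\bm\theta-\bm\Pi_k^o\bm\theta)+\bm\xi_{\bm\theta}$; the first term is $O(h^{k+1}\|\bm\theta\|_{k+1})$ by \eqref{5}, and since $(\bm\xi_{\bm\theta},\bm\xi_{\widehat{\bm\theta}})\in\mathcal{Y}_h\times\widehat{\mathcal{Y}}_h$ (note $\bm\Pi_{\ell}^{\partial}\bm\theta=\bm0$ on $\partial\Omega$), the HDG-Poincar\'e inequality \eqref{HDGpoincare} gives $\|\bm\xi_{\bm\theta}\|_{\mathcal T_h}\le C(\|\nabla\bm\xi_{\bm\theta}\|_{\mathcal T_h}+\|\alpha_2^{1/2}(\bm\Pi_{\ell}^{\partial}\bm\xi_{\bm\theta}-\bm\xi_{\widehat{\bm\theta}})\|_{\partial\mathcal T_h})\le C\|(\bm\xi_{\bm\sigma},\bm\xi_{\bm R},\bm\xi_{\bm\theta},\bm\xi_{\widehat{\bm\theta}},\xi_p,\xi_{\widehat p})\|_{\mathfrak{b}_h}$, which by \Cref{mainlemma} is $\le Ch^k(\|\bm f\|_{k-1}+\|g\|_{k-1}+\|r\|_{k+1}+\|\bm\sigma\|_k+\|\bm\theta\|_{k+1}+t\|p\|_{k+1})$. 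Inserting all bounds into the error equation, applying the inf-sup estimate \eqref{discretelbb1} for $\mathfrak{a}_h$ (whose underlying spaces coincide with $\mathcal{G}_h\times\mathcal{W}_h\times\widehat{\mathcal{W}}_h$) to obtain $\|(\bm\xi_{\bm G},\xi_\omega,\xi_{\widehat\omega})\|_{\mathfrak{a}_h}\le Ch^k(\cdots)$, and finishing with the triangle inequality via $\bm G-\bm G_h=(\bm G-\bm\Pi_{k-1}^o\bm G)+\bm\xi_{\bm G}$ and $\nabla\omega-\nabla\omega_h=\nabla(\omega-\Pi_k^o\omega)+\nabla\xi_\omega$ together with \eqref{5}, yields the asserted estimate.
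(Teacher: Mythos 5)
Your proposal is correct and follows essentially the route the paper intends: the paper justifies \Cref{theorem3} only by remarking that Step Three is again a Poisson-type problem handled ``by the same argument'' as \eqref{discrete stage1}, and your argument is precisely that adaptation of \Cref{lemma1}/\Cref{theorem1}, with the extra data term correctly reduced to $(\bm\theta-\bm\theta_h,\nabla\mathcal{I}_h(s_h,\widehat s_h))_{\mathcal T_h}$ via \eqref{pro3}--\eqref{pro4} and then controlled through the HDG-Poincar\'e inequality \eqref{HDGpoincare} and \Cref{mainlemma}. You thus supply, correctly, the details (consistency of the $\bm\theta_h$-dependent right-hand side and the bound on $\|\bm\theta-\bm\theta_h\|_{\mathcal T_h}$) that the paper leaves implicit.
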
\par
For the shear stress $\bm\gamma$,  we directly have the following estimate
\begin{equation}\label{streeestimate}
	\begin{aligned}
		t\|\bm\gamma-\bm\gamma_h\|_{\mathcal{T}_h}&\leq t\|\bm L-\bm L_h\|_{\mathcal{T}_h}+\lambda t^{-1}\|\bm R-\bm R_h\|_{\mathcal{T}_h}\\
		&\leq  C h^k(\|\bm f\|_{k-1}+\|g\|_{k-1}+\|r\|_{k+1}+\|\bm \sigma \|_k+\|\bm \theta\|_{k+1}+t\|p\|_{k+1}).
	\end{aligned}
\end{equation}
Under the \Cref{assumption of regularity} and combining  \Cref{theorem1,theorem2,theorem3} with \eqref{streeestimate}, the main result is straightforward.
\begin{theorem}[Main Result]\label{mainresult}
	Let $(\bm L,r,\bm\sigma,\bm R,\bm\theta,p,\bm G,\omega,\bm\gamma)$ be the solution of \eqref{continuousfor}. Let $(\bm L_h,r_h,\widehat{r}_h,\bm\sigma_h,\bm R_h,\bm\theta_h,\widehat{\bm \theta}_h,p_h,\widehat{p}_h,\bm G_h,\omega_h,\widehat{\omega}_h,\bm\gamma_h)$ be the numerical solution of \eqref{discretfor}, then we have 
	\begin{equation}
		\begin{aligned}
			&\|\bm L-\bm L_h\|_{\mathcal{T}_h}+\|\nabla r-\nabla r_h\|_{\mathcal{T}_h}+\|\bm \sigma-\bm \sigma_h\|_{\mathcal{T}_h}+t^{-1}\|\bm R-\bm R_h\|_{\mathcal{T}_h}+\|\nabla\bm\theta-\nabla\bm\theta_h\|_{\mathcal{T}_h}\\
			&\qquad+t\|\nabla^{\perp}p-\nabla^{\perp}p_h\|_{\mathcal{T}_h}+\|\bm G-\bm G_h\|_{\mathcal{T}_h}+\|\nabla \omega-\nabla \omega_h\|_{\mathcal{T}_h}+t\|\bm\gamma-\bm\gamma_h\|_{\mathcal{T}_h}\\
			&\qquad\leq C h^k(\|f\|_{k-1}+\|g\|_{k-1}).
		\end{aligned}
	\end{equation}
\end{theorem}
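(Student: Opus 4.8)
The plan is to obtain the Main Result as an immediate corollary of the stage-by-stage estimates already established, namely \Cref{theorem1,theorem2,theorem3} and the shear-stress bound \eqref{streeestimate}, combined with the elliptic regularity of \Cref{assumption of regularity}. No new analysis is required: all the substantive work (the discrete LBB/inf-sup conditions, the HDG--Korn and HDG--Poincar\'e inequalities, and the projection-based error equations) has been carried out in the preceding lemmas, so this step is essentially bookkeeping.

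First I would add the left-hand sides and right-hand sides of \Cref{theorem1}, \Cref{theorem2}, \Cref{theorem3} and \eqref{streeestimate}. Each right-hand side is $Ch^k$ times one and the same fixed combination of the data norms $\|\bm f\|_{k-1}$, $\|g\|_{k-1}$ and the solution seminorms $\|r\|_{k+1}$, $\|\bm\sigma\|_{k}$, $\|\bm\theta\|_{k+1}$, $t\|p\|_{k+1}$, $\|\omega\|_{k+1}$, so the sum is again of this form. It is worth noting that the way the discretization error cascades through the three stages is already built into the statements being summed: Step Two is driven by the computed $\bm L_h$ and Step Three by the computed $\bm\theta_h$, and these dependences are reflected in the right-hand sides of \Cref{theorem2} and \Cref{theorem3} (which already contain the cumulative seminorms), so there is no further coupling argument to make here.

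Second I would invoke \Cref{assumption of regularity}, under which the entire solution seminorm combination is bounded by $C(\|\bm f\|_{k-1}+\|g\|_{k-1})$, and substitute this into the assembled estimate to reach the claimed bound $Ch^k(\|\bm f\|_{k-1}+\|g\|_{k-1})$. The one place deserving a moment's care is the treatment of the $t$-weights: the regularity estimate supplies precisely the weighted term $t\|p\|_{k+1}$, which matches the weight on $t\|\nabla^{\perp}p-\nabla^{\perp}p_h\|_{\mathcal T_h}$ and, through $\bm R=\lambda^{-1}t^2\nabla^{\perp}p$, also controls $t^{-1}\|\bm R-\bm R_h\|_{\mathcal T_h}$ and the weighted shear-stress term in \eqref{streeestimate}. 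This weight matching is exactly what makes the constant $t$-independent.

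The "hard part," to the extent there is one, is therefore not an estimate but a consistency check: one must verify that every constant $C$ appearing along the chain \Cref{theorem1} $\to$ \Cref{theorem2} $\to$ \Cref{theorem3} $\to$ \eqref{streeestimate}, as well as the constant in \Cref{assumption of regularity}, has been kept independent of both $t$ and $h$ (which is the case throughout the paper, by the standing convention on $C$). Granting this, the assembled bound inherits the locking-free property and the optimal $O(h^k)$ rate uniformly as $t\to 0$, which completes the proof.
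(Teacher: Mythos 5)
Your proposal is correct and coincides with the paper's argument: the paper also obtains \Cref{mainresult} directly by summing \Cref{theorem1,theorem2,theorem3} and \eqref{streeestimate} and then invoking \Cref{assumption of regularity} to bound the accumulated solution norms by $C(\|\bm f\|_{k-1}+\|g\|_{k-1})$. Your additional remarks on the cascading of $\bm L_h$ and $\bm\theta_h$ through the stages and on the $t$-weight matching are consistent with, and merely make explicit, what the paper treats as straightforward.
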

\section{${\rm L}_2$ Error Estimates}
We introduce the following co-problems to help us establish the ${\rm L}_2$ error estimates, here $ \bm e_{\bm L}=\bm L-\bm L_h, \bm e_{\bm \sigma}=\bm \sigma-\bm \sigma_h,\bm e_{\bm R}=\bm R-\bm R_h$.
\begin{subequations}\label{repro}
	\begin{align}
		(\bm\aleph,\bm M)-(\nabla \iota,\bm M)&=(\bm e_{\bm L},\bm M),&\forall \bm M \in \left[\mathrm{L}^2(\Omega)\right]^2, \\
		(\bm \aleph, \nabla \mu) &=(-\xi_r, \mu), &\forall \mu \in H_0^1(\Omega), \\
		\left(\mathcal{C}^{-1} \bm\zeta, \bm\tau\right)+(\nabla \bm\psi, \bm\tau) &=0, &\forall \bm \tau \in\left[\mathrm{L}^2(\Omega)\right]^{2 \times 2} \cap \mathbb{S}, \\
		(\lambda t^{-2} \bm \Upsilon,\bm S)
		+(\nabla^{\perp}\rho,\bm S)&=0,&\forall \bm S \in \left[\mathrm{L}^2(\Omega)\right]^2, \\
		(\bm\zeta, \nabla \phi)-(\bm\phi, \nabla^{\perp} \rho) &=(-\bm\xi_{\bm \theta}, \bm\phi), &\forall \bm\phi \in\left[H_0^1(\Omega)\right]^2, \\
		-(\bm\psi, \nabla^{\perp} q)-(\bm \Upsilon, \nabla^{\perp} q) &=0, &\forall q \in \widehat{H}^1(\Omega),
		\\
		(\bm\gimel,\bm H)+( \nabla \varkappa,\bm H)&=0,&\forall \bm H \in \left[\mathrm{L}^2(\Omega)\right]^2, \\
		(\bm \gimel, \nabla s) &=(-\xi_\omega,s), &\forall s \in H_0^1(\Omega).
	\end{align}
\end{subequations}\par
Wehn $\Omega$ is  a convex polygon, by \Cref{regularity result} we have
\begin{align}
	\|\bm \aleph\|_1+\|\iota\|_2&\leq C (\|\xi_{r}\|_0+\|\bm e_{\bm L}\|_0),\label{co-regularity1}\\
	\|\bm \zeta\|_1+t^{-1}\|\Upsilon\|_1+\|\bm\psi\|_2+\|\rho\|_1+t\|\rho\|_2&\leq C \|\bm\xi_{\bm\theta}\|_0,\label{co-regularity2}\\
	\|\bm \gimel\|_1+\|\varkappa\|_2&\leq C \|\xi_{\omega}\|_0.\label{co-regularity3}
\end{align}\par
\begin{theorem}\label{L2mainresult} When $\Omega$ is convex and 
	under the condition of \Cref{mainlemma}, it holds
	\begin{align}
		\|r-r_h\|_{\mathcal{T}_h}&\leq Ch^{k+1}(\|r\|_{k+1}+\|g\|_{k-1}) ,\label{l21}\\
		\|\bm\theta-\bm\theta_h\|_{\mathcal{T}_h}&\leq Ch^{k+1} (\|g\|_{k-1}+\|\bm f\|_{k-1}+\|r\|_{k+1}+\|\bm \sigma \|_k+\|\bm\theta\|_{k+1}+\|p\|_k+t\|p\|_{k+1}),\label{l22}\\
		\|\omega-\omega_h\|_{\mathcal{T}_h}& \leq Ch^{k+1}(\|g\|_{k-1}+\|\bm f\|_{k-1}+\|r\|_{k+1}+\|\bm \sigma \|_k\\
		&\qquad\qquad\qquad+\|\bm\theta\|_{k+1}+\|p\|_k+t\|p\|_{k+1}+\|\omega\|_{k+1}).\label{l23}\nonumber
	\end{align}
	Moreover, under the \Cref{assumption of regularity}, we have
	\begin{align}
		\|r-r_h\|_{\mathcal{T}_h}
		+\|\bm\theta-\bm\theta_h\|_{\mathcal{T}_h}
		+\|\omega-\omega_h\|_{\mathcal{T}_h}\le C h^{k+1}(\|f\|_{k-1}+\|g\|_{k-1}).
	\end{align}
\end{theorem}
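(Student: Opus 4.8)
The plan is the standard Aubin--Nitsche duality argument, carried out on the three (decoupled) co-problems collected in \eqref{repro}. Since $\Omega$ is convex, \Cref{regularity result} applies to each sub-problem of \eqref{repro} and gives the a priori regularity bounds \eqref{co-regularity1}--\eqref{co-regularity3}, whose right-hand sides are precisely the $\mathrm{L}^2$-norms of the primal error quantities $\bm e_{\bm L},\xi_r$, $\bm\xi_{\bm\theta}$ and $\xi_\omega$. I will prove \eqref{l21}, then \eqref{l22}, then \eqref{l23} in that order, because the Step Two error equation \eqref{errorprojection} carries a source term in $\bm L-\bm L_h$ and the Step Three data carries a term in $\bm\theta-\bm\theta_h$, so each estimate feeds the next, and at the end I invoke \Cref{assumption of regularity} to collapse the three bounds.

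For \eqref{l21}: test the first co-problem \eqref{repro}(a)--(b) with the Step One discrete errors, so that $\|\xi_r\|_0^2$ (together with a harmless $\|\bm e_{\bm L}\|_0$ contribution) is written as a variational expression in $(\bm\aleph,\iota)$; then, after replacing $\iota$ by the conforming lift $\mathcal{I}_h$ of $(\xi_r,\widehat\xi_r)$, use the error representation \eqref{errorprojection1} behind \Cref{lemma1} to rewrite everything through $\mathfrak{a}_h(\bm\xi_{\bm L},\xi_r,\widehat\xi_r;\cdot)$. Each resulting term is a product of a projection-/trace-approximation error of the co-solution — bounded by $Ch(\|\bm\aleph\|_1+\|\iota\|_2)\le Ch(\|\xi_r\|_0+\|\bm e_{\bm L}\|_0)$ via \eqref{3}, \eqref{5} and \eqref{pro} — times a Step One primal-error factor bounded by $\|(\bm\xi_{\bm L},\xi_r,\widehat\xi_r)\|_{\mathfrak{a}_h}\le Ch^k(\|r\|_{k+1}+\|g\|_{k-1})$ from \Cref{theorem1}. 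Dividing out the common $\mathrm{L}^2$-error factor yields \eqref{l21}; the same computation upgrades $\|\bm L-\bm L_h\|_{\mathcal T_h}$ from $O(h^k)$ to $O(h^{k+1})$, which is needed below.

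For \eqref{l22}: run the same machinery on the perturbed-saddle-point co-problem \eqref{repro}(c)--(f). Test its variational equations with the Step Two discrete errors $(\bm\xi_{\bm\sigma},\bm\xi_{\bm R},\bm\xi_{\bm\theta},\bm\xi_{\widehat{\bm\theta}},\xi_p,\xi_{\widehat p})$, then use the Step Two error equation from the proof of \Cref{mainlemma} (which rests on \eqref{errorprojection}) to express the outcome as $\mathfrak{b}_h$ evaluated at those errors against suitable conforming lifts of $(\bm\psi,\rho)$ built with $\bm{\mathcal{I}}_h$ and $\mathcal{I}_h$. Every term is again a (co-solution approximation error)$\times$(primal error) pair: the approximation factors are controlled by $Ch\big(\|\bm\zeta\|_1+\|\bm\psi\|_2+t^{-1}\|\bm\Upsilon\|_1+\|\rho\|_1+t\|\rho\|_2\big)\le Ch\|\bm\xi_{\bm\theta}\|_0$ through \eqref{co-regularity2}, \eqref{3}, \eqref{5}, \eqref{pro}; the primal factors by $\|(\bm\xi_{\bm\sigma},\dots,\xi_{\widehat p})\|_{\mathfrak{b}_h}=O\big(h^k(\cdots)\big)$ from \Cref{theorem2}, except for the cross term in $\bm L-\bm L_h$, which is absorbed using the $O(h^{k+1})$ bound just obtained (equivalently, by $\mathrm{L}^2$-orthogonality of $\bm\Pi_{k-1}^o$, so that $\bm L-\bm L_h$ pairs only against an $O(h\|\bm\xi_{\bm\theta}\|_0)$ approximation error of the co-solution). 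This is the delicate step and the main obstacle is twofold: one must keep the $t$-weighted pieces ($t^{-1}\|\bm\Upsilon\|_1$, $t\|\rho\|_2$, and $t^2\|\nabla^\perp p_h\|_{\mathcal T_h}$) balanced so that no negative power of $t$ survives — this is exactly what makes the estimate locking-free — and one must correctly dispose of the boundary terms involving the tangential trace $\bm t$, which is where the extra $\|p\|_k$ in \eqref{l22} appears.

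For \eqref{l23}: the last co-problem \eqref{repro}(g)--(h) is an ordinary dual Poisson problem, so the argument is identical to that for \eqref{l21}, now using the Step Three error equation; the one new feature is the data $\langle\bm\theta_h\cdot\bm n,\widehat s_h\rangle_{\partial\mathcal T_h}-(\nabla\cdot\bm\theta_h,s_h)_{\mathcal T_h}$, which after integration by parts and the $\mathrm{L}^2$-estimate \eqref{l22} for $\bm\theta-\bm\theta_h$ contributes at order $h^{k+1}$ with the norms displayed in \eqref{l23}, while the remaining terms are bounded by $\|(\bm G-\bm G_h,\cdots)\|_{\mathfrak a_h}=O(h^k(\cdots))$ from \Cref{theorem3} times an $O(h)$ approximation error of the co-solution via \eqref{co-regularity3}. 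Finally, under \Cref{assumption of regularity} one bounds $\|r\|_{k+1}+\|\bm\sigma\|_k+\|\bm\theta\|_{k+1}+\|p\|_k+t\|p\|_{k+1}+\|\omega\|_{k+1}$ by $C(\|f\|_{k-1}+\|g\|_{k-1})$, which collapses \eqref{l21}--\eqref{l23} into the last displayed inequality and completes the proof.
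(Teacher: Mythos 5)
Your proposal is correct and follows essentially the same route as the paper: an Aubin--Nitsche duality argument on the co-problems \eqref{repro} with the convexity-based regularity bounds \eqref{co-regularity1}--\eqref{co-regularity3}, conforming lifts via $\mathcal I_h$/$\bm{\mathcal I}_h$, the energy estimates of \Cref{theorem1,theorem2,theorem3} for the primal-error factors, and the same chaining (the upgraded $O(h^{k+1})$ bound for $\bm L-\bm L_h$ feeding the Step Two estimate, and \eqref{l22} feeding Step Three), before invoking \Cref{assumption of regularity} for the final bound. The only differences are presentational (e.g.\ routing the Step One identity through \eqref{errorprojection1} rather than expanding the continuous and discrete forms directly), not substantive.
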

\begin{proof}
	We only give a proof of \eqref{l21} and \eqref{l22}, because the proof of \eqref{l23} is similar with \eqref{l21}.\par
	\textbf{Proof of \eqref{l21}:} Using the relation of \eqref{repro} and the orthogonality of projections ${\rm L}^2$, we get
	\begin{align*}
		\|\xi_r\|^2_{\mathcal{T}_h}+\|\bm e_{\bm L}\|^2_{\mathcal{T}_h}&=(\bm e_{\bm L},\bm e_{\bm L})_{\mathcal{T}_h}+(\xi_r-\mathcal{I}_h(\xi_r,\xi_{\widehat{r}}),\xi_r)_{\mathcal{T}_h}+(\mathcal{I}_h(\xi_r,\xi_{\widehat{r}}),\xi_r)_{\mathcal{T}_h}\\
		&=(\xi_r-\mathcal{I}_h(\xi_r,\xi_{\widehat{r}}),\xi_r)_{\mathcal{T}_h}+(\bm\aleph,\bm e_{\bm L})-(\nabla \iota,\bm e_{\bm L})-(\bm \aleph, \nabla \mathcal{I}_h(\xi_r,\xi_{\widehat{r}}))\\
		&=(\xi_r-\mathcal{I}_h(\xi_r,\xi_{\widehat{r}}),\xi_r)_{\mathcal{T}_h}+(\bm\aleph-\bm\Pi^o_{0}\bm\aleph,\bm e_{\bm L})_{\mathcal{T}_h}+(\bm\Pi^o_{0}\bm\aleph,\bm e_{\bm L})_{\mathcal{T}_h}\\
		&\qquad-(\nabla \iota,\bm e_{\bm L})_{\mathcal{T}_h}-(\bm \aleph-\bm\Pi^o_{0}\bm\aleph, \nabla \mathcal{I}_h(\xi_r,\xi_{\widehat{r}}))_{\mathcal{T}_h}-(\bm\Pi^o_{0}\bm\aleph, \nabla \mathcal{I}_h(\xi_r,\xi_{\widehat{r}}))_{\mathcal{T}_h},
	\end{align*}
	then, we mainly consider the following three terms 
	\begin{align*}
		&(\bm\Pi^o_{0}\bm\aleph,\bm e_{\bm L})_{\mathcal{T}_h}-(\nabla\iota,\bm e_{\bm L})_{\mathcal{T}_h}-(\bm\Pi^o_{0}\bm\aleph, \nabla \mathcal{I}_h(\xi_r,\xi_{\widehat{r}}))_{\mathcal{T}_h}\\
		&\qquad(\text{by $\nabla\cdot\bm\Pi^o_{0}=0$ and \eqref{pro3}, \eqref{pro4} and integration by parts})\\
		&\qquad=(\bm\Pi^o_{0}\bm\aleph,\bm e_{\bm L})_{\mathcal{T}_h}+(\iota,\nabla\cdot\bm e_{\bm L})_{\mathcal{T}_h}-\langle\bm e_{\bm L}\cdot \bm n,\iota\rangle_{\partial{\mathcal{T}_h}}-\langle\bm\Pi^o_{0}\bm\aleph\cdot t,  \xi_{\widehat{r}}\rangle_{\partial{\mathcal{T}_h}},\\
		&\qquad=(\bm\Pi^o_{0}\bm\aleph,\bm L-\bm L_h)_{\mathcal{T}_h}+(\iota,\nabla\cdot(\bm L-\bm L_h))_{\mathcal{T}_h}\\
		&\qquad\qquad-\langle\bm L-\bm L_h\cdot \bm n,\iota\rangle_{\partial{\mathcal{T}_h}}-\langle\bm\Pi^o_{0}\bm\aleph\cdot t,  \Pi_{k-1}^{\partial} r-\widehat{r}_h\rangle_{\partial{\mathcal{T}_h}}\\
		&\qquad(\text{by the orthogonality of projections ${\rm L}^2$ and  integration by parts})\\
		&\qquad=(\bm\Pi^o_{0}\bm\aleph,\bm L)_{\mathcal{T}_h}-(\bm\Pi^o_{0}\bm\aleph,\bm L_h)_{\mathcal{T}_h}-(\nabla\iota,\bm L)_{\mathcal{T}_h}\\
		&\qquad\qquad-(\iota-\Pi^o_k\iota,\nabla\cdot\bm L_h)_{\mathcal{T}_h}-(\Pi^o_k\iota,\nabla\cdot\bm L_h)_{\mathcal{T}_h}+\langle\bm L_h\cdot \bm n,\iota-\Pi^\partial_{k-1}\iota\rangle_{\partial{\mathcal{T}_h}}\\
		&\qquad\qquad+\langle\bm L_h\cdot \bm n,\Pi^\partial_{k-1}\iota\rangle_{\partial{\mathcal{T}_h}}-(\bm\Pi^o_{0}\bm\aleph,\nabla r)_{\mathcal{T}_h}+\langle\bm\Pi^o_{0}\bm\aleph\cdot t,\widehat{r}_h\rangle_{\partial{\mathcal{T}_h}}\\
		&\qquad(\text{by the continuous form \eqref{continuous form} and the discrete form \eqref{discrete formulation}})\\
		&\qquad=(g,\iota-\Pi^o_k\iota)-\langle\bm L_h\cdot \bm n,\iota-\Pi^\partial_{k-1}\iota\rangle_{\partial{\mathcal{T}_h}}-(\iota-\Pi^o_k\iota,\nabla\cdot\bm L_h)_{\mathcal{T}_h}\\
		&\qquad\qquad-\langle\alpha_1(\Pi_{k-1}^{\partial}r_h-\widehat r_h),\Pi_{k-1}^{\partial}\Pi^o_k\iota-\Pi^\partial_{k-1}\iota\rangle_{\partial\mathcal T_h}.
	\end{align*}
	So the above equation leads to
	\begin{align*}
		\|\xi_r\|^2_{\mathcal{T}_h}+\|\bm e_{\bm L}\|^2_{\mathcal{T}_h}
		&=(\xi_r-\mathcal{I}_h(\xi_r,\xi_{\widehat{r}}),\xi_r)_{\mathcal{T}_h}+(\bm\aleph-\bm\Pi^o_{0}\bm\aleph,\bm e_{\bm L})_{\mathcal{T}_h}-(\bm \aleph-\bm\Pi^o_{0}\bm\aleph, \nabla \mathcal{I}_h(\xi_r,\xi_{\widehat{r}}))_{\mathcal{T}_h}\\
		&\qquad+(g-\Pi^o_{k-1}g,\iota-\Pi^o_k\iota)-\langle\bm L_h\cdot \bm n,\iota-\Pi^\partial_{k-1}\iota\rangle_{\partial{\mathcal{T}_h}}-(\iota-\Pi^o_k\iota,\nabla\cdot\bm L_h)_{\mathcal{T}_h}\\
		&\qquad+\langle\alpha_1(\Pi_{k-1}^{\partial}\xi_r-\xi_{\widehat{r}}),\Pi_{k-1}^{\partial}\Pi^o_k\iota-\Pi^\partial_{k-1}\iota\rangle_{\partial\mathcal T_h}\\
		&\qquad-\langle\alpha_1(\Pi_{k}^o r-r),\Pi_{k-1}^{\partial}\Pi^o_k\iota-\Pi^\partial_{k-1}\iota\rangle_{\partial\mathcal T_h}.
	\end{align*}
	We denote 
	\begin{align*}
		\mathbb{E}_1&:=|(\xi_r-\mathcal{I}_h(\xi_r,\xi_{\widehat{r}}),\xi_r)_{\mathcal{T}_h}-(\bm \aleph-\bm\Pi^o_{0}\bm\aleph, \nabla \mathcal{I}_h(\xi_r,\xi_{\widehat{r}}))_{\mathcal{T}_h}|\\
		\mathbb{E}_2&:=|(\bm\aleph-\bm\Pi^o_{0}\bm\aleph,\bm e_{\bm L})_{\mathcal{T}_h}+(g-\Pi^o_{k-1}g,\iota-\Pi^o_k\iota)\\
		&\qquad-\langle\bm L_h\cdot \bm n,\iota-\Pi^\partial_{k-1}\iota\rangle_{\partial{\mathcal{T}_h}}-(\iota-\Pi^o_k\iota,\nabla\cdot\bm L_h)_{\mathcal{T}_h}|\\
		\mathbb{E}_3&:=|\langle\alpha_1(\Pi_{k-1}^{\partial}\xi_r-\xi_{\widehat{r}}),\Pi_{k-1}^{\partial}\Pi^o_k\iota-\Pi^\partial_{k-1}\iota\rangle_{\partial\mathcal T_h}\\
		&\qquad-\langle\alpha_1(\Pi_{k}^o r-r),\Pi_{k-1}^{\partial}\Pi^o_k\iota-\Pi^\partial_{k-1}\iota\rangle_{\partial\mathcal T_h}|.
	\end{align*}\par
	By Cauchy-Schwartz's inequality, \eqref{5}, \eqref{pro1}, \eqref{pro2}, \eqref{co-regularity1} and \Cref{theorem1}, it holds
	\begin{align*}
		\mathbb{E}_1&\leq C h(\|\xi_r\|_{\mathcal{T}_h}+\|\bm\aleph\|_1)(\|\nabla \xi_r\|_{\mathcal{T}_h}+\|\mathfrak{h}^{-1/2}(\xi_r-\xi_{\widehat{r}})\|_{\partial\mathcal{T}_h})\\
		&\leq C h^{k+1}(\|\xi_r\|_{\mathcal{T}_h}+\|\bm e_{\bm L}\|_{\mathcal{T}_h})(\|r\|_{k+1}+\|g\|_{k-1}), \qquad(\text{by \eqref{co-regularity1} and \Cref{theorem1}}).
	\end{align*}
	By Cauchy-Schwartz's inequality and the orthogonality of projections ${\rm L}^2$ and \eqref{5} and \eqref{6}, it holds 
	\begin{align*}
		\mathbb{E}_2&\leq C h\|\bm\aleph\|_1\|\bm e_{\bm L}\|_{\mathcal{T}_h}
		+C h^{k+1}\|g\|_{k-1}\|\iota\|_2+Ch\|\iota\|_2\|\bm\xi_{\bm L}\|_{\mathcal{T}_h}\\
		&\leq ch^{k+1}(\|\xi_r\|_{\mathcal{T}_h}+\|\bm e_{\bm L}\|_{\mathcal{T}_h})(\|r\|_{k+1}+\|g\|_{k-1}), \qquad(\text{also by \eqref{co-regularity1} and \Cref{theorem1}}).
	\end{align*}
	Using the same argument as $\mathbb{E}_2$, we also get
	\begin{align*}
		\mathbb{E}_3&\leq C h\|\iota\|_2(h^k\|r\|_{k+1}+\|\alpha_1^{1/2}(\Pi_{k-1}^{\partial}\xi_r-\xi_{\widehat{r}})\|_{\partial\mathcal T_h})\\
		&\leq ch^{k+1}(\|\xi_r\|_{\mathcal{T}_h}+\|\bm e_{\bm L}\|_{\mathcal{T}_h})(\|r\|_{k+1}+\|g\|_{k-1}).
	\end{align*}
	So we can prove the desired conclusion
	\begin{align*}
		\|\xi_r\|_{\mathcal{T}_h}+\|\bm e_{\bm L}\|_{\mathcal{T}_h}\leq ch^{k+1}(\|r\|_{k+1}+\|g\|_{k-1})
	\end{align*}\par
	\textbf{Proof of \eqref{l22}:} We give the proof of \eqref{l22} which is nearly same as \eqref{l21}, but there are some details should be clarified. By \eqref{repro}, it holds
	\begin{align*}
		\|\xi_{\bm \theta}\|^2_{\mathcal{T}_h}&=(\xi_{\bm \theta}-\bm{\mathcal{I}}_h(\xi_{\bm\theta},\xi_{\widehat{\bm \theta}}),\xi_{\bm \theta})_{\mathcal{T}_h}+(\bm{\mathcal{I}}_h(\xi_{\bm\theta},\xi_{\widehat{\bm \theta}}),\xi_{\bm \theta})_{\mathcal{T}_h}\\
		&=(\xi_{\bm \theta}-\bm{\mathcal{I}}_h(\xi_{\bm\theta},\xi_{\widehat{\bm \theta}}),\xi_{\bm \theta})_{\mathcal{T}_h}+\left(\mathcal{C}^{-1} \bm\zeta, \bm e_{\bm\sigma}\right)_{\mathcal{T}_h}+(\nabla \bm\psi, \bm e_{\bm\sigma})+(\lambda t^{-2} \bm \Upsilon,\bm e_{\bm R})_{\mathcal{T}_h}\\
		&\qquad+(\nabla^{\perp}\rho,\bm e_{\bm R})_{\mathcal{T}_h}-(\bm\zeta, \nabla \bm{\mathcal{I}}_h(\xi_{\bm\theta},\xi_{\widehat{\bm \theta}}))_{\mathcal{T}_h}+(\bm{\mathcal{I}}_h(\xi_{\bm\theta},\xi_{\widehat{\bm \theta}}), \nabla^{\perp} \rho)_{\mathcal{T}_h}\\
		&\qquad-(\bm\psi, \nabla^{\perp} \mathcal{I}_h(\xi_p,\xi_{\widehat{p}}))_{\mathcal{T}_h}-(\bm \Upsilon, \nabla^{\perp} \mathcal{I}_h(\xi_p,\xi_{\widehat{p}}))_{\mathcal{T}_h} \\
		&(\text{by the orthogonality of projections ${\rm L}^2$ and  integration by parts})\\
		&=(\xi_{\bm \theta}-\bm{\mathcal{I}}_h(\xi_{\bm\theta},\xi_{\widehat{\bm \theta}}),\xi_{\bm \theta})_{\mathcal{T}_h}+\left(\mathcal{C}^{-1} \bm\zeta-\mathcal{C}^{-1}\bm\Pi^o_{0}\bm\zeta, \bm e_{\bm\sigma}\right)_{\mathcal{T}_h}+(\mathcal{C}^{-1}\Pi^o_{0}\bm\zeta, \bm e_{\bm\sigma})_{\mathcal{T}_h}\\
		&\qquad+(\nabla \bm\psi, \bm e_{\bm\sigma})_{\mathcal{T}_h}+(\lambda t^{-2}( \bm \Upsilon-\bm\Pi^o_0\bm \Upsilon),\bm e_{\bm R})_{\mathcal{T}_h}+(\lambda t^{-2} \bm\Pi^o_0\bm \Upsilon,\bm e_{\bm R})_{\mathcal{T}_h}\\
		&\qquad+(\nabla^{\perp}\rho,\bm e_{\bm R})_{\mathcal{T}_h}-(\bm\zeta-\bm\Pi^o_{0}\bm\zeta, \nabla \bm{\mathcal{I}}_h(\xi_{\bm\theta},\xi_{\widehat{\bm \theta}}))_{\mathcal{T}_h}-(\bm\Pi^o_{0}\bm\zeta, \nabla \bm{\mathcal{I}}_h(\xi_{\bm\theta},\xi_{\widehat{\bm \theta}}))_{\mathcal{T}_h}\\
		&\qquad+(\bm{\mathcal{I}}_h(\xi_{\bm\theta},\xi_{\widehat{\bm \theta}}), \nabla^{\perp} \rho)_{\mathcal{T}_h}-(\bm\psi, \nabla^{\perp} \mathcal{I}_h(\xi_p,\xi_{\widehat{p}}))_{\mathcal{T}_h}\\
		&\qquad-(\bm \Upsilon-\bm{\Pi}^o_{0}\bm\Upsilon, \nabla^{\perp} \mathcal{I}_h(\xi_p,\xi_{\widehat{p}}))_{\mathcal{T}_h}-(\bm{\Pi}^o_{0}\bm\Upsilon, \nabla^{\perp} \mathcal{I}_h(\xi_p,\xi_{\widehat{p}}))_{\mathcal{T}_h}.
	\end{align*}\par
	To deduce the desired estimate, we focus on the following terms
	\begin{align*}
		&(\mathcal{C}^{-1}\bm\Pi^o_{0}\bm\zeta, \bm e_{\bm\sigma})_{\mathcal{T}_h}+(\nabla \bm\psi, \bm e_{\bm\sigma})+(\lambda t^{-2} \bm\Pi^o_0\bm \Upsilon,\bm e_{\bm R})_{\mathcal{T}_h}+(\nabla^{\perp}\rho,\bm e_{\bm R})_{\mathcal{T}_h}-(\bm\Pi^o_{0}\bm\zeta, \nabla \bm{\mathcal{I}}_h(\xi_{\bm\theta},\xi_{\widehat{\bm \theta}}))_{\mathcal{T}_h}\\
		&\qquad+(\bm{\mathcal{I}}_h(\xi_{\bm\theta},\xi_{\widehat{\bm \theta}}), \nabla^{\perp} \rho)_{\mathcal{T}_h}-(\bm\psi, \nabla^{\perp} \mathcal{I}_h(\xi_p,\xi_{\widehat{p}}))_{\mathcal{T}_h}-(\bm{\Pi}^o_{0}\bm\Upsilon, \nabla^{\perp} \mathcal{I}_h(\xi_p,\xi_{\widehat{p}}))_{\mathcal{T}_h}\\
		&\qquad(\text{by $\nabla\cdot\bm\Pi^o_{0}=0$ and \eqref{pro3}, \eqref{pro4} and integration by parts})\\
		&\qquad =(\mathcal{C}^{-1}\bm\Pi^o_{0}\bm\zeta, \bm\sigma)_{\mathcal{T}_h}-(\mathcal{C}^{-1}\bm\Pi^o_{0}\bm\zeta, \bm\sigma_h)_{\mathcal{T}_h}+(\nabla \bm\psi,  \bm\sigma)_{\mathcal{T}_h}-(\nabla \bm\psi,\bm\sigma_h)_{\mathcal{T}_h}\\
		&\qquad\qquad+(\lambda t^{-2} \bm\Pi^o_0\bm \Upsilon,\bm R)_{\mathcal{T}_h}-(\lambda t^{-2} \bm\Pi^o_0\bm \Upsilon,\bm R_h)_{\mathcal{T}_h}+(\nabla^{\perp}\rho,\bm R)_{\mathcal{T}_h}-(\nabla^{\perp}\rho,\bm R_h)_{\mathcal{T}_h}\\
		&\qquad\qquad-\langle\bm\Pi^o_{0}\bm\zeta n, \bm\theta\rangle_{\partial\mathcal{T}_h}+\langle\bm\Pi^o_{0}\bm\zeta n, \widehat{\bm\theta}_h\rangle_{\partial\mathcal{T}_h}+(\bm{\mathcal{I}}_h(\xi_{\bm\theta},\xi_{\widehat{\bm \theta}}), \nabla^{\perp} \rho-\nabla^{\perp} \Pi^o_{k}\rho)_{\mathcal{T}_h}\\
		&\qquad\qquad+(\bm\theta, \nabla^{\perp} \Pi^o_{k}\rho)_{\mathcal{T}_h}-(\bm\theta_h, \nabla^{\perp} \Pi^o_{k}\rho)_{\mathcal{T}_h}-(\bm\psi-\bm\Pi^o_{k}\bm\psi, \nabla^{\perp} \mathcal{I}_h(\xi_p,\xi_{\widehat{p}}))_{\mathcal{T}_h}\\
		&\qquad\qquad-(\bm\Pi^o_{k}\bm\psi, \nabla^{\perp}p)_{\mathcal{T}_h}-(\nabla\times\bm\Pi^o_{k}\bm\psi, p_h)_{\mathcal{T}_h}-\langle\bm\Pi^o_{k}\bm\psi\cdot t,\Pi^\partial_{k-1}p-p\rangle_{\partial{\mathcal{T}_h}}\\
		&\qquad\qquad+\langle\bm\Pi^o_{k}\bm\psi\cdot t,\widehat{p}_h\rangle_{\partial{\mathcal{T}_h}}-\langle\bm{\Pi}^o_{0}\bm\Upsilon\cdot t,p\rangle_{\partial\mathcal{T}_h}+\langle\bm{\Pi}^o_{0}\bm\Upsilon\cdot t,\widehat{p}_h\rangle_{\partial\mathcal{T}_h}\\
		&\qquad(\text{by the orthogonality of projections ${\rm L}^2$ and  integration by parts})\\
		&\qquad=(\mathcal{C}^{-1}\bm\sigma, \Pi^o_{0}\bm\zeta)_{\mathcal{T}_h}+(\bm\sigma,\nabla \bm\psi  )_{\mathcal{T}_h}+(\lambda t^{-2}\bm R,\bm\Pi^o_0\bm \Upsilon)_{\mathcal{T}_h}+(\bm R,\nabla^{\perp}\rho)_{\mathcal{T}_h}-(\bm\Pi^o_{0}\bm\zeta,\nabla\theta)_{\mathcal{T}_h}\\
		&\qquad\qquad+(\bm\theta,\nabla^{\perp}\rho)_{\mathcal{T}_h}-(\bm\psi,\nabla^{\perp}p)-(\bm{\Pi}^o_{0}\bm\Upsilon,\nabla^{\perp}p)_{\mathcal{T}_h}-(\bm\theta,\nabla^{\perp}\rho-\nabla^{\perp} \Pi^o_{k}\rho)_{\mathcal{T}_h}\\
		&\qquad\qquad-(\bm\Pi^o_{k}\bm\psi-\bm\psi,\nabla^{\perp}p)_{\mathcal{T}_h}-\langle\bm\psi, \bm\sigma_h n\rangle_{\partial{\mathcal{T}_h}}-\langle \rho, \bm R_h t\rangle_{\partial{\mathcal{T}_h}}\\
		&\qquad\qquad-(\mathcal{C}^{-1}\bm\Pi^o_{0}\bm\zeta, \bm\sigma_h)_{\mathcal{T}_h}+(\bm\Pi^o_k\bm\psi,\nabla\cdot\bm \sigma_h)_{\mathcal{T}_h}-(\lambda t^{-2} \bm\Pi^o_0\bm \Upsilon,\bm R_h)_{\mathcal{T}_h}+(\Pi^o_k\rho,\nabla\times\bm R_h)\\
		&\qquad\qquad-(\nabla\cdot\bm\Pi^o_0\bm\zeta,\bm\theta_h)_{\mathcal{T}_h}+\langle\bm\Pi^o_{0}\bm\zeta n, \widehat{\bm\theta}_h\rangle_{\partial\mathcal{T}_h}-(\bm\theta_h, \nabla^{\perp} \Pi^o_{k}\rho)_{\mathcal{T}_h}-(\nabla\times\bm\Pi^o_{k}\bm\psi, p_h)_{\mathcal{T}_h}\\
		&\qquad\qquad+\langle\bm\Pi^o_{k}\bm\psi\cdot t,\widehat{p}_h\rangle_{\partial{\mathcal{T}_h}}-(\nabla\times\bm{\Pi}^o_{0}\bm\Upsilon,p_h)_{\mathcal{T}_h}+\langle\bm{\Pi}^o_{0}\bm\Upsilon\cdot t,\widehat{p}_h\rangle_{\partial\mathcal{T}_h}-\langle\bm\Pi^o_{k}\bm\psi\cdot t,\Pi^\partial_{k-1}p-p\rangle_{\partial{\mathcal{T}_h}}\\
		&\qquad\qquad+(\bm{\mathcal{I}}_h(\xi_{\bm\theta},\xi_{\widehat{\bm \theta}}), \nabla^{\perp} \rho-\nabla^{\perp} \Pi^o_{k}\rho)_{\mathcal{T}_h}-(\bm\psi-\bm\Pi^o_{k}\bm\psi, \nabla^{\perp} \mathcal{I}_h(\xi_p,\xi_{\widehat{p}}))_{\mathcal{T}_h}\\
		&\qquad(\text{by the continuous form \eqref{continuous form} and the discrete form \eqref{discrete formulation}})\\
		&\qquad=(\bm L+\bm f,\bm\psi)_{\mathcal{T}_h}-(\bm L_h+\bm f,\bm\Pi^o_k\bm\psi)_{\mathcal{T}_h}+\langle\bm\theta_h\cdot t,\Pi^\partial_{k-1}\rho-\Pi^o_k\rho\rangle_{\partial{\mathcal{T}_h}}\\
		&\qquad\qquad-\langle\bm\Pi^o_{k}\bm\psi\cdot t,\Pi^\partial_{k-1}p-p\rangle_{\partial{\mathcal{T}_h}}-(\bm\theta,\nabla^{\perp}\rho-\nabla^{\perp} \Pi^o_{k}\rho)_{\mathcal{T}_h}-(\bm\Pi^o_{k}\bm\psi-\bm\psi,\nabla^{\perp}p)_{\mathcal{T}_h}\\
		&\qquad\qquad+(\bm{\mathcal{I}}_h(\xi_{\bm\theta},\xi_{\widehat{\bm \theta}}), \nabla^{\perp} \rho-\nabla^{\perp} \Pi^o_{k}\rho)_{\mathcal{T}_h}-(\bm\psi-\bm\Pi^o_{k}\bm\psi, \nabla^{\perp} \mathcal{I}_h(\xi_p,\xi_{\widehat{p}}))_{\mathcal{T}_h}\\
		&\qquad\qquad+\langle\alpha_2(\bm\Pi_{\ell}^{\partial}\bm{\theta}_h-\widehat {\bm\theta}_h),\bm\Pi_{\ell}^{\partial}\bm\Pi^o_k\bm\psi-\bm\Pi_{\ell}^{\partial}\bm\psi\rangle_{\partial\mathcal T_h}+\langle\alpha_3(\Pi_{k-1}^{\partial}p_h-\widehat p_h),\Pi_{k-1}^{\partial}\Pi^o_k\rho-\Pi_{k-1}^{\partial}\rho\rangle_{\partial{\mathcal{T}_h}}
	\end{align*}
	so, we can prove
	\begin{align*}
		\|\xi_{\bm \theta}\|^2_{\mathcal{T}_h}&=(\xi_{\bm \theta}-\bm{\mathcal{I}}_h(\xi_{\bm\theta},\xi_{\widehat{\bm \theta}}),\xi_{\bm \theta})_{\mathcal{T}_h}+\left(\mathcal{C}^{-1} \bm\zeta-\mathcal{C}^{-1}\Pi^o_{0}\bm\zeta, \bm e_{\bm\sigma}\right)_{\mathcal{T}_h}+(\lambda t^{-2}( \bm \Upsilon-\Pi^o_0\bm \Upsilon),\bm e_{\bm R})_{\mathcal{T}_h}\\
		&\qquad-(\bm\zeta-\bm\Pi^o_{0}\bm\zeta, \nabla \bm{\mathcal{I}}_h(\xi_{\bm\theta},\xi_{\widehat{\bm \theta}}))_{\mathcal{T}_h}-(\bm \Upsilon-\bm{\Pi}^o_{0}\bm\Upsilon, \nabla^{\perp} \mathcal{I}_h(\xi_p,\xi_{\widehat{p}}))_{\mathcal{T}_h}+(\bm L+\bm f,\bm\psi)_{\mathcal{T}_h}\\
		&\qquad-(\bm L_h+\bm f,\bm\Pi^o_k\bm\psi)_{\mathcal{T}_h}+\langle\bm\theta_h\cdot t,\Pi^\partial_{k-1}\rho-\rho\rangle_{\partial{\mathcal{T}_h}}-\langle\bm\Pi^o_{k}\bm\psi\cdot t,\Pi^\partial_{k-1}p-p\rangle_{\partial{\mathcal{T}_h}}\\
		&\qquad-(\bm\theta,\nabla^{\perp}\rho-\nabla^{\perp} \Pi^o_{k}\rho)_{\mathcal{T}_h}-(\bm\Pi^o_{k}\bm\psi-\bm\psi,\nabla^{\perp}p)_{\mathcal{T}_h}+(\bm{\mathcal{I}}_h(\xi_{\bm\theta},\xi_{\widehat{\bm \theta}}), \nabla^{\perp} \rho-\nabla^{\perp} \Pi^o_{k}\rho)_{\mathcal{T}_h}\\
		&\qquad-(\bm\psi-\bm\Pi^o_{k}\bm\psi, \nabla^{\perp} \mathcal{I}_h(\xi_p,\xi_{\widehat{p}}))_{\mathcal{T}_h}+\langle\alpha_2(\bm\Pi_{\ell}^{\partial}\bm{\theta}_h-\widehat {\bm\theta}_h),\bm\Pi_{\ell}^{\partial}\bm\Pi^o_k\bm\psi-\bm\Pi_{\ell}^{\partial}\bm\psi\rangle_{\partial\mathcal T_h}\\
		&\qquad+\langle\alpha_3(\Pi_{k-1}^{\partial}p_h-\widehat p_h),\Pi_{k-1}^{\partial}\Pi^o_k\rho-\Pi_{k-1}^{\partial}\rho\rangle_{\partial{\mathcal{T}_h}}.
	\end{align*}
	Likely we also denote 
	\begin{align*}
		\mathbb{E}_4&:=|(\xi_{\bm \theta}-\bm{\mathcal{I}}_h(\xi_{\bm\theta},\xi_{\widehat{\bm \theta}}),\xi_{\bm \theta})_{\mathcal{T}_h}-(\bm \Upsilon-\bm{\Pi}^o_{0}\bm\Upsilon, \nabla^{\perp} \mathcal{I}_h(\xi_p,\xi_{\widehat{p}}))_{\mathcal{T}_h}-(\bm\zeta-\bm\Pi^o_{0}\bm\zeta, \nabla \bm{\mathcal{I}}_h(\xi_{\bm\theta},\xi_{\widehat{\bm \theta}}))_{\mathcal{T}_h}\\
		&\qquad+(\bm{\mathcal{I}}_h(\xi_{\bm\theta},\xi_{\widehat{\bm \theta}}), \nabla^{\perp} \rho-\nabla^{\perp} \Pi^o_{k}\rho)_{\mathcal{T}_h}-(\bm\psi-\bm\Pi^o_{k}\bm\psi, \nabla^{\perp} \mathcal{I}_h(\xi_p,\xi_{\widehat{p}}))_{\mathcal{T}_h}|\\
		\mathbb{E}_5&:=|(\bm L+\bm f,\bm\psi)_{\mathcal{T}_h}-(\bm L_h+\bm f,\bm\Pi^o_k\bm\psi)_{\mathcal{T}_h}+\left(\mathcal{C}^{-1} \bm\zeta-\mathcal{C}^{-1}\Pi^o_{0}\bm\zeta, \bm e_{\bm\sigma}\right)_{\mathcal{T}_h}\\
		&\qquad+(\lambda t^{-2}( \bm \Upsilon-\Pi^o_0\bm \Upsilon),\bm e_{\bm R})_{\mathcal{T}_h}-(\bm\theta,\nabla^{\perp}\rho-\nabla^{\perp} \Pi^o_{k}\rho)_{\mathcal{T}_h}-(\bm\Pi^o_{k}\bm\psi-\bm\psi,\nabla^{\perp}p)_{\mathcal{T}_h}|\\
		\mathbb{E}_6&:=|\langle\bm\theta_h\cdot t,\Pi^\partial_{k-1}\rho-\rho\rangle_{\partial{\mathcal{T}_h}}-\langle\bm\Pi^o_{k}\bm\psi\cdot t,\Pi^\partial_{k-1}p-p\rangle_{\partial{\mathcal{T}_h}}|\\
		\mathbb{E}_7&:=|\langle\alpha_2(\bm\Pi_{\ell}^{\partial}\bm{\theta}_h-\widehat {\bm\theta}_h),\bm\Pi_{\ell}^{\partial}\bm\Pi^o_k\bm\psi-\bm\Pi_{\ell}^{\partial}\bm\psi\rangle_{\partial\mathcal T_h}+\langle\alpha_3(\Pi_{k-1}^{\partial}p_h-\widehat p_h),\Pi_{k-1}^{\partial}\Pi^o_k\rho-\Pi_{k-1}^{\partial}\rho\rangle_{\partial{\mathcal{T}_h}}|.
	\end{align*}\par
	By Cauchy-Schwartz's inequality, \eqref{5}, \eqref{pro1}, \eqref{pro2}, \eqref{co-regularity2}, \Cref{theorem2} and HDG-Poincare's inequality \eqref{HDGpoincare}, it holds
	\begin{align*}
		\mathbb{E}_4&\leq C h(\|\xi_{\bm \theta}\|_{\mathcal{T}_h}+\|\rho\|_1+\|\bm\zeta\|_1)(\|\nabla\xi_{\bm \theta}\|_{\mathcal{T}_h}+\|\mathfrak{h}^{-1/2}(\bm\Pi^\partial_{\ell}\xi_{\bm\theta}-\xi_{\widehat{\bm \theta}})\|_{\mathcal{T}_h})\\
		&\qquad+C h t^{-1}\|\bm\Upsilon\|_1\cdot t(\|\nabla\xi_p\|_{\mathcal{T}_h}+\|\mathfrak{h}^{-1/2}(\xi_p-\xi_{\widehat{p}})\|_{\mathcal{T}_h})\\
		&\qquad+C h\|\bm\psi\|_2(\|\mathfrak{h}\nabla\xi_p\|_{\mathcal{T}_h}+\|\mathfrak{h}^{1/2}(\xi_p-\xi_{\widehat{p}})\|_{\mathcal{T}_h})\\
		&\leq Ch^{k+1}\|\xi_{\bm \theta}\|_{\mathcal{T}_h}(\|r\|_{k+1}+|\bm f|_{k-1}+\|\bm \sigma \|_k+\|\bm \theta\|_{k+1}+t\|p\|_{k+1})\\
		&(\text{using the same argument for $E_6$ and by \eqref{co-regularity2} and \Cref{theorem2}}).
	\end{align*}
	We note $(\bm\Pi^o_{k-1}\bm L+\bm\Pi^o_{k-1}\bm f, \bm\psi-\bm\Pi^o_k\bm\psi)=0$ and by Cauchy-Schwartz's inequality, the orthogonality of projections ${\rm L}^2$, \eqref{5}, \eqref{key} and \eqref{l21}, it holds 
	\begin{align*}
		\mathbb{E}_5&\leq C h^{k+1}(\|\bm\psi\|_2+\|\rho\|_1)(\|r\|_{k+1}+\|g\|_{k-1}+|\bm L|_{k}+\|\bm f\|_{k-1}+\|\bm\theta\|_{k+1}+\|p\|_k)\\
		&\qquad+C h\|\bm\zeta\|_1\|\bm e_{\bm\sigma}\|_0+C h\|\bm \Upsilon\|_1\|\bm e_{\bm R}\|_0\\
		&\leq C h^{k+1}\|\xi_{\bm \theta}\|_{\mathcal{T}_h}(\|g\|_{k-1}+\|\bm f\|_{k-1}+\|r\|_{k+1}+|\bm L|_{k}+\|\bm \sigma \|_k+t^{-1}\|\bm R \|_k+\|\bm\theta\|_{k+1}+\|p\|_k).
	\end{align*}
	By the orthogonality of projections ${\rm L}^2$, it holds
	\begin{align*}
		\mathbb{E}_6&=|-\langle(\xi_{\bm\theta}-\bm\Pi^o_{k-1}\xi_{\bm\theta})\cdot t,\Pi^\partial_{k-1}\rho-\rho\rangle_{\partial{\mathcal{T}_h}}+\langle(\bm\theta-\bm\Pi^o_{k}\bm\theta)\cdot t,\Pi^\partial_{k-1}\rho-\rho\rangle_{\partial{\mathcal{T}_h}}\\
		&\qquad-\langle(\bm\Pi^o_{k}\bm\psi-\bm\psi)\cdot t,\Pi^\partial_{k-1}p-p\rangle_{\partial{\mathcal{T}_h}}|.
	\end{align*}
	So by \eqref{6} and \eqref{co-regularity2} and \eqref{mainlemma}
	\begin{align*}
		\mathbb{E}_6&\leq Ch\|\nabla\xi_{\bm\theta}\|_{\mathcal{T}_h}\|\rho\|_1+Ch^{k+1}\|\bm\theta\|_{k+1}\|\rho\|_{1}+Ch^{k+1}\|\bm\psi\|_{2}\|p\|_{k}\\
		&\leq C h^{k+1}\|\xi_{\bm\theta}\|_{\mathcal{T}_h}(\|r\|_{k+1}+|\bm f|_{k-1}+\|\bm \sigma \|_k+t^{-1}\|\bm R \|_k+\|\bm \theta\|_{k+1}+t\|p\|_{k+1}+\|p\|_{k}).
	\end{align*}
	By the orthogonality of projections ${\rm L}^2$, it holds
	\begin{align*}
		\mathbb{E}_7&=|\langle\alpha_2(\bm\Pi_{\ell}^{\partial}\xi_{\bm{\theta}}-\xi_{\widehat {\bm\theta}}),\bm\Pi_{\ell}^{\partial}\bm\Pi^o_k\bm\psi-\bm\Pi_{\ell}^{\partial}\bm\psi\rangle_{\partial\mathcal T_h}-\langle\alpha_2(\bm\Pi_{k}^{o}\bm{\theta}- \bm\theta),\bm\Pi_{\ell}^{\partial}\bm\Pi^o_k\bm\psi-\bm\Pi_{\ell}^{\partial}\bm\psi\rangle_{\partial\mathcal T_h}\\
		&\qquad+\langle\alpha_3(\Pi_{k-1}^{\partial}\xi_{p}-\xi_{\widehat p}),\Pi_{k-1}^{\partial}\Pi^o_k\rho-\Pi_{k-1}^{\partial}\rho\rangle_{\partial{\mathcal{T}_h}}-\langle\alpha_3(\Pi_{k}^{o}p-p),\Pi_{k-1}^{\partial}\Pi^o_k\rho-\Pi_{k-1}^{\partial}\rho\rangle_{\partial{\mathcal{T}_h}}|.
	\end{align*}
	so we get by \eqref{5}, \eqref{6} and \Cref{mainlemma}
	\begin{align*}
		\mathbb{E}_7&\leq C h\|\bm\psi\|_2\|\alpha_2^{1/2}(\bm\Pi_{\ell}^{\partial}\xi_{\bm{\theta}}-\xi_{\widehat {\bm\theta}})\|_{\partial\mathcal{T}_h}+Ch^{k+1}\|\bm\theta\|_{k+1}\|\bm\psi\|_2\\
		&\qquad+Ch(\|\rho\|_1+t\|\rho\|_2)\|\alpha_3^{1/2}(\Pi_{k-1}^{\partial}\xi_{p}-\xi_{\widehat p})\|_{\partial\mathcal{T}_h}+Ch^{k+1}(\|p\|_k+t\|p\|_{k+1})(\|\rho\|_1+t\|\rho\|_2)\\
		&\leq Ch^{k+1}\|\xi_{\bm\theta}\|_{\mathcal{T}_h}(\|r\|_{k+1}+|\bm f|_{k-1}+\|\bm \sigma \|_k+t^{-1}\|\bm R \|_k+\|\bm \theta\|_{k+1}+t\|p\|_{k+1}+\|p\|_{k}).
	\end{align*}\par
	Combining the estimate of $\mathbb{E}_i$, we immediately derive \eqref{l22}, thus we completes the proof.
\end{proof}
\section{Implementation}\label{implemention}

For \eqref{discrete stage1} and \eqref{discrete stage3}, we arrange the interior unknowns and boundary unknowns separately, then we can get the following linear system:
\begin{align}\label{51}
\begin{pmatrix}
	A_{11} &A_{12} \\
	A_{12}^T &-A_{22}
\end{pmatrix}
\begin{pmatrix}
	x_1 \\
	x_2
\end{pmatrix}
=
\begin{pmatrix}
	b_1 \\
	b_2
\end{pmatrix},
\end{align}
where $A_{11}$ is piecewise diagonal, that means the inverse of $A_{11}$ can be obtained efficiently. Therefore, instead of solving \eqref{51}, we are going to solve the Schur complement system
\begin{align} \label{52}
(A_{22}+A_{12}^TA_{11}^{-1}A_{12})x_2=-b_2+A_{12}^TA_{11}^{-1}b_1,
\end{align}
which is an SPD system, and any AMG solver or AMG-preconditioner solver can solve this system efficiently.
After solving \eqref{52}, we obtain $x_1$ by
\begin{align}\label{53}
x_1=A_{11}^{-1}(b_1-A_{12}x_2).
\end{align}
For \eqref{discrete stage2}, we still arrange the interior unknowns and boundary unknowns separately. Therefore, we still have the system of form \eqref{51}, and we also turn to solve \eqref{52} and \eqref{53}, the different is that here \eqref{52} is not an SPD system, the system \eqref{52} is a saddle-point system with the form
\begin{align}\label{54}
\begin{pmatrix}
	B_{11} &B_{12} \\
	B_{12}^T &-B_{22}
\end{pmatrix}
\begin{pmatrix}
	\widehat{\theta} \\
	\widehat p
\end{pmatrix}
=
\begin{pmatrix}
	c_1 \\
	c_2
\end{pmatrix},
\end{align}
This time the inverse of $B_{11}$ can not be obtained efficiently. Still, we can solve the system 
\begin{align} \label{55}
(B_{22}+B_{12}^TB_{11}^{-1}B_{12})\widehat p=-c_2+B_{12}^TB_{11}^{-1}c_1,
\end{align}
by iterative methods, Conjugate Gradient (CG) method for example. Remember that we do not need to compute $B_{11}^{-1}$, but we can use AMG-preconditioner solver to get matrix-vector multiplication $B_{11}^{-1}v$ for any vector $v$ that matches the dimensions.

At last we simply use the following formulation to recover $\widehat{\theta}$:
\begin{align}\nonumber
\widehat \theta=B_{11}^{-1}(c_1-B_{12}\widehat p).
\end{align}

We notice that in all steps of our calculation, the time is mainly spent on solving the system \eqref{55}.

\section{Numerical experiments}\label{section6}
This section provides some numerical results to verify the performance of the HDG scheme. All examples are coded in C++ with the library Eigen\cite{eigenweb} and the library Hypre\cite{10.1007/3-540-47789-6_66}.\par
\subsection{Numerical Results}\label{analytical solution}
We compute a square plate with continuous solution to show the numerical results. This solution is taken from \cite{HU2008464}. The domain $\Omega$ is simple unit square $(0, 1)^2$, the corresponding parameters are taken as $E = 1.0, \nu= 0.3$ and $\kappa=\frac 5 6$. Also, we mainly consider the hard clamped boundary. The continuous solution $\bm\theta,\omega$ is of the form
{
\begin{equation}\nonumber
	\left\{
	\begin{aligned}
		&\bm\theta=(100y^3(y-1)^3x^2(x-1)^2(2x-1), 100x^3(x-1)^3y^2(y-1)^2(2y-1))^T, \\
		&\omega=100\bigg(\frac{1}{3}x^3(x-1)^3y^3(y-1)^3-\frac{2t^2}{5(1-\nu)}[y^3(y-1)^3x(x-1)(5x^2-5x+1)\\ 
		&\quad \ +x^3(x-1)^3y(y-1)(5y^2-5y+1)] \bigg).
	\end{aligned}
	\right.
\end{equation}}
Therefore, the body force and transverse loading are
{
\begin{equation}\nonumber
	\left\{
	\begin{aligned}
		&\bm f=(0,0)^T, \\
		&g = (x^3(x-1)^3(5y^2-5y+1)+y^3(y-1)^3(5x^2-5x+1)\\
		& \quad \ +x(x-1)y(y-1)(5x^2-5x+1)(5y^2-5y+1)).
	\end{aligned}
	\right.
\end{equation}}\par
We mainly compute three different thick plates with the pane thickness $t=1$, $t=0.1$ and $t=0.01$. We display the results in following tables. From the results we can see that our HDG scheme, with $k=1,2,3$, yields optimal convergence rates which are uniform with respect to the plane thickness $t$ as $(k+1)$-th order rate for $\|\bm\theta-\bm\theta_h\|_{\mathcal{T}_h}, \|\omega-\omega_h\|_{\mathcal{T}_h}$  and $k$-th order for $\|\bm\sigma-\bm\sigma_h\|_{\mathcal{T}_h}$, $\|\bm\gamma-\bm\gamma_h\|_{\mathcal{T}_h}$. These are conformable to the theoretical results of \Cref{mainresult} and \Cref{L2mainresult}. We denote ``Iter'' in the tables equals iterations of linear system \eqref{discrete stage2}. Here we use a Schur complement CG iteration method to solve the linear system.
\begin{figure}[H]
\centering
\subfigure[$4\times4$]{
	\includegraphics[width=0.46\linewidth]{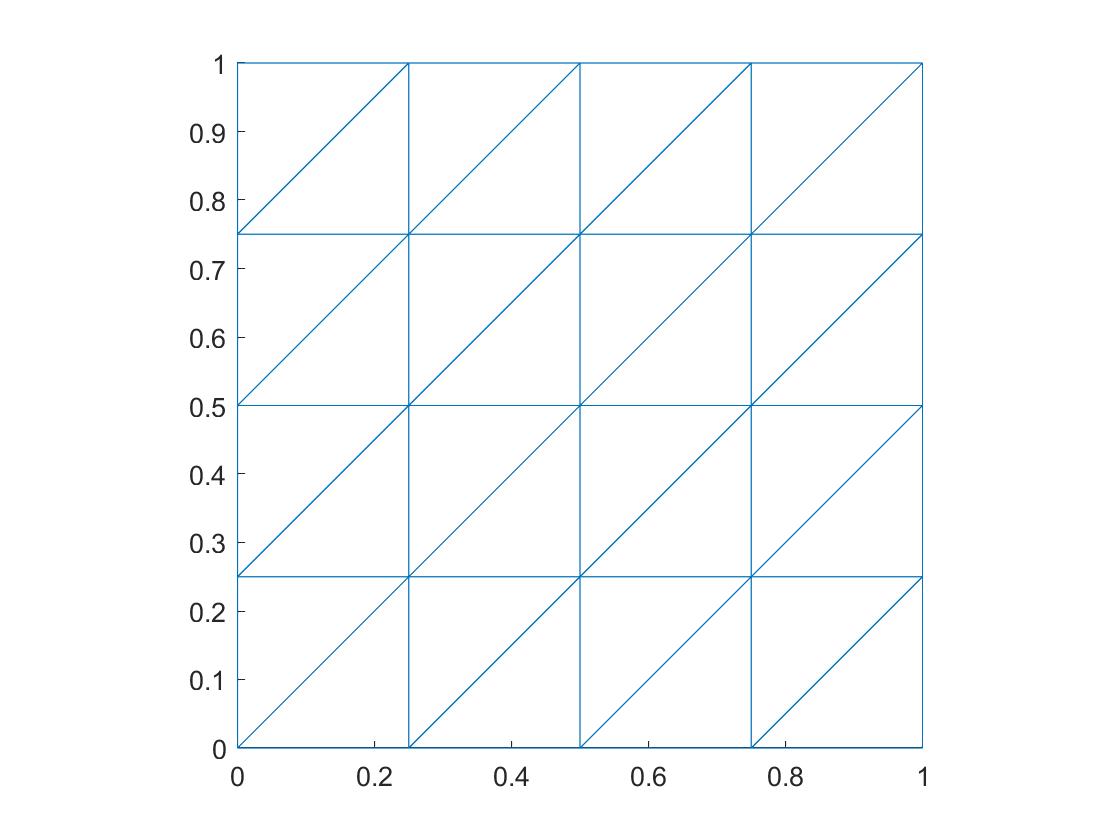} 
}\quad\subfigure[$8\times8$]{
	\includegraphics[width=0.46\linewidth]{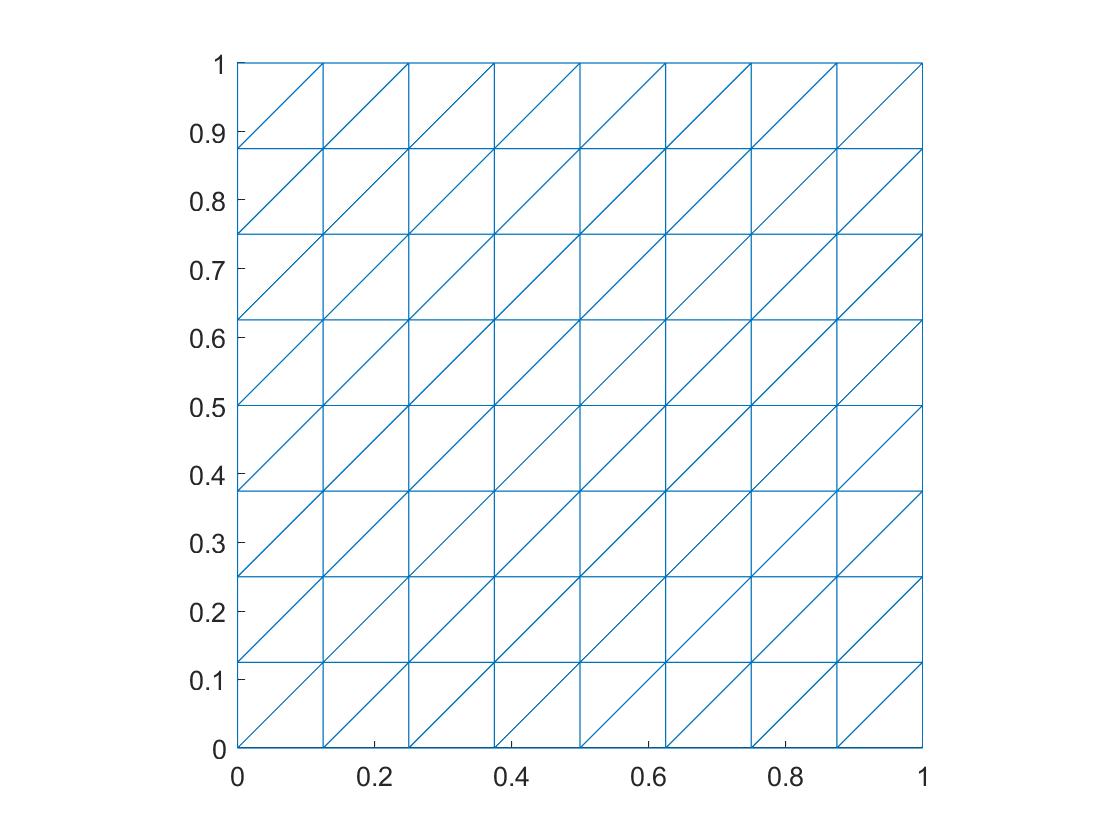}
}
\caption{Triangle meshes for square plate}
\end{figure} 

\begin{table}[H] \tiny
\centering
\resizebox{\textwidth}{!}{
	\begin{tabular}{c|c|c|c|c|c|c|c|c|c|c}
		\Xhline{1pt}
		\multirow{2}{*}{$k$}&\multirow{2}{*}{Mesh}&\multirow{2}{*}{Iter}&\multicolumn{2}{c|}{$\|\bm\theta-\bm\theta_h\|_{\mathcal{T}_h}$}&\multicolumn{2}{c|}{$t\|\bm\gamma-\bm\gamma_h\|_{\mathcal{T}_h}$}&\multicolumn{2}{c|}{$\|\bm\sigma-\bm\sigma_h\|_{\mathcal{T}_h}$}&\multicolumn{2}{c}{$\|\omega-\omega_h\|_{\mathcal{T}_h}$}\\
		\cline{4-11}
		& & &Error&Rate&Error&Rate&Error&Rate&Error&Rate\\
		\hline
		\multirow{7}{*}{1}
		&$2\times2$	&	12	&	1.4101E-02	&	-	&	1.1267E-01	&	-	&	1.1518E-02	&	-	&	4.2883E-02	&	-	\\
		&$4\times4$	&	24	&	5.7887E-03	&	1.28 	&	9.5251E-02	&	0.24 	&	6.1928E-03	&	0.90 	&	1.9591E-02	&	1.13 	\\
		&$8\times8$	&	32	&	1.8465E-03	&	1.65 	&	5.3090E-02	&	0.84 	&	3.4310E-03	&	0.85 	&	5.7566E-03	&	1.77 	\\
		&$16\times16$	&	41	&	4.9623E-04	&	1.90 	&	2.7260E-02	&	0.96 	&	1.7645E-03	&	0.96 	&	1.5051E-03	&	1.94 	\\
		&$32\times32$	&	43	&	1.2645E-04	&	1.97 	&	1.3721E-02	&	0.99 	&	8.8843E-04	&	0.99 	&	3.8070E-04	&	1.98 	\\
		&$64\times64$	&	44	&	3.1768E-05	&	1.99 	&	6.8721E-03	&	1.00 	&	4.4499E-04	&	1.00 	&	9.5459E-05	&	2.00 	\\
		&$128\times128$	&	42	&	7.9517E-06	&	2.00 	&	3.4375E-03	&	1.00 	&	2.2259E-04	&	1.00 	&	2.3882E-05	&	2.00 	\\			
		\Xhline{1pt}
		
		\multirow{7}{*}{2}&$2\times2$	&	39	&	6.3940E-03	&	-	&	7.7201E-02	&	-	&	6.5495E-03	&	-	&	1.3415E-02	&	-	\\
		&$4\times4$	&	56	&	1.1447E-03	&	2.48 	&	2.6054E-02	&	1.57 	&	2.5770E-03	&	1.35 	&	2.7421E-03	&	2.29 	\\
		&$8\times8$	&	71	&	1.8700E-04	&	2.61 	&	7.1550E-03	&	1.86 	&	7.2715E-04	&	1.83 	&	3.8361E-04	&	2.84 	\\
		&$16\times16$	&	65	&	3.9848E-05	&	2.23 	&	1.8381E-03	&	1.96 	&	1.9269E-04	&	1.92 	&	4.9976E-05	&	2.94 	\\
		&$32\times32$	&	64	&	7.7231E-06	&	2.37 	&	4.6384E-04	&	1.99 	&	5.2196E-05	&	1.88 	&	6.3649E-06	&	2.97 	\\
		&$64\times64$	&	64	&	1.2083E-06	&	2.68 	&	1.1638E-04	&	1.99 	&	1.3966E-05	&	1.90 	&	8.0278E-07	&	2.99 	\\
		&$128\times128$	&	58	&	1.6762E-07	&	2.85 	&	2.9141E-05	&	2.00 	&	3.6333E-06	&	1.94 	&	1.0079E-07	&	2.99 	\\
		
		\Xhline{1pt}
		\multirow{7}{*}{3}&$2\times2$	&	69	&	1.9516E-03	&	-	&	3.6223E-02	&	-	&	3.1544E-03	&	-	&	5.6861E-03	&	-	\\
		&$4\times4$	&	104	&	2.2076E-04	&	3.14 	&	5.9261E-03	&	2.61 	&	6.0651E-04	&	2.38 	&	4.1451E-04	&	3.78 	\\
		&$8\times8$	&	112	&	1.8750E-05	&	3.56 	&	7.9693E-04	&	2.89 	&	8.7173E-05	&	2.80 	&	2.7813E-05	&	3.90 	\\
		&$16\times16$	&	114	&	1.2109E-06	&	3.95 	&	1.0241E-04	&	2.96 	&	1.1194E-05	&	2.96 	&	1.7942E-06	&	3.95 	\\
		&$32\times32$	&	106	&	7.6609E-08	&	3.98 	&	1.2919E-05	&	2.99 	&	1.4082E-06	&	2.99 	&	1.1350E-07	&	3.98 	\\
		&$64\times64$	&	123	&	4.8394E-09	&	3.98 	&	1.6197E-06	&	3.00 	&	1.7639E-07	&	3.00 	&	7.1268E-09	&	3.99 	\\
		&$128\times128$	&	143	&	3.0459E-10	&	3.99 	&	2.0268E-07	&	3.00 	&	2.2066E-08	&	3.00 	&	4.4628E-10	&	4.00 	\\

		\Xhline{1pt}
\end{tabular}}
\caption{Results for $t=1$ on triangle meshes}
\label{t=1:table1}
\end{table}

\begin{table}[H]\tiny
\centering
\resizebox{\textwidth}{!}{
	\begin{tabular}{c|c|c|c|c|c|c|c|c|c|c}
		\Xhline{1pt}
		\multirow{2}{*}{$k$}&\multirow{2}{*}{Mesh}&\multirow{2}{*}{Iter}&\multicolumn{2}{c|}{$\|\bm\theta-\bm\theta_h\|_{\mathcal{T}_h}$}&\multicolumn{2}{c|}{$t\|\bm\gamma-\bm\gamma_h\|_{\mathcal{T}_h}$}&\multicolumn{2}{c|}{$\|\bm\sigma-\bm\sigma_h\|_{\mathcal{T}_h}$}&\multicolumn{2}{c}{$\|\omega-\omega_h\|_{\mathcal{T}_h}$}\\
		\cline{4-11}
		& & &Error&Rate&Error&Rate&Error&Rate&Error&Rate\\
		\hline
		\multirow{7}{*}{1}
		&$2\times2$	&	12	&	1.3699E-02	&	-	&	1.1321E-02	&	-	&	1.1401E-02	&	-	&	3.2301E-03	&	-	\\
		&$4\times4$	&	25	&	5.7317E-03	&	1.26 	&	8.7692E-03	&	0.37 	&	6.2085E-03	&	0.88 	&	9.6966E-04	&	1.74 	\\
		&$8\times8$	&	37	&	1.8425E-03	&	1.64 	&	5.1815E-03	&	0.76 	&	3.4492E-03	&	0.85 	&	2.6533E-04	&	1.87 	\\
		&$16\times16$	&	46	&	4.7357E-04	&	1.96 	&	2.7151E-03	&	0.93 	&	1.7605E-03	&	0.97 	&	6.4744E-05	&	2.03 	\\
		&$32\times32$	&	47	&	1.2240E-04	&	1.95 	&	1.3715E-03	&	0.99 	&	8.8750E-04	&	0.99 	&	1.6085E-05	&	2.01 	\\
		&$64\times64$	&	43	&	3.1427E-05	&	1.96 	&	6.8718E-04	&	1.00 	&	4.4486E-04	&	1.00 	&	4.0289E-06	&	2.00 	\\
		&$128\times128$	&	42	&	7.9288E-06	&	1.99 	&	3.4375E-04	&	1.00 	&	2.2257E-04	&	1.00 	&	1.0082E-06	&	2.00 	\\
		
		\Xhline{1pt}
		
		\multirow{8}{*}{2}&$2\times2$	&	23	&	6.0449E-03	&	-	&	7.3006E-03	&	-	&	6.3656E-03	&	-	&	9.8520E-04	&	-	\\
		&$4\times4$	&	54	&	8.9741E-04	&	2.75 	&	2.4783E-03	&	1.56 	&	2.4921E-03	&	1.35 	&	1.6980E-04	&	2.54 	\\
		&$8\times8$	&	79	&	1.3264E-04	&	2.76 	&	6.9056E-04	&	1.84 	&	7.2188E-04	&	1.79 	&	2.6108E-05	&	2.70 	\\
		&$16\times16$	&	80	&	2.1277E-05	&	2.64 	&	1.8113E-04	&	1.93 	&	1.8802E-04	&	1.94 	&	3.4754E-06	&	2.91 	\\
		&$32\times32$	&	72	&	4.6672E-06	&	2.19 	&	4.6613E-05	&	1.96 	&	4.8548E-05	&	1.95 	&	4.4193E-07	&	2.98 	\\
		&$64\times64$	&	63	&	9.8039E-07	&	2.25 	&	1.1742E-05	&	1.99 	&	1.3160E-05	&	1.88 	&	5.5482E-08	&	2.99 	\\
		&$128\times128$	&	68	&	1.5742E-07	&	2.64 	&	2.9265E-06	&	2.00 	&	3.5492E-06	&	1.89 	&	6.9422E-09	&	3.00 	\\
		
		\Xhline{1pt}
		\multirow{7}{*}{3}&$2\times2$	&	38	&	1.6684E-03	&	-	&	3.5916E-03	&	-	&	3.0420E-03	&	-	&	3.9487E-04	&	-	\\
		&$4\times4$	&	79	&	1.5251E-04	&	3.45 	&	5.7646E-04	&	2.64 	&	5.8831E-04	&	2.37 	&	5.3478E-05	&	2.88 	\\
		&$8\times8$	&	117	&	1.1239E-05	&	3.76 	&	7.7665E-05	&	2.89 	&	8.3829E-05	&	2.81 	&	3.8806E-06	&	3.78 	\\
		&$16\times16$	&	118	&	7.8941E-07	&	3.83 	&	1.0125E-05	&	2.94 	&	1.0817E-05	&	2.95 	&	2.5079E-07	&	3.95 	\\
		&$32\times32$	&	111	&	6.1963E-08	&	3.67 	&	1.2886E-06	&	2.97 	&	1.3788E-06	&	2.97 	&	1.5797E-08	&	3.99 	\\
		&$64\times64$	&	131	&	4.5209E-09	&	3.78 	&	1.6191E-07	&	2.99 	&	1.7501E-07	&	2.98 	&	9.8904E-10	&	4.00 	\\
		&$128\times128$	&	142	&	2.9906E-10	&	3.92 	&	2.0266E-08	&	3.00 	&	2.2017E-08	&	2.99 	&	6.1839E-11	&	4.00 	\\
		\Xhline{1pt}
\end{tabular}}
\caption{Results for $t=0.1$ on triangle meshes}
\label{t=0.1:table1}
\end{table}

\begin{table}[H]\tiny
\centering
\resizebox{\textwidth}{!}{
	\begin{tabular}{c|c|c|c|c|c|c|c|c|c|c}
		\Xhline{1pt}
		\multirow{2}{*}{$k$}&\multirow{2}{*}{Mesh}&\multirow{2}{*}{Iter}&\multicolumn{2}{c|}{$\|\bm\theta-\bm\theta_h\|_{\mathcal{T}_h}$}&\multicolumn{2}{c|}{$t\|\bm\gamma-\bm\gamma_h\|_{\mathcal{T}_h}$}&\multicolumn{2}{c|}{$\|\bm\sigma-\bm\sigma_h\|_{\mathcal{T}_h}$}&\multicolumn{2}{c}{$\|\omega-\omega_h\|_{\mathcal{T}_h}$}\\
		\cline{4-11}
		& & &Error&Rate&Error&Rate&Error&Rate&Error&Rate\\
		\hline
		\multirow{7}{*}{1}
		&$2\times2$	&	10	&	1.3671E-02	&	-	&	1.1367E-03	&	-	&	1.1387E-02	&	-	&	2.8682E-03	&	-	\\
		&$4\times4$	&	17	&	5.6979E-03	&	1.26 	&	7.9353E-04	&	0.52 	&	6.2099E-03	&	0.87 	&	8.5458E-04	&	1.75 	\\
		&$8\times8$	&	24	&	1.9548E-03	&	1.54 	&	4.2548E-04	&	0.90 	&	3.4801E-03	&	0.84 	&	2.4544E-04	&	1.80 	\\
		&$16\times16$	&	25	&	5.4575E-04	&	1.84 	&	2.2217E-04	&	0.94 	&	1.7659E-03	&	0.98 	&	6.3364E-05	&	1.95 	\\
		&$32\times32$	&	31	&	1.3826E-04	&	1.98 	&	1.2302E-04	&	0.85 	&	8.8334E-04	&	1.00 	&	1.5686E-05	&	2.01 	\\
		&$64\times64$	&	39	&	3.2883E-05	&	2.07 	&	6.6637E-05	&	0.88 	&	4.4217E-04	&	1.00 	&	3.7339E-06	&	2.07 	\\
		&$128\times128$	&	44	&	7.6144E-06	&	2.11 	&	3.4206E-05	&	0.96 	&	2.2172E-04	&	1.00 	&	8.7447E-07	&	2.09 	\\
		
		\Xhline{1pt}
		
		\multirow{8}{*}{2}
		&$2\times2$	&	17	&	6.1159E-03	&	-	&	7.1643E-04	&	-	&	6.3847E-03	&	-	&	9.5754E-04	&	-	\\
		&$4\times4$	&	30	&	9.1901E-04	&	2.73 	&	2.6662E-04	&	1.43 	&	2.4867E-03	&	1.36 	&	1.7457E-04	&	2.46 	\\
		&$8\times8$	&	37	&	1.2542E-04	&	2.87 	&	9.5883E-05	&	1.48 	&	7.1873E-04	&	1.79 	&	2.6780E-05	&	2.70 	\\
		&$16\times16$	&	42	&	1.6965E-05	&	2.89 	&	2.9814E-05	&	1.69 	&	1.8900E-04	&	1.93 	&	3.5208E-06	&	2.93 	\\
		&$32\times32$	&	50	&	2.2717E-06	&	2.90 	&	7.5139E-06	&	1.99 	&	4.8174E-05	&	1.97 	&	4.4598E-07	&	2.98 	\\
		&$64\times64$	&	65	&	3.0847E-07	&	2.88 	&	1.8125E-06	&	2.05 	&	1.2117E-05	&	1.99 	&	5.6030E-08	&	2.99 	\\
		&$128\times128$	&	78	&	4.7096E-08	&	2.71 	&	5.2227E-07	&	1.80 	&	3.0327E-06	&	2.00 	&	7.0184E-09	&	3.00 	\\

		\Xhline{1pt}
		\multirow{7}{*}{3}
		&$2\times2$	&	23	&	1.6804E-03	&	-	&	3.7706E-04	&	-	&	3.0238E-03	&	-	&	3.9919E-04	&	-	\\
		&$4\times4$	&	43	&	1.4371E-04	&	3.55 	&	7.7838E-05	&	2.28 	&	5.8361E-04	&	2.37 	&	5.4096E-05	&	2.88 	\\
		&$8\times8$	&	55	&	1.0479E-05	&	3.78 	&	1.3820E-05	&	2.49 	&	8.3668E-05	&	2.80 	&	3.9387E-06	&	3.78 	\\
		&$16\times16$	&	67	&	6.8187E-07	&	3.94 	&	1.9520E-06	&	2.82 	&	1.0824E-05	&	2.95 	&	2.5486E-07	&	3.95 	\\
		&$32\times32$	&	89	&	4.3044E-08	&	3.99 	&	2.0906E-07	&	3.22 	&	1.3636E-06	&	2.99 	&	1.6061E-08	&	3.99 	\\
		&$64\times64$	&	125	&	2.7120E-09	&	3.99 	&	2.0610E-08	&	3.34 	&	1.7047E-07	&	3.00 	&	1.0058E-09	&	4.00 	\\
		&$128\times128$	&	181	&	1.8206E-10	&	3.90 	&	2.5070E-09	&	3.04 	&	2.1290E-08	&	3.00 	&	6.2890E-11	&	4.00 	\\
		
		\Xhline{1pt}
\end{tabular}}
\caption{Results for $t=0.01$ on triangle meshes}
\label{t=0.01:table1}
\end{table}

\begin{figure}[H]
\centering
\subfigure[$4\times4$]{
	\includegraphics[width=0.45\linewidth]{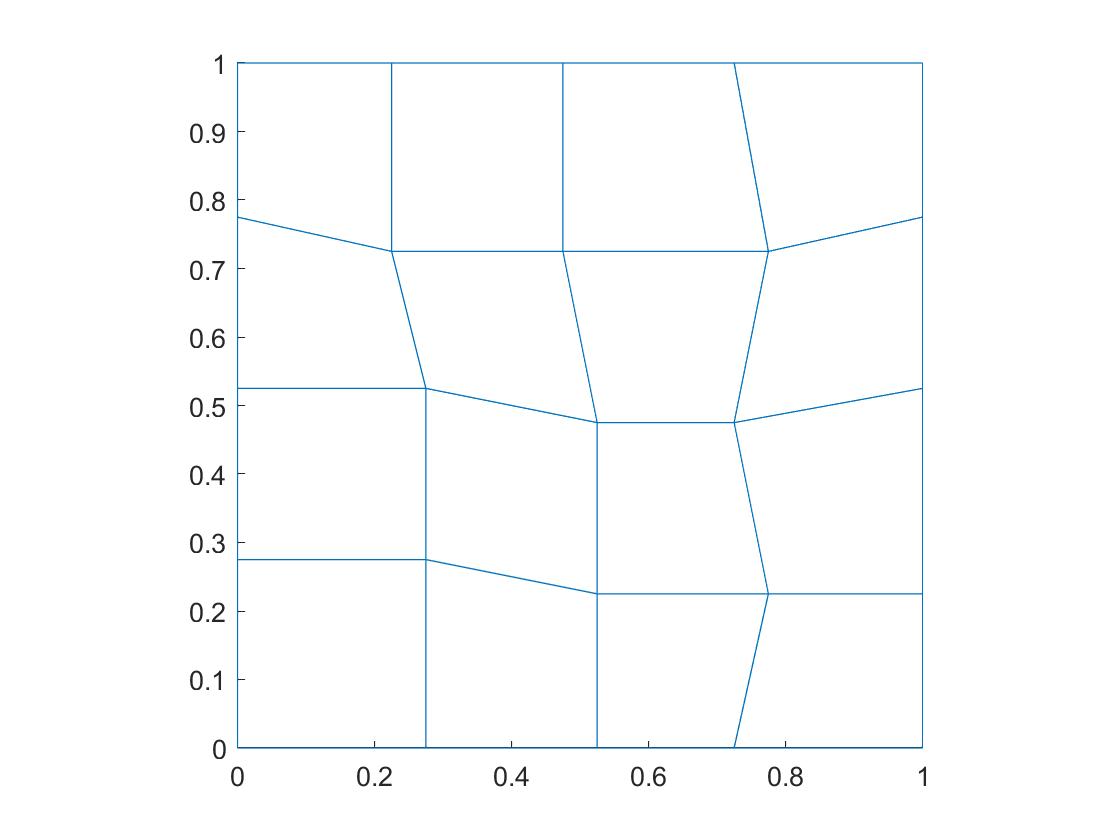} 
}\quad\subfigure[$8\times8$]{
	\includegraphics[width=0.45\linewidth]{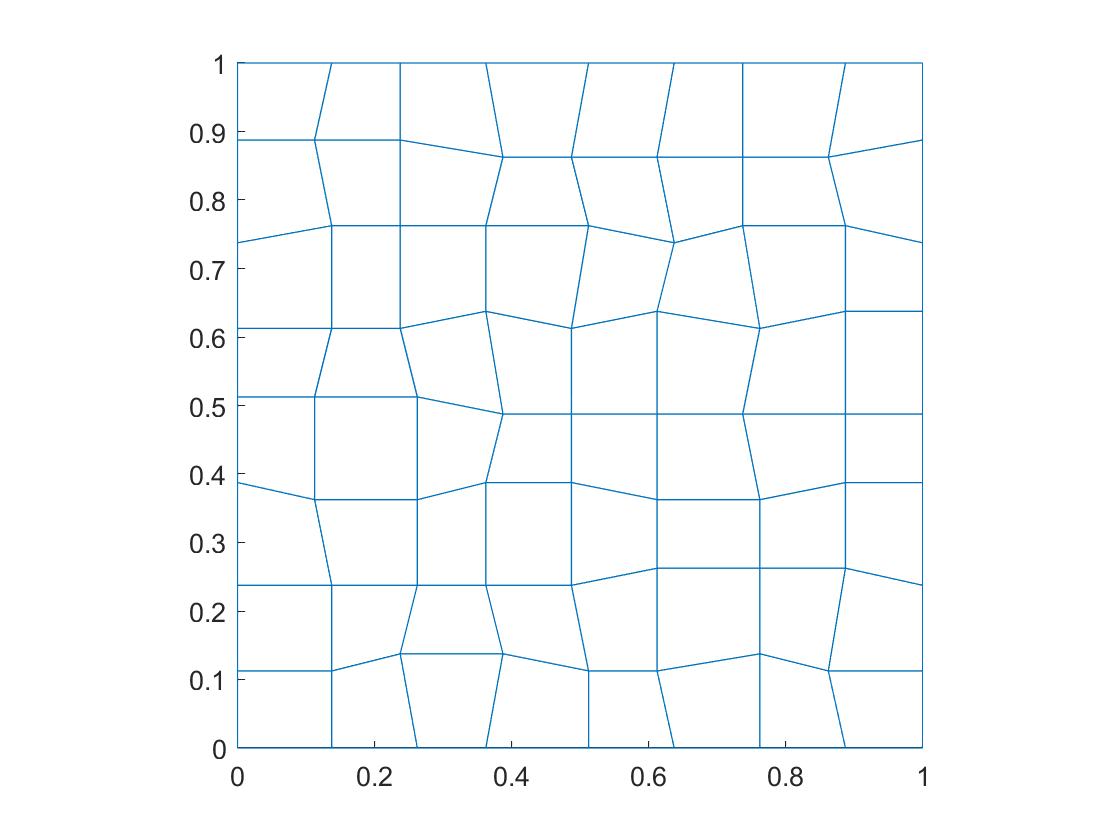}
}
\caption{Quadrangle meshes for square plate}
\end{figure}

\begin{table}[H]\tiny
\centering
\resizebox{\textwidth}{!}{
	\begin{tabular}{c|c|c|c|c|c|c|c|c|c|c}
		\Xhline{1pt}
		\multirow{2}{*}{$k$}&\multirow{2}{*}{Mesh}&\multirow{2}{*}{Iter}&\multicolumn{2}{c|}{$\|\bm\theta-\bm\theta_h\|_{\mathcal{T}_h}$}&\multicolumn{2}{c|}{$t\|\bm\gamma-\bm\gamma_h\|_{\mathcal{T}_h}$}&\multicolumn{2}{c|}{$\|\bm\sigma-\bm\sigma_h\|_{\mathcal{T}_h}$}&\multicolumn{2}{c}{$\|\omega-\omega_h\|_{\mathcal{T}_h}$}\\
		\cline{4-11}
		& & &Error&Rate&Error&Rate&Error&Rate&Error&Rate\\
		\hline
		\multirow{7}{*}{1}
		&$2\times2$	&	28	&	1.7568E-02	&	-	&	1.1626E-01	&	-	&	1.1571E-02	&	-	&	4.3815E-02	&	-	\\
		&$4\times4$	&	44	&	9.9463E-03	&	0.82 	&	1.1512E-01	&	0.01 	&	8.4114E-03	&	0.46 	&	1.3570E-02	&	1.69 	\\
		&$8\times8$	&	54	&	3.5871E-03	&	1.47 	&	5.7097E-02	&	1.01 	&	4.5478E-03	&	0.89 	&	3.3871E-03	&	2.00 	\\
		&$16\times16$	&	56	&	1.0446E-03	&	1.78 	&	2.7791E-02	&	1.04 	&	2.1731E-03	&	1.07 	&	1.0606E-03	&	1.68 	\\
		&$32\times32$	&	57	&	2.6797E-04	&	1.96 	&	1.3466E-02	&	1.05 	&	1.0725E-03	&	1.02 	&	2.9123E-04	&	1.86 	\\
		&$64\times64$	&	55	&	6.8216E-05	&	1.97 	&	6.7159E-03	&	1.00 	&	5.3538E-04	&	1.00 	&	7.5635E-05	&	1.95 	\\
		&$128\times128$	&	56	&	1.7062E-05	&	2.00 	&	3.3434E-03	&	1.01 	&	2.6710E-04	&	1.00 	&	1.9018E-05	&	1.99 	\\
		
		\Xhline{1pt}
		
		\multirow{7}{*}{2}
		&$2\times2$	&	57	&	1.1418E-02	&	-	&	1.1908E-01	&	-	&	1.0261E-02	&	-	&	2.1634E-02	&	-	\\
		&$4\times4$	&	118	&	1.6670E-03	&	2.78 	&	4.0647E-02	&	1.55 	&	3.4348E-03	&	1.58 	&	3.8990E-03	&	2.47 	\\
		&$8\times8$	&	140	&	1.8198E-04	&	3.20 	&	1.1997E-02	&	1.76 	&	9.4412E-04	&	1.86 	&	6.6742E-04	&	2.55 	\\
		&$16\times16$	&	143	&	2.1513E-05	&	3.08 	&	3.0604E-03	&	1.97 	&	2.5405E-04	&	1.89 	&	8.0739E-05	&	3.05 	\\
		&$32\times32$	&	147	&	2.4485E-06	&	3.14 	&	7.4485E-04	&	2.04 	&	6.4357E-05	&	1.98 	&	9.6052E-06	&	3.07 	\\
		&$64\times64$	&	141	&	3.0252E-07	&	3.02 	&	1.8351E-04	&	2.02 	&	1.6275E-05	&	1.98 	&	1.1686E-06	&	3.04 	\\
		&$128\times128$	&	144	&	3.7415E-08	&	3.02 	&	4.5194E-05	&	2.02 	&	4.0643E-06	&	2.00 	&	1.4270E-07	&	3.03 	\\
		
		\Xhline{1pt}
		\multirow{7}{*}{3}
		&$2\times2$	&	118	&	3.7853E-03	&	-	&	6.1640E-02	&	-	&	4.8347E-03	&	-	&	1.0738E-02	&	-	\\
		&$4\times4$	&	182	&	3.6357E-04	&	3.38 	&	1.2094E-02	&	2.35 	&	1.0255E-03	&	2.24 	&	9.8364E-04	&	3.45 	\\
		&$8\times8$	&	203	&	2.2050E-05	&	4.04 	&	1.6706E-03	&	2.86 	&	1.5662E-04	&	2.71 	&	6.7423E-05	&	3.87 	\\
		&$16\times16$	&	210	&	1.2673E-06	&	4.12 	&	2.2697E-04	&	2.88 	&	1.9755E-05	&	2.99 	&	4.6126E-06	&	3.87 	\\
		&$32\times32$	&	209	&	7.7548E-08	&	4.03 	&	2.7537E-05	&	3.04 	&	2.5016E-06	&	2.98 	&	2.8037E-07	&	4.04 	\\
		&$64\times64$	&	213	&	4.9043E-09	&	3.98 	&	3.4114E-06	&	3.01 	&	3.1685E-07	&	2.98 	&	1.7389E-08	&	4.01 	\\
		&$128\times128$	&	232	&	3.0577E-10	&	4.00 	&	4.2120E-07	&	3.02 	&	3.9582E-08	&	3.00 	&	1.0768E-09	&	4.01 	\\
		
		\Xhline{1pt}
\end{tabular}}
\caption{Results for $t=1$ on Quadrangle meshes}
\label{t=1:table2}
\end{table}

\begin{table}[H]\tiny
\centering
\resizebox{\textwidth}{!}{
	\begin{tabular}{c|c|c|c|c|c|c|c|c|c|c}
		\Xhline{1pt}
		\multirow{2}{*}{$k$}&\multirow{2}{*}{Mesh}&\multirow{2}{*}{Iter}&\multicolumn{2}{c|}{$\|\bm\theta-\bm\theta_h\|_{\mathcal{T}_h}$}&\multicolumn{2}{c|}{$t\|\bm\gamma-\bm\gamma_h\|_{\mathcal{T}_h}$}&\multicolumn{2}{c|}{$\|\bm\sigma-\bm\sigma_h\|_{\mathcal{T}_h}$}&\multicolumn{2}{c}{$\|\omega-\omega_h\|_{\mathcal{T}_h}$}\\
		\cline{4-11}
		& & &Error&Rate&Error&Rate&Error&Rate&Error&Rate\\
		\hline
		\multirow{7}{*}{1}
		&$2\times2$	&	31	&	1.7625E-02	&	-	&	1.1548E-02	&	-	&	1.1585E-02	&	-	&	3.5954E-03	&	-	\\
		&$4\times4$	&	38	&	9.0459E-03	&	0.96 	&	1.0046E-02	&	0.20 	&	8.3604E-03	&	0.47 	&	1.3964E-03	&	1.36 	\\
		&$8\times8$	&	43	&	3.0751E-03	&	1.56 	&	5.4947E-03	&	0.87 	&	4.4547E-03	&	0.91 	&	5.5968E-04	&	1.32 	\\
		&$16\times16$	&	51	&	8.9104E-04	&	1.79 	&	2.7645E-03	&	0.99 	&	2.1431E-03	&	1.06 	&	1.7253E-04	&	1.70 	\\
		&$32\times32$	&	56	&	2.4444E-04	&	1.87 	&	1.3459E-03	&	1.04 	&	1.0686E-03	&	1.00 	&	4.6105E-05	&	1.90 	\\
		&$64\times64$	&	58	&	6.5893E-05	&	1.89 	&	6.7150E-04	&	1.00 	&	5.3495E-04	&	1.00 	&	1.1980E-05	&	1.94 	\\
		&$128\times128$	&	59	&	1.6893E-05	&	1.96 	&	3.3431E-04	&	1.01 	&	2.6705E-04	&	1.00 	&	3.0295E-06	&	1.98 	\\
		
		\Xhline{1pt}
		
		\multirow{7}{*}{2}
		&$2\times2$	&	38	&	8.4971E-03	&	-	&	1.1876E-02	&	-	&	9.1063E-03	&	-	&	1.6458E-03	&	-	\\
		&$4\times4$	&	57	&	1.2955E-03	&	2.71 	&	3.7942E-03	&	1.65 	&	3.3597E-03	&	1.44 	&	5.4627E-04	&	1.59 	\\
		&$8\times8$	&	85	&	1.5445E-04	&	3.07 	&	1.1749E-03	&	1.69 	&	9.3936E-04	&	1.84 	&	1.1299E-04	&	2.27 	\\
		&$16\times16$	&	115	&	1.9857E-05	&	2.96 	&	3.0348E-04	&	1.95 	&	2.5358E-04	&	1.89 	&	1.4638E-05	&	2.95 	\\
		&$32\times32$	&	137	&	2.4087E-06	&	3.04 	&	7.4279E-05	&	2.03 	&	6.4328E-05	&	1.98 	&	1.9069E-06	&	2.94 	\\
		&$64\times64$	&	145	&	3.0112E-07	&	3.00 	&	1.8331E-05	&	2.02 	&	1.6273E-05	&	1.98 	&	2.4228E-07	&	2.98 	\\
		&$128\times128$	&	149	&	3.7369E-08	&	3.01 	&	4.5180E-06	&	2.02 	&	4.0641E-06	&	2.00 	&	3.0220E-08	&	3.00 	\\
		
		\Xhline{1pt}
		\multirow{7}{*}{3}
		&$2\times2$	&	57	&	3.4741E-03	&	-	&	6.0634E-03	&	-	&	4.6820E-03	&	-	&	1.7394E-03	&	-	\\
		&$4\times4$	&	84	&	2.7585E-04	&	3.65 	&	1.1686E-03	&	2.38 	&	1.0059E-03	&	2.22 	&	2.3709E-04	&	2.88 	\\
		&$8\times8$	&	126	&	2.0865E-05	&	3.72 	&	1.6266E-04	&	2.84 	&	1.5640E-04	&	2.69 	&	1.7804E-05	&	3.74 	\\
		&$16\times16$	&	169	&	1.2457E-06	&	4.07 	&	2.2568E-05	&	2.85 	&	1.9756E-05	&	2.98 	&	1.2253E-06	&	3.86 	\\
		&$32\times32$	&	198	&	7.7237E-08	&	4.01 	&	2.7541E-06	&	3.03 	&	2.5016E-06	&	2.98 	&	7.7321E-08	&	3.99 	\\
		&$64\times64$	&	216	&	4.8977E-09	&	3.98 	&	3.4121E-07	&	3.01 	&	3.1684E-07	&	2.98 	&	4.9332E-09	&	3.97 	\\
		&$128\times128$	&	232	&	3.0566E-10	&	4.00 	&	4.2122E-08	&	3.02 	&	3.9581E-08	&	3.00 	&	3.0675E-10	&	4.01 	\\
		
		\Xhline{1pt}
\end{tabular}}
\caption{Results for $t=0.1$ on Quadrangle meshes}
\label{t=0.1:table2}
\end{table}

\begin{table}[H]\tiny
\centering
\resizebox{\textwidth}{!}{
	\begin{tabular}{c|c|c|c|c|c|c|c|c|c|c}
		\Xhline{1pt}
		\multirow{2}{*}{$k$}&\multirow{2}{*}{Mesh}&\multirow{2}{*}{Iter}&\multicolumn{2}{c|}{$\|\bm\theta-\bm\theta_h\|_{\mathcal{T}_h}$}&\multicolumn{2}{c|}{$t\|\bm\gamma-\bm\gamma_h\|_{\mathcal{T}_h}$}&\multicolumn{2}{c|}{$\|\bm\sigma-\bm\sigma_h\|_{\mathcal{T}_h}$}&\multicolumn{2}{c}{$\|\omega-\omega_h\|_{\mathcal{T}_h}$}\\
		\cline{4-11}
		& & &Error&Rate&Error&Rate&Error&Rate&Error&Rate\\
		\hline
		\multirow{7}{*}{1}
		&$2\times2$	&	30	&	1.7645E-02	&	-	&	1.1541E-03	&	-	&	1.1583E-02	&	-	&	3.2521E-03	&	-	\\
		&$4\times4$	&	46	&	9.0791E-03	&	0.96 	&	9.7517E-04	&	0.24 	&	8.4549E-03	&	0.45 	&	1.3157E-03	&	1.31 	\\
		&$8\times8$	&	52	&	2.9914E-03	&	1.60 	&	5.5462E-04	&	0.81 	&	4.4634E-03	&	0.92 	&	5.4501E-04	&	1.27 	\\
		&$16\times16$	&	50	&	8.2085E-04	&	1.87 	&	2.9340E-04	&	0.92 	&	2.1414E-03	&	1.06 	&	1.6688E-04	&	1.71 	\\
		&$32\times32$	&	44	&	2.0982E-04	&	1.97 	&	1.5028E-04	&	0.97 	&	1.0665E-03	&	1.01 	&	4.3425E-05	&	1.94 	\\
		&$64\times64$	&	38	&	5.3644E-05	&	1.97 	&	7.5367E-05	&	1.00 	&	5.3397E-04	&	1.00 	&	1.1075E-05	&	1.97 	\\
		&$128\times128$	&	46	&	1.3817E-05	&	1.96 	&	3.6799E-05	&	1.03 	&	2.6662E-04	&	1.00 	&	2.8272E-06	&	1.97 	\\
		
		\Xhline{1pt}
		
		\multirow{8}{*}{2}
		&$2\times2$	&	42	&	8.4549E-03	&	-	&	1.1931E-03	&	-	&	9.0880E-03	&	-	&	1.5197E-03	&	-	\\
		&$4\times4$	&	61	&	1.3048E-03	&	2.70 	&	3.8651E-04	&	1.63 	&	3.3541E-03	&	1.44 	&	5.6406E-04	&	1.43 	\\
		&$8\times8$	&	65	&	1.5187E-04	&	3.10 	&	1.2475E-04	&	1.63 	&	9.3966E-04	&	1.84 	&	1.1453E-04	&	2.30 	\\
		&$16\times16$	&	61	&	1.9286E-05	&	2.98 	&	4.2464E-05	&	1.55 	&	2.5366E-04	&	1.89 	&	1.4774E-05	&	2.95 	\\
		&$32\times32$	&	60	&	2.3808E-06	&	3.02 	&	1.2671E-05	&	1.74 	&	6.4339E-05	&	1.98 	&	1.9185E-06	&	2.95 	\\
		&$64\times64$	&	74	&	2.9748E-07	&	3.00 	&	2.7996E-06	&	2.18 	&	1.6271E-05	&	1.98 	&	2.4330E-07	&	2.98 	\\
		&$128\times128$	&	105	&	3.7449E-08	&	2.99 	&	5.5725E-07	&	2.33 	&	4.0631E-06	&	2.00 	&	3.0343E-08	&	3.00 	\\

		\Xhline{1pt}
		\multirow{7}{*}{3}
		&$2\times2$	&	58	&	3.5164E-03	&	-	&	6.2234E-04	&	-	&	4.6620E-03	&	-	&	1.8221E-03	&	-	\\
		&$4\times4$	&	84	&	2.3951E-04	&	3.88 	&	1.2622E-04	&	2.30 	&	1.0011E-03	&	2.22 	&	2.4146E-04	&	2.92 	\\
		&$8\times8$	&	86	&	1.9040E-05	&	3.65 	&	2.6039E-05	&	2.28 	&	1.5597E-04	&	2.68 	&	1.8017E-05	&	3.74 	\\
		&$16\times16$	&	76	&	1.2191E-06	&	3.97 	&	5.5609E-06	&	2.23 	&	1.9790E-05	&	2.98 	&	1.2398E-06	&	3.86 	\\
		&$32\times32$	&	85	&	7.7220E-08	&	3.98 	&	8.7157E-07	&	2.67 	&	2.5078E-06	&	2.98 	&	7.8083E-08	&	3.99 	\\
		&$64\times64$	&	124	&	4.8841E-09	&	3.98 	&	9.0071E-08	&	3.27 	&	3.1722E-07	&	2.98 	&	4.9789E-09	&	3.97 	\\
		&$128\times128$	&	193	&	3.0440E-10	&	4.00 	&	7.5936E-09	&	3.57 	&	3.9596E-08	&	3.00 	&	3.0955E-10	&	4.01 	\\
		
		\Xhline{1pt}
\end{tabular}}
\caption{Results for $t=0.01$ on Quadrangle meshes}
\label{t=0.01:table2}
\end{table}

\subsection{Contrast of Iterations}
We next consider the convergence rate of solving the discrete system of our method and comparing it with the method presented in \cite{Chen1}, called ``Old Method'' in \Cref{contrast1,contrast2}. The system would typically be solved iteratively in practice so it is of interest to examine the condition number and it is well known that the convergence can be slow if the condition number is large. \par
Let us briefly analyze the result displayed in \Cref{contrast1,contrast2}. We let the stopping criterion for iteration be same for two methods taking $1.0E-10$ and solve the problem introduced in \Cref{analytical solution}. As the mesh refinement doubling the number of line elements, the iteration of the old methods approximately increases twice correspondingly. It's simply shows that the iterations increase as the $h$ decreases. However the iterations of our method increase slower than the old method. This shows that our new mehtod may have a good condition number compared to the old method.
\begin{table}[H] \tiny
\centering
\resizebox{0.8\textwidth}{!}{
	\begin{tabular}{c|c|c|c|c|c|c|c}
		\Xhline{1pt}
		$k$	&	\diagbox{Method}{Mesh}			&	$2\times2$	&	$4\times4$	&	$8\times8$	&	$16\times16$	&	$32\times32$	&	$64\times64$	\\
		\hline
		\multirow{2}{*}{1}	&	Old method			&	8	&	34	&	155	&	706	&	2758	&	10800	\\
		&	New method			&	4	&	11	&	19	&	19	&	21	&	23	\\
		\hline
		\multirow{2}{*}{2}	&	Old method			&	17	&	77	&	332	&	1245	&	4822	&	18740	\\
		&	New method			&	6	&	21	&	34	&	37	&	50	&	56	\\
		\hline
		\multirow{2}{*}{3}	&	Old method			&	23	&	106	&	436	&	1686	&	7723	&	22172	\\
		&	New method			&	9	&	31	&	45	&	60	&	83	&	99	\\
		\Xhline{1pt}
\end{tabular}}
\caption{Contrast of iteration for $t=1.0E-10$}
\label{contrast1}
\end{table}
\begin{table}[H]\tiny
\centering
\resizebox{0.8\textwidth}{!}{
	\begin{tabular}{c|c|c|c|c|c|c|c}
		\Xhline{1pt}
		$k$	&	\diagbox{Method}{Mesh}			&	$2\times2$	&	$4\times4$	&	$8\times8$	&	$16\times16$	&	$32\times32$	&	$64\times64$	\\
		\hline																	
		\multirow{2}{*}{1}	&	Old method			&	8	&	27	&	77	&	138	&	251	&	480	\\
		&	New method			&	4	&	17	&	28	&	40	&	46	&	53	\\
		\hline																	
		\multirow{2}{*}{2}	&	Old method			&	12	&	207	&	93	&	175	&	326	&	618	\\
		&	New method			&	15	&	35	&	52	&	55	&	62	&	68	\\
		\hline																	
		\multirow{2}{*}{3}	&	Old method			&	125	&	172	&	100	&	184	&	336	&	620	\\
		&	New method			&	26	&	52	&	68	&	96	&	119	&	136	\\
		\Xhline{1pt}
\end{tabular}}
\caption{Contrast of iteration for $t=1$}
\label{contrast2}
\end{table}

%
%
%
%
\section*{Acknowledgments}
We would like to acknowledge the assistance of volunteers in putting
together this example manuscript and supplement.

\end{document}